\documentclass[12pt]{article}
\usepackage[paperwidth=210mm, paperheight=297mm,bindingoffset=0mm]{geometry}
\usepackage[T1]{fontenc}
\usepackage{setspace}
\usepackage{tgtermes}
\usepackage{bbm}
\usepackage{mathbbol}
\usepackage[]{enumitem}
\setlist[enumerate,1]{itemsep=0pt, parsep=0pt, listparindent=\parindent}
\setlist[enumerate,2]{ref=\theenumii, itemsep=0pt, parsep=0pt, listparindent=\parindent}
\setlist[itemize,1]{itemsep=0pt, parsep=0pt, listparindent=\parindent}
\setlist[itemize,2]{itemsep=0pt, parsep=0pt, listparindent=\parindent}
\usepackage{titling}
\usepackage{hyperref}
\usepackage{mathtools}
\usepackage[backend=biber,
style=numeric,
sorting=ynt
]{biblatex}
\addbibresource{Degrees_of_Small_Extensions.bib}
\usepackage[nottoc,numbib]{tocbibind}
\usepackage{listings}

\usepackage{xcolor}
\usepackage[bottom]{footmisc} 
\usepackage{amssymb}
\usepackage{amsthm}
\usepackage[protrusion=true,expansion=true]{microtype}
\usepackage{blindtext}
\usepackage{etoolbox}
\usepackage{graphicx}              
\usepackage{amsmath}               
\usepackage{amsfonts}              
\usepackage{amsthm}                
\usepackage{hyperref}
\hypersetup{citecolor=red}
\usepackage{cleveref}
\numberwithin{equation}{section}
\usepackage{caption}
\usepackage{float}

\hypersetup{
colorlinks=true,
linkcolor=blue
}

\setlength{\droptitle}{-65pt}

\title{Forcing as a Local Method of Accessing Small Extensions}
\author{Desmond Lau}

\begin{document}

\maketitle

\begin{abstract}
Fix a set-theoretic universe $V$. We look at small extensions of $V$ as generalised degrees of computability over $V$. We also formalise and investigate the complexity of certain methods one can use to define, in $V$, subclasses of degrees over $V$. Finally, we give a nice characterisation of the complexity of forcing within this framework.
\end{abstract}

\newtheorem{thm}{Theorem}[section]
\newtheorem{innercustomlem}{Lemma}
\newenvironment{customlem}[1]
  {\renewcommand\theinnercustomlem{#1}\innercustomlem}
  {\endinnercustomlem}
\newtheorem{innercustomdef}{Definition}
\newenvironment{customdef}[1]
  {\renewcommand\theinnercustomdef{#1}\innercustomdef}
  {\endinnercustomdef}
\newtheorem{lem}[thm]{Lemma}
\newtheorem{prop}[thm]{Proposition}
\newtheorem{cor}[thm]{Corollary}
\newtheorem{conj}[thm]{Conjecture}
\newtheorem{ques}[thm]{Question}
\newtheorem*{claim}{Claim}
\theoremstyle{definition}
\newtheorem{defi}[thm]{Definition}
\theoremstyle{remark}
\newtheorem*{rem*}{Remark}
\newtheorem{rem}[thm]{Remark}
\newtheorem{ex}[thm]{Example}
\newtheorem{ob}[thm]{Observation}
\newtheorem{fact}[thm]{Fact}
\newtheorem{con}[thm]{Convention}
\newtheorem{diff}[thm]{Difficulty}

\newcommand{\bd}[1]{\mathbf{#1}}  
\newcommand{\RR}{\mathbb{R}}      
\newcommand{\ZZ}{\mathbb{Z}}      
\newcommand{\col}[1]{\left[\begin{matrix} #1 \end{matrix} \right]}
\newcommand{\comb}[2]{\binom{#1^2 + #2^2}{#1+#2}}
\newcommand{\eq}{=}

\newcommand{\blankpage}{
\newpage
\thispagestyle{empty}
\mbox{}
\newpage
}

{\let\clearpage\relax \tableofcontents} 
\thispagestyle{empty}

\section{Introduction}\label{sect1}

According to \cite{rasm}, there is an intuitive notion of generalised computation, definable within the set-theoretic universe $V$, that canonically partitions sets into their degrees of constructibility. This lends credence to the belief that generative power over a model of set theory is a surrogate for computational power. When dealing with degrees of constructibility, the relevant model of set theory is G\"{o}del's constructible universe $L$, an inner model of $V$. Switching out $L$ for larger inner models makes sense for coarser degree structures.

What if we swop $L$ for $V$ itself? Doing so will obviously result in degrees that are not subclasses of $V$. What then do they comprise? With meta-theoretic assumptions mildly stronger than $\mathsf{ZFC}$, we can view $V$ as a countable transitive model of $\mathsf{ZFC}$, from which such degrees can be naturally defined as degrees of small extensions. Vaguely, each degree of small extensions is associated with (or rather, represented by) an outer model $W$ of $V$ generated by a set in $W$ over $V$: here $W$ is called a small extension of $V$. These degrees, together with the theory of their ordering, seek to capture the spirit of higher-order computations relative to $V$, the way higher recursion theory do for computations on sets beyond the domain of classical recursion theory. Figure \ref{analogy} illustrates this parallel.

\begin{figure}[!ht]
    \centering
    \includegraphics[width=\textwidth]{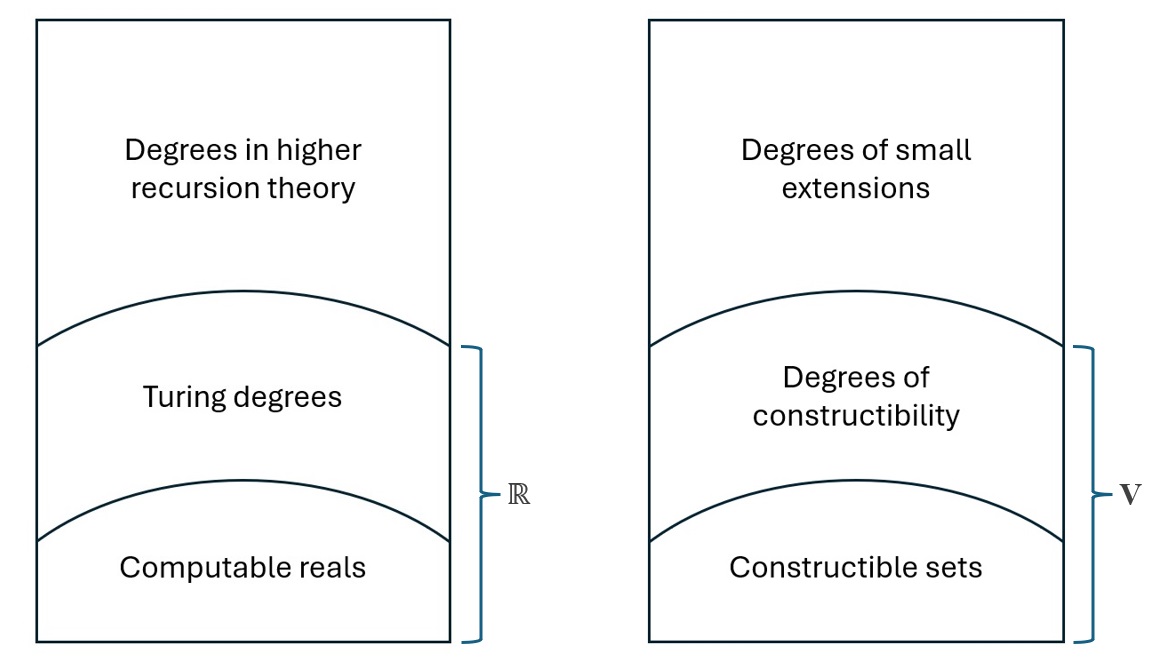}
    \captionsetup{width=0.8\textwidth, format=hang}
    \caption{comparison between conventional notions of relative computability (left) and our generalised notions (right).}
    \label{analogy}
\end{figure}

Next, we wish to examine (necessarily non-constructive) methods of definably ``accessing'' small extensions of $V$ within $V$, or local methods in short. Set forcing is one such method, and a very well-studied one at that. In an application of set forcing, we pick a partially ordered set --- also known as a forcing notion --- $\mathbb{P} \in V$, and use a filter meeting all dense subsets of $\mathbb{P}$ in $V$ --- termed a $\mathbb{P}$-generic filter over $V$ --- to generate an extension of $V$. So the small extensions of $V$ set forcing brings about via $\mathbb{P}$ are precisely those in
\begin{equation*}
    \{V[g] : g \text{ is a } \mathbb{P} \text{-generic filter over } V\} \text{,}
\end{equation*}
a set definable outside $V$ with $\mathbb{P}$ and its dense subsets in $V$ as parameters. Consequently, one can view set forcing as a recipe in $V$ for generating small extensions of $V$ based only on parameters in $V$. The formal treatment of set forcing inspires a list of desiderata for a local method:
\begin{enumerate}[leftmargin=40pt, label=(DA\arabic*)]
    \item\label{da1} it should be definable in $V$,
    \item\label{da2} it should map parameters to descriptions of how those parameters are used to define generators of small extensions, and
    \item\label{da3} the generators it produces should depend locally on the parameters used to define them. 
\end{enumerate}

A convenient realisation at this juncture is that a recipe and its parameters (or equivalently, the two components of \ref{da2}) can be bundled up into a theory with constraints in interpretation (TCI). TCIs are basically first-order theories endowed with set constraints that may not be first-order expressible. Like standard first-order theories, TCIs admit models, and whether a set $X$ is a model of a TCI $\mathfrak{T}$ depends locally on $X$ and $\mathfrak{T}$. Defining a local method through the language of TCIs and their models thus provides immediate guarantee of \ref{da3}, and is appealing in both its brevity and robustness.

Accompanying the formalisation of local methods, ought to be a notion of relative complexity, a measure which can be utilised to check if one local method is ``more complex'' than another. Akin to relative computability, we want to define relative complexity as a transitive binary relation on the class of all local methods. There is actually a straightforward way to do this: we say method $Y$ is more complex than method $X$ iff the small extensions of $V$ picked out by $Y$ are a non-trivial refinement of those picked out by $X$. Connecting the first-order portion of a TCI with the relative complexity relation we defined, leads to the formulation of a complexity hierarchy --- the local method hierarchy --- very much in line with more notable hierarchies in theoretical computer science (e.g. the arithmetical and polynomial hierarchies).

Leveraging on a novel forcing framework developed in \cite{myself}, we are able to show that the method of set forcing is exactly $\mathsf{\Sigma_1}$ (or equivalently, as we shall see, $\mathsf{\Pi_2}$) in the local method hierarchy. This is the main takeaway of our work presented here. We follow it up with the analysis of certain witnesses to set forcing being more complex than $\mathsf{\Pi_2}$.

By applying an analogue of the Cantor-Bendixson derivative on a specific class of forcing notions, we prove that every TCI $\mathfrak{T} \in \mathsf{\Pi_2}$ either singles out $V$ or picks out continuum-many (as evaluated in the meta-theory) small extensions of $V$. The same is long known to be true for forcing notions: a trivial forcing notion gives $V$ as its sole generic extension, whereas a non-trivial one generates continuum-many generic extensions.

One can think of this work as a rigorous foundation for some of the main ideas found in Section 5 of \cite{myself}. In fact, consolidated here are many results in said section, having been weaved together into a more philosophically compelling and coherent package. For self-containment, we reproduce the more concise proofs from \cite{myself}, of as many of these results as possible.  

\section{Degrees of Small Extensions}\label{sect3}

Extending a structure through means of adjoining ``new'' objects is commonly seen in mathematics. Here, ``new'' just means ``existing outside of the structure in question''. For example, a standard course in algebra would talk about field extensions the likes of $\mathbb{Q}[\sqrt{2}]$. In set theory, the subjects of study are models of set theory, often models of $\mathsf{ZFC}$. For convenience, we usually assume such models are countable and transitive. If a \emph{countable transitive model of} $\mathsf{ZFC}$ (henceforth, \emph{CTM}) exists, then extensions of it exist, but due to the complicated closure properties required of a model of $\mathsf{ZFC}$, the proof of their existence is much hairier than that of field extensions. 

It turns out that, whenever $U$ is a CTM and $W$ is an extension of $U$, we can always find an extension of $W$ that is generated over $U$ by a ``small set''. Methods of generating such ``small extensions'' include, but are not limited to, set forcing. In this section, we compare the multiverse of small extensions with the \emph{generic multiverse} born from set forcing, under the assumption that both multiverses have the same centre. We also introduce the idea of \emph{theories with constraints in interpretation} (\emph{TCIs}) to set things up for the next section. 

\subsection{Small Extensions as Degrees}

\begin{rem}
We avoid the usual meta-theoretic concerns regarding forcing and the set-theoretic multiverse by working in the theory 
\begin{equation*}
    \mathsf{ZFC} \ + \text{ ``there is a transitive model of } \mathsf{ZFC}\text{''.}
\end{equation*} 
The existence of CTMs can be proven in this theory.
\end{rem}

\begin{defi}
Given $U$ and $W$, we say $U$ is an \emph{inner model} of $W$ (or equivalently, $W$ is a \emph{outer model} of $U$) iff
\begin{itemize}
    \item $U$ and $W$ are CTMs,
    \item $U \subset W$, and
    \item $ORD^U = ORD^W$.
\end{itemize}
\end{defi}

\begin{defi}
Let $W$ be an outer model of $U$. Then $W$ is a \emph{small extension of} $U$ iff for some $x \in W$, $W$ is the smallest CTM $W'$ satisfying
\begin{itemize}
    \item $U \subset W'$, and
    \item $x \in W'$.
\end{itemize}
In this case, we say, equivalently,
\begin{itemize}
    \item $x$ \emph{generates} $W$ \emph{from} $U$, or
    \item $W$ \emph{is a small extension of} $U$ \emph{generated by} $x$, or
    \item $W = U[x]$.
\end{itemize}
\end{defi}

The following observation is trivial.

\begin{ob}
The binary relations 
\begin{itemize}
    \item ``being an outer model of'', and
    \item ``being a small extension of''
\end{itemize}
are both transitive.
\end{ob}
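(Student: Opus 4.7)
The plan is to handle the two relations separately, unpacking each definition into a short checking argument.

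For ``being an outer model of'', I would simply chain the three defining clauses. Given CTMs $M_0 \subset M_1 \subset M_2$ with $M_1$ outer over $M_0$ and $M_2$ outer over $M_1$, transitivity of $\subset$ gives $M_0 \subset M_2$, and $ORD^{M_0} = ORD^{M_1} = ORD^{M_2}$ gives the ordinal-height condition. Nothing deeper than transitivity of $\subset$ and $=$ is needed.

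For ``being a small extension of'', suppose $M_1 = M_0[y]$ and $M_2 = M_1[x]$. The plan is to exhibit a single $z \in M_2$ generating $M_2$ over $M_0$; the natural choice is a code for the pair $(x, y)$, say the Kuratowski pair $z = \langle x, y\rangle$, which lies in $M_2$ since $x, y \in M_2$ and $M_2$ is closed under pairing. I would then verify: (i) $M_2$ itself is a CTM containing $M_0 \cup \{z\}$, and (ii) any CTM $M'$ containing $M_0 \cup \{z\}$ must contain $M_2$. Claim (ii) is the main content, proved by a two-stage minimality argument: decoding $z$ inside $M'$ yields $y \in M'$, so by minimality of $M_1 = M_0[y]$ we obtain $M_1 \subset M'$; then $x \in M'$ as well, so by minimality of $M_2 = M_1[x]$ we obtain $M_2 \subset M'$. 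Combining (i) and (ii) gives $M_2 = M_0[z]$, witnessing that $M_2$ is a small extension of $M_0$.

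The only (mild) subtlety is ensuring the pair can be decoded inside $M'$, but this is automatic since $M'$ is a transitive model of enough set theory for Kuratowski decoding to be absolute. I do not anticipate any further obstacle beyond the bookkeeping of the double-minimality invocation.
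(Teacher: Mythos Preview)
Your argument is correct. The paper does not prove this observation at all; it simply declares it ``trivial'' and moves on. Your write-up supplies exactly the routine verification the paper omits, and the two-stage minimality argument for the small-extension case via $z=\langle x,y\rangle$ is the natural one. One tiny addition you might make explicit: before checking minimality, note that $M_2$ is an outer model of $M_0$ by the first part of the observation, since the paper's definition of ``small extension of $U$'' presupposes being an outer model of $U$.
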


We know that certain sets in outer models can always be used to generate small extensions.

\begin{fact}\label{fact34}
Let $W$ be an outer model of $U$, and $x \in \mathcal{P}(y) \cap W$ for some $y \in U$. Then there is a smallest CTM $W'$ satisfying
\begin{itemize}
    \item $U \subset W'$, and
    \item $x \in W'$.
\end{itemize}
In other words, $U[x]$ exists.
\end{fact}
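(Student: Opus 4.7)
The plan is to construct $U[x]$ inside $W$ as a relative constructibility hierarchy built over the transitive set $U \cup \{x\}$ and truncated to the common ordinal height of $U$ and $W$. Observe first that, since $U$ is transitive and $x \subseteq y \in U$, every element of $x$ lies in $U$; hence $M_0 := U \cup \{x\}$ is transitive. Let $\kappa := ORD^U = ORD^W$, which is well-defined because $W$ is an outer model of $U$. Working inside $W$, define recursively, for each $\alpha < \kappa$,
\begin{equation*}
    M_{\alpha+1} := M_\alpha \cup \mathrm{Def}(M_\alpha, \in), \qquad M_\lambda := \bigcup_{\beta < \lambda} M_\beta \text{ at limits,}
\end{equation*}
where $\mathrm{Def}(M_\alpha, \in)$ denotes the set of subsets of $M_\alpha$ first-order definable in the structure $(M_\alpha, \in)$ with parameters from $M_\alpha$. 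The candidate for $U[x]$ is $W' := \bigcup_{\alpha < \kappa} M_\alpha$.

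Next, I would verify that each $M_\alpha$, and hence $W'$, is transitive; that $U \subseteq W'$ and $x \in W'$ (both immediate from $M_0 \subseteq W'$); and that $ORD^{W'} = \kappa$. For the minimality assertion, I would note that the recursion defining $(M_\alpha)$ is absolute between any two transitive models containing $U \cup \{x\}$ and satisfying a sufficient fragment of $\mathsf{ZFC}$. Thus, if $W''$ is any CTM with $U \subseteq W''$ and $x \in W''$, then $W''$ internally reconstructs the same sequence; an induction on $\alpha < \kappa$ combined with the transitivity of $W''$ then yields $M_\alpha \subseteq W''$ for every $\alpha < \kappa$, whence $W' \subseteq W''$.

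The main obstacle will be verifying that $W' \models \mathsf{ZFC}$. This is essentially the standard theorem that relative constructibility hierarchies of type $L[A]$ produce models of $\mathsf{ZFC}$, adapted to our truncation at $\kappa$. Extensionality, Foundation, Pairing, Union, Infinity, and Separation drop out quickly from transitivity and the definable closure step. Replacement, Power Set, and Choice are more delicate; they follow from (a) reflection inside $W$ applied to the definable subclass $W'$, which bounds the ranks of Replacement witnesses below $\kappa$, and (b) the condensation-plus-coding argument familiar from the $L[A]$ setting, which produces power sets and well-orderings at a bounded stage of the hierarchy. The fact that $\kappa$ is itself the height of the full CTM $U$ of $\mathsf{ZFC}$ is crucial for the Replacement argument. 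I would either reproduce this adaptation in detail or, more economically, cite the corresponding treatment in Jech or Kunen.
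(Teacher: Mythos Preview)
The paper records this as a Fact without proof, so there is nothing in the paper to compare your argument against. Your proposal, however, has a genuine error at the very first step.

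Taking $M_0 = U \cup \{x\}$ is fatal. Since $M_0$ is transitive and its ordinals are exactly those of $U$, the $\Delta_0$-definable set $\{z \in M_0 : z \text{ is an ordinal}\}$ is $\kappa$ itself. Hence $\kappa \in \mathrm{Def}(M_0,\in) \subseteq M_1$, and your $W'$ acquires ordinal height strictly above $\kappa$. This simultaneously contradicts your claim that $ORD^{W'}=\kappa$ and destroys minimality: $W$ is already a CTM with $U \subset W$ and $x \in W$, yet $\kappa \notin W$, so $W' \not\subseteq W$. Your absoluteness argument for minimality fails independently for the same underlying reason --- $U$ is in general a proper class in any such $W''$, so $W''$ cannot form $M_0$ as a set, and in any case $M_1 \not\subseteq W''$ since $\kappa \in M_1 \setminus W''$.

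The usual remedy is to feed $U$ into the hierarchy stage by stage rather than all at once. One option: set $N_0 = \mathrm{trcl}(\{x\})$, let $N_{\alpha+1} = \mathrm{Def}(N_\alpha \cup V_\alpha^{\,U};\in)$, take unions at limits, and put $W' = \bigcup_{\alpha<\kappa} N_\alpha$. Now each $N_\alpha$ is an honest set in any CTM $W''\supseteq U$ with $x\in W''$, because $\langle V_\beta^{\,U} : \beta < \alpha\rangle \in U \subseteq W''$ and the construction of $N_\alpha$ from $x$ and this sequence is absolute; your induction for minimality then goes through. Each $\mathrm{Def}$ step adds exactly one new ordinal and $V_\alpha^{\,U}$ contributes only ordinals below $\alpha$, so (using that $\kappa$, as the height of a $\mathsf{ZFC}$ model, is closed under ordinal addition) one gets $ORD\cap N_\alpha < \kappa$ for all $\alpha<\kappa$ and hence $ORD^{W'}=\kappa$. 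The $\mathsf{ZFC}$ verification you outline then applies to this stratified hierarchy rather than to your original one.
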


There is a simple and useful characterisation of small extensions of a CTM.

\begin{prop}\label{prop22}
Let $M$ be a transitive model of $\mathsf{ZFC}$ and $X \in M$. Then there is a set of ordinals $c \in M$ such that if $N$ is any transitive model of $\mathsf{ZFC}$ containing $c$, then $X \in N$. 
\end{prop}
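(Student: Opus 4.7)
The plan is to encode the entire $\in$-structure of $\mathrm{trcl}(\{X\})$ as a set of ordinals lying in $M$, and then use the absoluteness of the Mostowski collapse to argue that any transitive $\mathsf{ZFC}$-model containing this code can uniquely recover $X$.

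First I would work entirely inside $M$. Set $T = \mathrm{trcl}(\{X\})$, which exists in $M$ by the axioms of $\mathsf{ZFC}$, and note that $X \in T$. Using choice inside $M$, pick a bijection $f : \alpha \to T$ for some ordinal $\alpha \in M$, and let $\beta = f^{-1}(X) < \alpha$. Push the membership relation across $f$ to obtain
\[
    E \;=\; \{(i,j) \in \alpha \times \alpha : f(i) \in f(j)\},
\]
a well-founded extensional relation on $\alpha$ that is isomorphic to $(T, \in)$ via $f$. Now fix any absolute encoding (say, via G\"odel pairing on ordinals, together with a few tag ordinals to separate the components) that packages the triple $(\alpha, \beta, E)$ into a single set of ordinals $c \in M$.

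Next I would verify that $c$ has the required property. Let $N$ be any transitive model of $\mathsf{ZFC}$ with $c \in N$. Because the decoding scheme is absolute and $c \in N$, the ordinal $\alpha$, the ordinal $\beta < \alpha$, and the relation $E \subseteq \alpha \times \alpha$ are all recovered as elements of $N$. Applying Mostowski's collapsing lemma inside $N$ to the well-founded extensional structure $(\alpha, E)$ yields a transitive set $T^{*} \in N$ together with the collapse isomorphism $\pi : (\alpha, E) \to (T^{*}, \in)$; in particular $\pi(\beta) \in N$.

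The crux of the argument, and what I expect to be the only delicate point, is that the Mostowski collapse is absolute between transitive models of (a fragment of) $\mathsf{ZF}$, so the $T^{*}$ and $\pi$ computed in $N$ agree with those computed externally. Back in $V$, an easy $\in$-induction shows that $\pi(i) = f(i)$ for every $i < \alpha$: using transitivity of $T$ and surjectivity of $f$ we get $\pi(i) = \{\pi(j) : j\, E\, i\} = \{f(j) : f(j) \in f(i)\} = f(i)$. Hence $T^{*} = T$ and $\pi(\beta) = f(\beta) = X$, so $X \in N$ as required. The main obstacle is thus conceptually minor --- choosing a tidy absolute encoding and invoking the standard absoluteness of the transitive collapse --- rather than any substantive set-theoretic difficulty.
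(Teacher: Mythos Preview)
Your proof is correct and follows essentially the same approach as the paper: both take the transitive closure of $\{X\}$, pull the $\in$-relation back along a bijection with an ordinal, code the resulting relation as a set of ordinals via G\"odel pairing, and recover $X$ in any transitive $N$ by the absoluteness of the Mostowski collapse. The only cosmetic difference is that the paper identifies $X$ as the unique $\in$-maximal element of the collapsed set, whereas you record the index $\beta = f^{-1}(X)$ explicitly in the code; both devices work equally well.
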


\begin{proof}
Let $Y'$ be the transitive closure of $X$ (under the membership relation $\in$) and set $Y := Y' \cup \{X\}$. Then $Y$ is $\in$-transitive. Choose a bijection $f$ from a cardinal $\kappa$ into $Y$. Use $\in'$ to denote the unique binary relation $R$ on $\kappa$ such that
\begin{equation*}
    R(\alpha, \beta) \iff f(\alpha) \in f(\beta) \text{.}
\end{equation*}
Now apply G\"{o}del's pairing function to code $\in'$ as a (necessarily unbounded) subset $c$ of $\kappa$. 

To recover $X$ from $c$, first apply the inverse of the pairing function followed by the Mostowski collapse to get $Y$. Then $X$ is definable from $Y$ as the unique $\in$-maximal element of $Y$. This decoding process is absolute for transitive models of $\mathsf{ZFC}$ because all its components are.
\end{proof}

\begin{prop}\label{prop35}
If $W$ is a small extension of $U$, then for some ordinal $\kappa \in U$, $W$ is generated from $U$ by an unbounded subset of $\kappa$. Furthermore, we can choose $\kappa$ such that
\begin{equation*}
    (U; \in) \models \text{``} \kappa \text{ is a cardinal''.}
\end{equation*}
\end{prop}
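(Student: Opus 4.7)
My plan is to apply Proposition \ref{prop22} inside $W$ to the generator $x$. By hypothesis, $W = U[x]$ for some $x \in W$; invoking Proposition \ref{prop22} with $M := W$ and $X := x$ yields an ordinal $\kappa$, which is a cardinal in $W$, together with a (necessarily unbounded) subset $c \subseteq \kappa$ lying in $W$, such that every transitive model of $\mathsf{ZFC}$ containing $c$ also contains $x$.

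Next, I would verify that $U[c] = U[x]$. Since $c \subseteq \kappa$ with $\kappa \in U$, Fact \ref{fact34} guarantees that $U[c]$ exists; as a CTM containing $c$, it meets the hypothesis of the defining property of $c$, so $x \in U[c]$, giving $U[x] \subseteq U[c]$. Conversely, $c \in W = U[x]$, so by minimality $U[c] \subseteq U[x]$. Hence $W = U[c]$ is generated from $U$ by an unbounded subset of the ordinal $\kappa$.

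It remains to confirm that $\kappa$ lies in $U$ and is a cardinal there. Since $\kappa$ is an ordinal in $W$ and $ORD^U = ORD^W$ by the definition of outer model, we have $\kappa \in U$. If $\kappa$ were not a cardinal in $U$, a witnessing bijection from a smaller ordinal onto $\kappa$ would lie in $U \subseteq W$, contradicting $\kappa$'s being a cardinal in $W$. The whole argument is essentially a direct reduction to Proposition \ref{prop22}; I do not anticipate any serious obstacle, only the small bookkeeping step of checking that parameters produced inside $W$ sit where we need them in $U$.
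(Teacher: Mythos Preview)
Your proposal is correct and follows essentially the same route as the paper's proof: apply (the proof of) Proposition~\ref{prop22} inside $W$ to obtain $\kappa$ and $c$, use Fact~\ref{fact34} to form $U[c]$, show $U[c] = U[x]$ by double inclusion, and observe that a $W$-cardinal is automatically a $U$-cardinal since $U \subset W$. The only cosmetic point is that the cardinality of $\kappa$ and the unboundedness of $c$ come from the \emph{proof} of Proposition~\ref{prop22} rather than its statement, so you should cite the proof explicitly, as the paper does.
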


\begin{proof}
Let $x \in W$ be such that $W = U[x]$. By the proof of Proposition \ref{prop22}, for some cardinal $\kappa \in W$ there is an unbounded subset $c$ of $\kappa$ in $W$ coding $x$. Since $U$ is an inner model of $W$, $\kappa$ is also a cardinal in $U$. By Fact \ref{fact34}, $U[c]$ exists. Now, $U[c] \subset W$ as $c \in W$; but also $W \subset U[c]$ because $c$ can be decoded in $U[c]$ to give $x$ and $W = U[x]$.
\end{proof}

\begin{defi}
Let $U$ be a CTM. The \emph{outward multiverse centred at} $U$ is the set
\begin{equation*}
    \mathbf{M}(U) := \{W : W \text{ is an outer model of } U\} \text{.}
\end{equation*}
The \emph{small outward multiverse centred at} $U$ is the set
\begin{equation*}
    \mathbf{M}_{S}(U) := \{W : W \text{ is a small extension of } U\} \text{.}
\end{equation*}
\end{defi}

Clearly, $\mathbf{M}_{S}(U) \subset \mathbf{M}(U)$. By Jensen's remarkable result on ``coding the universe'' into a real, $\mathbf{M}_{S}(U)$ is not that much smaller than $\mathbf{M}(U)$.

\begin{fact}[Jensen, \cite{jensencoding}]\label{fact37}
Every CTM has an outer model satisfying 
\begin{equation*}
    \text{``} V = L[r] \text{ for some } r \subset \omega \text{''.}
\end{equation*}
\end{fact}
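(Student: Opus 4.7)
The result is Jensen's coding theorem, whose proof is genuinely intricate and well beyond a casual sketch; still, the overall strategy is clear. The plan is to construct, internally in the given CTM $U$, a class-sized forcing notion $\mathbb{P}$ whose generic extension $U[G]$ has the form $U[r]$ for a single real $r$ with $U[G] \models \text{``}V = L[r]\text{''}$. Since $U$ is countable in the meta-theory, a $\mathbb{P}$-generic filter over $U$ exists by the usual diagonal enumeration of dense classes, so the only non-trivial content lies inside the design of $\mathbb{P}$.

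The workhorse is almost disjoint coding: to code a subset $A \subset \kappa^+$ into a subset of $\kappa$, one fixes, canonically in $L[A]$ via fine structure, an almost disjoint family $\langle a_\alpha : \alpha < \kappa^+ \rangle$ of unbounded subsets of $\kappa$ and then forces a new $B \subset \kappa$ for which the relation ``$|B \cap a_\alpha| < \kappa$'' decides membership of $\alpha$ in $A$. Iterating this downward through the cardinals of $U$ should, in the limit, compress all the information of $U$ into a single real $r$; and by design of the coding apparatus, the only sets in the extension are those constructible from $r$, giving $V = L[r]$.

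The main obstacle — and the content of Jensen's original argument — is verifying that the iterated class forcing is tame: that it preserves $\mathsf{ZFC}$, cardinals, and cofinalities, and that at each stage the coding apparatus is sufficiently distributive and absolute. This is where the fine structure of $L[A]$-like inner models is indispensable, and where the delicate combinatorics (morasses, $\square$-like sequences, the fine-structural hierarchy) enter in force. Since the statement is invoked as an external fact of the literature, the appropriate move in this paper is simply to cite \cite{jensencoding} and proceed; any self-contained proof would dwarf the rest of the section.
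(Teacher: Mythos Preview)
Your proposal is appropriate: the paper does not prove this statement at all --- it is recorded as a \emph{Fact} attributed to Jensen and cited to \cite{jensencoding}, with no accompanying argument. You correctly anticipate this, and your sketch of the coding-by-class-forcing strategy (almost disjoint coding iterated down the cardinals, with fine-structural control of the iteration) is an accurate high-level summary of Jensen's method, even if the details are, as you say, well beyond what the paper needs.
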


\begin{prop}
Given a CTM $U$, $(\mathbf{M}_{S}(U), \subset)$ is a cofinal subposet of $(\mathbf{M}(U), \subset)$.
\end{prop}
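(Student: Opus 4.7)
The plan is to verify the two conditions for cofinality. Containment $\mathbf{M}_S(U) \subset \mathbf{M}(U)$ is immediate from the definitions, so the real task is to show that for every $W \in \mathbf{M}(U)$ there exists $W' \in \mathbf{M}_S(U)$ with $W \subset W'$. The engine for this will be Jensen's coding theorem (Fact \ref{fact37}) applied not to $U$ itself but to the potentially large outer model $W$.

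First I would fix an arbitrary $W \in \mathbf{M}(U)$ and invoke Fact \ref{fact37} on $W$ to obtain an outer model $W^* \supseteq W$ satisfying ``$V = L[r]$'' for some $r \subset \omega$ in $W^*$. Since ``being an outer model of'' is transitive, $W^*$ is an outer model of $U$; in particular $ORD^U = ORD^{W^*}$ and $\omega^U = \omega^{W^*} = \omega$, so $r$ is a subset of an ordinal of $U$. By Fact \ref{fact34}, $U[r]$ exists; I claim $W^* = U[r]$, which will immediately place $W \subset W^* \in \mathbf{M}_S(U)$, completing the proof.

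For the claim, one inclusion is trivial: $U[r]$ is the smallest CTM containing $U \cup \{r\}$, and $W^*$ is a CTM containing $U \cup \{r\}$, so $U[r] \subseteq W^*$. For the reverse inclusion, I would use the absoluteness of the constructibility hierarchy $L[r]$ between transitive models of $\mathsf{ZFC}$ sharing the same ordinals and containing $r$: since $ORD^{W^*} = ORD^U = ORD^{U[r]}$ and $r \in U[r]$, we have $L^{W^*}[r] = L^{U[r]}[r] \subseteq U[r]$. As $W^* \models V = L[r]$, this gives $W^* \subseteq U[r]$.

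The main obstacle is essentially bookkeeping of the absoluteness argument: one must be sure that the $L[r]$ built inside $W^*$ agrees with the external one sitting inside $U[r]$, which follows from the standard recursive definition of $L[r]$ together with the matching ordinal heights. Beyond that, the proof is just a clean application of Jensen's theorem; no further set-theoretic machinery is needed.
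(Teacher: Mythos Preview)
Your proof is correct and follows essentially the same route as the paper: apply Jensen's coding theorem to the outer model $W$ to obtain $W^*\models V=L[r]$, then use absoluteness of the $L[r]$ hierarchy (together with the matching ordinal heights) to conclude $W^*=U[r]\in\mathbf{M}_S(U)$. The paper's version is terser---it just observes $L^{W'}=L^U\subset U$ and $W'=L^{W'}[r]$---but the underlying argument is identical.
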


\begin{proof}
Let $W \in \mathbf{M}(U)$. Then by Fact \ref{fact37}, there is $W' \in \mathbf{M}(U)$ such that $W \subset W'$ and $W'$ satisfies
\begin{equation*}
    \text{``} V = L[r] \text{ for some } r \subset \omega \text{''.}
\end{equation*}
Since $L^{W'} = L^U \subset U$ and indeed $W' = L^{W'}[r]$ for some real $r \in W'$ from the outside, necessarily $W' = U[r]$. But this means $W' \in \mathbf{M}_{S}(U)$.
\end{proof}

We can characterise members $\mathbf{M}_{S}(U)$ in a way that is conducive to the discussion of relative computability.

\begin{prop}\label{prop37}
Let $U$ be a CTM. Then
\begin{align*}
    \mathbf{M}_{S}(U) & = \{U[x] : x \in \bigcup \mathbf{M}(U) \cap \mathcal{P}(U)\} \\
    & = \{U[x] : x \in \bigcup \mathbf{M}(U) \cap \mathcal{P}(ORD^U)\} \text{.}
\end{align*}
\end{prop}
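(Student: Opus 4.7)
The plan is to establish the double equality as a chain of inclusions, of which only one requires real content: the containment $\mathbf{M}_{S}(U) \subset \{U[x] : x \in \bigcup \mathbf{M}(U) \cap \mathcal{P}(ORD^U)\}$.

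For this inclusion, I would take $W \in \mathbf{M}_{S}(U)$ and invoke Proposition \ref{prop35} to obtain a cardinal $\kappa \in U$ and an unbounded subset $c \subset \kappa$ in $W$ with $W = U[c]$. Then $c \in W \subset \bigcup \mathbf{M}(U)$ and $c \subset \kappa \subset ORD^U$, so $c \in \mathcal{P}(ORD^U)$ and $W$ appears in the required form. The intermediate inclusion $\{U[x] : x \in \bigcup \mathbf{M}(U) \cap \mathcal{P}(ORD^U)\} \subset \{U[x] : x \in \bigcup \mathbf{M}(U) \cap \mathcal{P}(U)\}$ follows from $\mathcal{P}(ORD^U) \subset \mathcal{P}(U)$, which holds because $ORD^U$ consists by definition of elements of $U$. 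The final inclusion $\{U[x] : x \in \bigcup \mathbf{M}(U) \cap \mathcal{P}(U)\} \subset \mathbf{M}_{S}(U)$ is immediate from the definition: any $U[x]$ listed in that set-builder is by construction the smallest CTM containing $U \cup \{x\}$, hence a small extension of $U$.

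The main obstacle has already been cleared by the proof of Proposition \ref{prop35}, so this proposition requires essentially no new machinery. The only subtlety worth flagging is the interpretation of the set-builder notation so that it ranges only over those $x$ for which $U[x]$ is well-defined; for members of $\mathcal{P}(ORD^U)$ appearing in outer models of $U$, this is automatic via Fact \ref{fact34} applied with $y = \sup(x) + 1$, which is an ordinal of $U$ because $\sup(x) < ORD^W = ORD^U$ whenever $x$ is a set in $W$.
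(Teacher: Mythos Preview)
Your proposal is correct and follows essentially the same approach as the paper, which simply cites Fact~\ref{fact34} and Proposition~\ref{prop35}; you have just spelled out the chain of inclusions explicitly. The one point you might tighten is that well-definedness of $U[x]$ for arbitrary $x \in \bigcup \mathbf{M}(U) \cap \mathcal{P}(U)$ (not only $x \subset ORD^U$) also follows from Fact~\ref{fact34} by taking $y = V_\alpha^U$ for $\alpha$ the rank of $x$, though under your stated convention for the set-builder this is not strictly needed to close the circle of inclusions.
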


\begin{proof}
By Fact \ref{fact34} and Proposition \ref{prop35}.
\end{proof}

Proposition \ref{prop37} gives us a natural reducibility relation on 
\begin{equation*}
    \mathbf{N}(U) := \bigcup \mathbf{M}(U) \cap \mathcal{P}(ORD^U)
\end{equation*}
given a CTM $U$. 

\begin{defi}
Let $U$ be a CTM. Define the binary relation $\leq_U$ on $\mathbf{N}(U)$ as follows: for $x, y \in \mathbf{N}(U)$,
\begin{equation*}
    x \leq_U y \iff U[x] \subset U[y] \text{.}
\end{equation*}
Given $x, y \in \mathbf{N}(U)$, write $x \equiv_U y$ iff $x \leq_U y$ and $y \leq_U x$.
\end{defi}

One can easily check that $\leq_U$ is a preorder, so taking its quotient by $\equiv_U$ results in a partial order we shall denote as $(\mathcal{D}(U), \leq_{\mathcal{D}(U)})$.

The case of $(\mathcal{D}(U), \leq_{\mathcal{D}(U)})$ parallels that of the constructibility degrees, in the sense that the former partial order, like the latter one, is isomorphic to a class of set-theoretic universes under inclusion. Specifically, 
\begin{equation*}
   (\mathcal{D}(U), \leq_{\mathcal{D}(U)}) \cong (\mathbf{M}_S(U), \subset) \text{.} 
\end{equation*} 
This motivates viewing $(\mathcal{D}(U), \leq_{\mathcal{D}(U)})$ as degrees of computability over $U$. These degrees are necessarily non-constructible (and indeed, non-constructive) if $U$ is not a model of ``$V = L$''. Whereas the ``constructible in'' relation partially orders a partition of an $\in$-model of $\mathsf{ZFC}$ and is definable within said model, the field of $(\mathcal{D}(U), \leq_{\mathcal{D}(U)})$ may not be realisable as a partition of any such model. We thus expect the structure of $(\mathcal{D}(U), \leq_{\mathcal{D}(U)})$ to be much more varied and dependent on $U$, compared to the structure of the constructibility degrees evaluated in $U$. Nevertheless, we will attempt to stratify $(\mathcal{D}(U), \leq_{\mathcal{D}(U)})$.

Hereon, we shall analyse and reason about $(\mathcal{D}(U), \leq_{\mathcal{D}(U)})$ by moving to $(\mathbf{M}_S(U), \subset)$, so that we can apply set-theoretic arguments and leverage on set-theoretic techniques.

\subsection{Forcing and the Generic Multiverse}

Forcing is a technique invented by Cohen in \cite{cohen1} to prove that the continuum hypothesis is independent of $\mathsf{ZFC}$. It has since taken on a life of its own, becoming an indispensable tool in set theory, and even in other branches of logic. The modern treatment of forcing is largely due to Scott, Solovay, Silver, and Rowbottom, as communicated by Shoenfield in \cite{shoenfield}. We shall give a very brief and high-level introduction to forcing, following the layout found in Section 2.4 of \cite{myself}.

In a typical application of forcing, we start with a CTM, called the \emph{ground model}. The usual forcing argument can be rewritten to occur entirely in the ground model with respect to a \emph{forcing notion} that lives therein. Exactly because of this, we often forget the fact that our ground model is a CTM, or at least we eschew mentioning it. This is also why our ground model is conventionally taken to be $V$ itself. 

Forcing parlance dictates a forcing notion to just be a partial order. The crux of forcing is the analysis of generic filters (which may not exist in $V$) of a forcing notion $\mathbb{P} \in V$ via the \emph{forcing relation} $\Vdash_{\mathbb{P}}$ defined on $\mathbb{P}$ in $V$. Forcing relations are the trick to reasoning about extensions of $V$ without needing to step out of $V$.

\begin{defi}
Let $\mathbb{P} = (P, \leq_{\mathbb{P}})$ be a forcing notion, $D \subset P$ and $A$ be any set. We say a subset $g$ of $P$ \emph{meets} $D$ \emph{with respect to} $\mathbb{P}$ \emph{in} $A$ iff $$g \cap \{p \in P : p \in D \text{ or } \forall q \ (q \leq_{\mathbb{P}} p \implies q \not\in D)\} \cap A \neq \emptyset.$$ We say $g$ \emph{meets} $D$ \emph{with respect to} $\mathbb{P}$ iff $g$ meets $D$ with respect to $\mathbb{P}$ in $V$.
\end{defi}

\begin{defi}
Let $\mathbb{P} = (P, \leq_{\mathbb{P}})$ be a forcing notion and $\mathfrak{A} = (A; \in, \Vec{X})$ be a structure in a possibly expanded language of set theory. We say a subset $g$ of $P$ is $\mathbb{P}$\emph{-generic over} $\mathfrak{A}$ (or $g$ is a $\mathbb{P}$\emph{-generic subset over} $\mathfrak{A}$) iff $g$ meets $D$ with respect to $\mathbb{P}$ in $A$ for all $D$ such that
\begin{itemize}
    \item $D \subset P$
    \item $D$ is dense in $\mathbb{P}$, and
    \item $D$ is definable over $\mathfrak{A}$ with parameters in $A$.
\end{itemize}
If in addition, $g$ is a filter on $\mathbb{P}$, then we call $g$ a $\mathbb{P}$\emph{-generic filter over} $\mathfrak{A}$.
\end{defi}

\begin{defi}
Let $\mathbb{P}$ be a forcing notion and $\mathfrak{A}$ be a structure. A $(\mathbb{P}, \mathfrak{A})$\emph{-generic object} is a set definable from a $\mathbb{P}$-generic filter over $\mathfrak{A}$, with parameters from $\mathfrak{A}$.
\end{defi}

\begin{ob}\label{ob0}
Let $\mathbb{P} = (P, \leq_{\mathbb{P}})$ be a forcing notion and $X$ be any set. Then there is a structure $\mathfrak{A} = (A; \in) \in V$ such that in every outer model of $V$, 
\begin{align*}
    x \text{ is a } (\mathbb{P}, \mathfrak{A}) \text{-generic object} \iff x \text{ is a } (\mathbb{P}, V) \text{-generic object}
\end{align*}
for all $x \subset X$. In fact, we can choose $A$ to be $H(\kappa)$ for any $\kappa > |trcl(\{P, X\})|$.
\end{ob}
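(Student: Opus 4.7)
The plan is to take $\mathfrak{A} = (H(\kappa); \in)$ for any fixed $\kappa > |trcl(\{P, X\})|$ and verify the biconditional in two parts: first, equivalence of the associated notions of $\mathbb{P}$-generic filter; second, equivalence of the associated definability scheme restricted to subsets of $X$. The whole argument runs uniformly inside an arbitrary outer model $W$ of $V$, so the conclusion will automatically hold in ``every outer model of $V$''.

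For the first part, I would observe that every $D \subset P$ (in particular every dense subset of $\mathbb{P}$ in $V$) satisfies $|trcl(D)| \leq |trcl(P)| \leq |trcl(\{P, X\})| < \kappa$, and hence lies in $H(\kappa)$. Every such $D$ is then definable over $\mathfrak{A}$ using $D$ itself as a parameter, so the families of dense subsets of $\mathbb{P}$ definable over $V$ and over $\mathfrak{A}$ coincide. Since also $P \subset H(\kappa)$, the clause ``$\cap A$'' in the definition of ``$g$ meets $D$ with respect to $\mathbb{P}$ in $A$'' becomes vacuous when $A = H(\kappa)$, so $\mathbb{P}$-genericity over $V$ and over $\mathfrak{A}$ are literally the same condition on $g$.

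For the second part, the ``$\Leftarrow$'' direction is immediate: parameters from $H(\kappa)$ are parameters from $V$, and $\mathbb{P}$-genericity transfers by the previous paragraph. For the nontrivial ``$\Rightarrow$'' direction, let $g \in W$ be $\mathbb{P}$-generic over $V$ and suppose $x \subset X$ is $(\mathbb{P}, V)$-generic, defined from $g$ and parameters $\bar a \in V$. Apply the standard nice-name lemma in $V$ to get a ``nice'' $\mathbb{P}$-name $\tau = \bigcup_{y \in X}\{\check{y}\}\times A_y$, with each $A_y \subset P$ an antichain, such that $x = \tau^g$. Bookkeeping on transitive closures gives $|trcl(\tau)| < \kappa$ (using $|X|, |P| < \kappa$ and $|trcl(\check y)| \leq |trcl(y)|$), so $\tau \in H(\kappa)$. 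The formula $\psi(y, g, \tau) \equiv \exists p\,((\check y, p) \in \tau \wedge p \in g)$ is absolute and expresses $x$ using $g$ together with the single parameter $\tau \in H(\kappa)$, witnessing $(\mathbb{P}, \mathfrak{A})$-genericity.

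The main obstacle is ensuring, in the ``$\Rightarrow$'' direction, that $x$ actually lies in $V[g]$ before the nice-name lemma is applicable. Under the standard interpretation that ``definable from $g$ with parameters from $V$'' captures exactly the class $V[g]$ built via $\mathbb{P}$-names, this closure is automatic, and the substance of the observation reduces to the ``small name'' bookkeeping that places $\tau$ in $H(\kappa)$; if one instead reads the definition more literally as first-order definability in the ambient $W$, the same conclusion follows by noting that a $(\mathbb{P}, V)$-generic $x \subset X$ can be decided pointwise from $g$ via the forcing relation $\Vdash_{\mathbb{P}}$, restoring membership $x \in V[g]$ and the nice-name representation.
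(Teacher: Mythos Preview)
The paper states this as an \emph{Observation} and gives no proof, so there is nothing to compare against directly. Your argument supplies exactly the natural justification the paper leaves implicit: since every dense $D \subset P$ already lies in $H(\kappa)$, genericity over $V$ and over $\mathfrak{A}$ coincide; and for the definability clause, the nice-name lemma produces a $\mathbb{P}$-name $\tau$ for $x \subset X$ of hereditary size at most $|X| \cdot |P| \cdot |trcl(X)| \leq |trcl(\{P,X\})| < \kappa$, so $\tau \in H(\kappa)$ and $x$ is definable from $g$ with that single parameter. This is correct and is surely what the author had in mind.

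One small caveat on your final paragraph. Under the literal reading (definability in the ambient outer model $W$), your fallback ``decide $x$ pointwise via $\Vdash_{\mathbb{P}}$'' does not actually close the gap: the forcing relation controls truth in $V[g]$, not in an arbitrary $W \supsetneq V[g]$, so a formula $\phi(y,g,\bar a)$ evaluated in $W$ need not be decided by any condition, and there is no reason $x$ should land in $V[g]$. In other words, under that reading the observation itself may fail, not just your argument for it. The right resolution is the one you already give first: read ``definable from $g$ with parameters in $V$'' as the standard $x \in V[g]$ (equivalently, $x = \dot{x}[g]$ for some $\dot{x} \in V^{\mathbb{P}}$), which is how the paper uses the notion downstream (cf.\ Observation~\ref{smallvgen}). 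With that reading your nice-name argument is complete.
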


Given a forcing notion $\mathbb{P} = (P, \leq_{\mathbb{P}})$ in $V$, the forcing relation $\Vdash_{\mathbb{P}}^V$ is a binary relation that relates members of $P$ with formulas parametrised by members of $V^{\mathbb{P}}$, where $V^{\mathbb{P}}$ is the class of $\mathbb{P}$\emph{-names} in $V$. Both $V^{\mathbb{P}}$ and $\Vdash_{\mathbb{P}}^V$ are uniformly definable in $V$ over the class of all forcing notions $\mathbb{P}$. $\mathbb{P}$-names in $V$ are ``evaluated at" a $\mathbb{P}$-generic filter $g$ over $V$ to obtain the $\mathbb{P}$-generic extension $V[g]$, which is necessarily a CTM, and thus is a small extension of $V$. In more formal writing, if $g$ is a $\mathbb{P}$-generic filter over $V$, then
\begin{equation*}
    V \subset V[g] := \{\dot{x}[g] : \dot{x} \in V^{\mathbb{P}}\},
\end{equation*}
where $\dot{x}[g]$ means ``$x$ evaluated at $g$". Of course, this evaluation procedure is done outside $V$ because any such non-trivial $g$ would not exist in $V$. Even so, the ingenuity of forcing as a technique lies in the amount of knowledge we can deduce about $V[g]$ in $V$ through examining $\Vdash_{\mathbb{P}}^V$ alone.

\begin{con}
When it is clear that the background universe is $V$, we suppress mention of $V$ when writing forcing relations in $V$. This means that given a forcing notion $\mathbb{P}$ in $V$, $\Vdash_{\mathbb{P}}$ is used interchangeably with $\Vdash_{\mathbb{P}}^V$.
\end{con}

\begin{defi}
We call $W$ a \emph{forcing extension} (or a \emph{generic extension}) of $V$ iff there exists a forcing notion $\mathbb{P}$ in $V$ and a $\mathbb{P}$-generic filter $g$ over $V$, such that $W = V[g]$.
\end{defi}

\begin{defi}
We write ``$\Vdash_{\mathbb{P}} \phi$" to mean $$\text{``} \forall p \ (p \in \mathbb{P} \implies p \Vdash_{\mathbb{P}} \phi) \text{''.}$$ 
\end{defi}

The next theorem is important enough to be stated here in full, but not relevant enough to the spirit of this section to warrant a reproduction of its proof.

\begin{thm}\label{thm310}
If $\mathbb{P}$ is a forcing notion in $V$, $p \in \mathbb{P}$, $\phi$ is a formula with $n$ free variables, and $\dot{x}_1$, ..., $\dot{x}_n$ are $\mathbb{P}$-names in $V$, then
\begin{itemize}
    \item 
    \!
    $\begin{aligned}[t]
    & p \Vdash_{\mathbb{P}} \phi(\dot{x}_1, \dots, \dot{x}_n) \iff \\
    & \forall g \ ((g \text{ is } \mathbb{P} \text{-generic over } V \text{ and } p \in g) \\
    & \implies V[g] \models \phi(\dot{x}_1[g], \dots, \dot{x}_n[g])), \text{ and}
    \end{aligned}$
    \item 
    \!
    $\begin{aligned}[t]
    \forall g \ ( & (g \text{ is } \mathbb{P} \text{-generic over } V \text{ and } V[g] \models \phi(\dot{x}_1[g], \dots, \dot{x}_n[g])) \\
    & \implies \exists q \ (q \Vdash_{\mathbb{P}} \phi(\dot{x}_1, \dots, \dot{x}_n) \text{ and } q \in g)) \text{.}
    \end{aligned}$
\end{itemize}
\end{thm}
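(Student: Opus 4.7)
My plan is to define $\Vdash_{\mathbb{P}}$ in $V$ directly by recursion on formula complexity, then verify both bullet points. For the atomic cases I would use the classical Kunen/Shoenfield definition: by a well-founded recursion on the pair of names $(\dot{x}, \dot{y})$, declare $p \Vdash \dot{x} \in \dot{y}$ iff $\{q \leq p : \exists (\dot{z}, r) \in \dot{y} \text{ with } q \leq r \text{ and } q \Vdash \dot{x} = \dot{z}\}$ is dense below $p$, and $p \Vdash \dot{x} = \dot{y}$ iff for every pair $(\dot{z}, r)$ appearing in $\dot{x}$ (resp.\ in $\dot{y}$), the set $\{q \leq p : q \leq r \implies q \Vdash \dot{z} \in \dot{y}\}$ (resp.\ $q \Vdash \dot{z} \in \dot{x}$) is dense below $p$. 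For compound formulas the clauses are standard: $p \Vdash \phi \wedge \psi$ iff $p \Vdash \phi$ and $p \Vdash \psi$; $p \Vdash \neg\phi$ iff no $q \leq p$ satisfies $q \Vdash \phi$; and $p \Vdash \exists y \, \phi(y)$ iff $\{q \leq p : \exists \dot{z} \in V^{\mathbb{P}}, \ q \Vdash \phi(\dot{z})\}$ is dense below $p$.

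Given this definition, the forward half of the first bullet is shown by induction on $\phi$. The atomic case requires an intertwined induction on name rank checking that, whenever $p \in g$ forces an atomic statement, the corresponding equality or membership holds in $V[g]$; here one uses genericity of $g$ to extract from the ``dense below $p$'' clause a condition in $g$ at which the witnessing name is pinned down, and then invokes the inductive hypothesis at lower name rank. The propositional connectives are routine, and for $\exists y\,\phi(y)$ one again uses genericity to pick some $q \in g$ in the dense set witnessing $p \Vdash \exists y\,\phi$, yielding a name $\dot{z}$ with $q \Vdash \phi(\dot{z})$, from which the inductive hypothesis gives $V[g] \models \phi(\dot{z}[g])$.

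The second bullet (the truth lemma) is a short corollary: given that $V[g] \models \phi(\dot{x}_1[g], \dots, \dot{x}_n[g])$, consider the set $D := \{q \in \mathbb{P} : q \Vdash \phi(\dot{x}_1, \dots, \dot{x}_n) \text{ or } q \Vdash \neg\phi(\dot{x}_1, \dots, \dot{x}_n)\}$, definable in $V$ from the parameters. By the clause defining $\Vdash \neg\phi$, every $p$ has an extension in $D$, so $D$ is dense; hence $g$ meets $D$, and the witness cannot force $\neg\phi$ (that would contradict our assumption via the forward direction), so it forces $\phi$. The reverse direction of the first bullet follows by a parallel density argument: if $p$ did not force $\phi$, then some $q \leq p$ would force $\neg\phi$, and any generic $g$ containing $q$ would contain $p$ yet satisfy $V[g] \models \neg\phi$, contradicting the hypothesis. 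The main obstacle throughout is the atomic case: verifying that the recursively defined notion of forcing equality of names aligns with the genuine equality $\dot{x}[g] = \dot{y}[g]$ in $V[g]$ requires a delicate interleaved double induction on name rank, where the forcing-implies-truth direction at a given rank relies on the truth-implies-dense-forcing direction at lower rank, and vice versa. Once this is negotiated, the remaining clauses are bookkeeping of the sort made standard in Kunen and Jech.
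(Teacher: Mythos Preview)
Your proposal is correct and follows the standard textbook proof of the forcing theorem. In fact the paper does not supply its own proof of this statement at all: it explicitly says the theorem ``is important enough to be stated here in full, but not relevant enough to the spirit of this section to warrant a reproduction of its proof'', and directs the reader to Chapter IV of Kunen for details. Your outline is precisely the Kunen/Shoenfield argument the paper is pointing to, so there is nothing to compare.
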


Theorem \ref{thm310} intricately connects the forcing relation $\Vdash_{\mathbb{P}}^V$ with truth in $\mathbb{P}$-generic extensions and is fundamental to forcing as a technique. Colloquially known as the \emph{forcing theorem}, it enables us to reason about truth in generic extensions from within the ground model, and often reduces the argument from one about semantic entailment to one pertaining to combinatorial properties of partial orders. For more details about forcing and the proof of the forcing theorem, the reader is encouraged to read Chapter IV of \cite{kunen}. 

\begin{defi}
Define the relation $\leq_F$ on the set of CTMs as follows:
\begin{equation*}
    M \leq_F N \iff N \text{ is a forcing extension of } M \text{.}
\end{equation*}
\end{defi}

\begin{defi}
A \emph{(full) generic multiverse} is any set of CTMs closed under $\leq_F$.
\end{defi}

\begin{defi}
Let $V$ be a CTM. The \emph{(forcing) grounds of} $V$ is the set
\begin{equation*}
    \{W : V \text{ is a forcing extension of } W\} \text{.}
\end{equation*}
\end{defi}

\begin{defi}
Let $V$ be a CTM. The \emph{outward generic multiverse centred at} $V$ is the set
\begin{equation*}
    \mathbf{M}_F(V) := \{W : W \text{ is a forcing extension of } V\} \text{.}
\end{equation*}
\end{defi} 

The study of generic multiverses, including the coining of the term itself, arguably begins with Woodin in \cite{woodingen}. Since then, much has been studied about the structure of standard generic multiverses under $\leq_F$, with a particularly strong focus on the forcing grounds of fixed CTMs. On the other hand, there has been less interest in the structure of outward generic multiverses under $\leq_F$, perhaps due to the dearth of low-hanging fruits --- a large part of what is known about this structure are essentially theorems about forcing in the traditional sense.

A careful reader might have noticed the overloading of the notation $\cdot [\cdot]$ to represent both small extensions and forcing extensions. This is intentional, for the latter class is subsumed under the former.

\begin{fact}
For some $\mathbb{P} \in V$, let $g$ be a $\mathbb{P}$-generic filter over a CTM $V$. Then $V[g]$ is the smallest CTM $W$ for which
\begin{itemize}
    \item $V \subset W$, and
    \item $g \in W$.
\end{itemize}
As a consequence, $\mathbf{M}_F(V) \subset \mathbf{M}_S(V)$.
\end{fact}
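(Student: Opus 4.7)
The plan is to establish minimality by appealing to the canonical construction of $V[g]$ via name evaluation, and exploiting the absoluteness of this evaluation procedure across transitive models of $\mathsf{ZFC}$.

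First, I would recall the standard fact that $V[g] = \{\dot{x}[g] : \dot{x} \in V^{\mathbb{P}}\}$, where the evaluation $\dot{x}[g]$ is given by the transfinite recursion
\begin{equation*}
    \dot{x}[g] = \{\dot{y}[g] : \exists p \in g \ (\dot{y}, p) \in \dot{x}\}\text{.}
\end{equation*}
From well-known forcing theorems (see e.g.\ Chapter IV of \cite{kunen}), $V[g]$ is a CTM, $V \subset V[g]$ via the canonical check-names $\check{v}$ for $v \in V$, and $g \in V[g]$ via the canonical name $\dot{G} := \{(\check{p}, p) : p \in \mathbb{P}\}$. Together these witness that $V[g]$ is a CTM containing $V \cup \{g\}$.

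Next, for minimality, I would take an arbitrary CTM $W$ with $V \subset W$ and $g \in W$, and argue $V[g] \subset W$. Since $V^{\mathbb{P}} \subset V \subset W$, every $\mathbb{P}$-name $\dot{x} \in V$ already belongs to $W$; since $g \in W$ as well, the recursion displayed above is a well-defined transfinite recursion internal to $W$, its right-hand side being uniformly definable from the parameter $g$ using just $\in$. The resulting evaluation function is absolute between $W$ and the meta-theory by Mostowski absoluteness of $\Delta_1$-recursions on well-founded relations, so $\dot{x}[g] \in W$ for every $\dot{x} \in V^{\mathbb{P}}$, giving $V[g] \subset W$. Combined with transitivity of $W$, this forces $V[g] \subset W$ as sets, completing the characterisation.

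The consequence $\mathbf{M}_F(V) \subset \mathbf{M}_S(V)$ is then immediate: any forcing extension $W = V[g]$ is, by the main claim, the smallest CTM containing $V \cup \{g\}$, so $g$ generates $W$ from $V$ in the sense of the definition of a small extension.

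The only genuinely delicate point is the absoluteness of name evaluation between $W$ and the background meta-universe; the rest is routine. Once one notes that the recursion uses only $\in$ and the parameter $g$ and that its rank is bounded by the rank of the name (an ordinal in $V \subset W$), absoluteness follows from the standard recursion-theorem in $\mathsf{ZFC}$, and the proof goes through.
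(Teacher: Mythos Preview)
Your argument is correct and is precisely the standard textbook proof (essentially what one finds in Chapter~IV of \cite{kunen}). Note, however, that the paper itself does not supply a proof of this statement: it is labelled a \emph{Fact} and treated as background material from the forcing literature, so there is no ``paper's own proof'' to compare against. Your write-up would serve perfectly well as the omitted justification.
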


It turns out that forcing extensions of a CTM $V$ are downward-closed in $\mathbf{M}(V)$ (and thus, also in $\mathbf{M}_S(V)$). This is just a rephrasing of the fact below.

\begin{fact}\label{fact221}
Let $V \subset U \subset W$ be CTMs such that
\begin{itemize}
    \item $W$ is a forcing extension of $V$, 
    \item $U$ is an outer model of $V$, and
    \item $U$ is an inner model of $W$.
\end{itemize}
Then 
\begin{itemize}
    \item $U$ is a forcing extension of $V$, and
    \item $W$ is a forcing extension of $U$.
\end{itemize}
\end{fact}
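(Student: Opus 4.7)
The plan is to invoke the classical \emph{intermediate model theorem} due to Solovay: any transitive model of $\mathsf{ZFC}$ wedged between a CTM $V$ and one of its forcing extensions $V[g]$ is itself a forcing extension of $V$, and $V[g]$ is in turn a forcing extension of it. This immediately yields both halves of the fact once we set $W = V[g]$ for some $\mathbb{P} \in V$ and some $\mathbb{P}$-generic filter $g$ over $V$. To sketch the proof, I would first pass to the Boolean completion $\mathbb{B} := \mathrm{r.o.}(\mathbb{P}) \in V$, reinterpreting $g$ as a $\mathbb{B}$-generic ultrafilter $\hat{g}$ with $V[\hat{g}] = W$; this lets us freely form complete subalgebras and quotients.

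The heart of the argument is to exhibit a complete subalgebra $\mathbb{B}_U \in V$ of $\mathbb{B}$ such that, setting $h := \hat{g} \cap \mathbb{B}_U$, $h$ is $\mathbb{B}_U$-generic over $V$ and $V[h] = U$. I would construct $\mathbb{B}_U$ as follows: for each set of ordinals $x \in U$, fix a name $\dot{x} \in V^{\mathbb{B}}$ with $\dot{x}[\hat{g}] = x$, and let $\mathbb{B}_U$ be the complete subalgebra of $\mathbb{B}$ generated by the Boolean values $[\![ \check{\alpha} \in \dot{x} ]\!]$ for all such $x$ and all ordinals $\alpha$. Genericity of $h$ over $V$ follows routinely, since dense subsets of $\mathbb{B}_U$ in $V$ remain predense in $\mathbb{B}$ in $V$ and are thus met by $\hat{g}$. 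The inclusion $U \subset V[h]$ uses that each set of ordinals $x \in U$ is recoverable as $\{\alpha : [\![ \check{\alpha} \in \dot{x} ]\!] \in h\} \in V[h]$, and Proposition \ref{prop22} inside $U$ extends this to arbitrary elements of $U$. The converse $V[h] \subset U$ follows from $\mathbb{B}_U \in V \subset U$ together with the definability of $h$ inside $U$ from $\mathbb{B}_U$ and the membership facts for sets of ordinals in $U$.

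For the remaining half, the quotient complete Boolean algebra $\mathbb{B}/\mathbb{B}_U$ lies in $V[h] = U$, and the image of $\hat{g}$ modulo $\mathbb{B}_U$ is a $\mathbb{B}/\mathbb{B}_U$-generic ultrafilter over $U$ satisfying $U[\hat{g}/\mathbb{B}_U] = V[\hat{g}] = W$ by a standard lifting argument. The main obstacle is justifying that $\mathbb{B}_U \in V$, not merely $\in W$: although $\mathbb{B}_U$ is defined using $U$, one must identify it with an internally $V$-definable subalgebra of $\mathbb{B}$, for instance by characterising its generators through the forcing theorem (Theorem \ref{thm310}) applied to names whose generic values land in intermediate $\mathsf{ZFC}$-models of $V[\hat{g}]$. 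This is the conceptual core of Solovay's theorem and is where the hypothesis that $U \models \mathsf{ZFC}$ genuinely enters.
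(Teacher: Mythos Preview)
The paper states this as a \emph{Fact} without proof, treating it as a known result (the intermediate model theorem). There is thus no paper proof to compare against.

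Your sketch follows the standard Boolean-subalgebra strategy and you correctly flag the crux --- showing $\mathbb{B}_U \in V$ --- as the step you have not carried out. The construction you propose, however, does not lead there even in principle: you generate $\mathbb{B}_U$ from Boolean values $[\![\check\alpha \in \dot x]\!]$ ranging over \emph{all} sets of ordinals $x \in U$, with names $\dot x$ chosen externally, so the set of generators is defined via a choice function living outside $V$ and has no reason to lie in $V$. The missing idea is to first prove that $U = V[A]$ for a \emph{single} set of ordinals $A$. This uses that $W = V[g]$ is a set-forcing extension: there is a cardinal $\kappa \in V$ (essentially $|\mathbb{B}|^V$) such that every set in $W$, hence in $U$, is coded by a subset of $\kappa$; inside $U$ one then amalgamates these codes into a single $A \subset \kappa$ with $U = V[A]$ (and here the hypothesis $U \models \mathsf{ZFC}$ is genuinely used). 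With one name $\dot A \in V$ in hand, the complete subalgebra it generates is computed entirely in $V$, and both inclusions $U \subset V[h]$ and $V[h] \subset U$ go through cleanly --- in particular your soft spot in the argument for $V[h] \subset U$ (recovering all of $h$, not just its generators, inside $U$) dissolves once only a single $\dot A$ is in play.
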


An immediate follow-up question to the previous two facts is, 
\begin{quote}
    ``Must $\mathbf{M}_F(V)$ always equal $\mathbf{M}_S(V)$?''
\end{quote}
There is an easy argument for the answer being ``no'', if we assume a sufficiently strong large cardinal axiom in addition to $\mathsf{ZFC}$.

\begin{prop}\label{prop318}
Let 
\begin{equation*}
    \mathsf{T} := \mathsf{ZFC} + \text{``}0^{\sharp}\text{ exists''.}
\end{equation*} 
Assume $\mathsf{ZFC} + \text{``there is a transitive model of } T \text{''}$. Then $\mathbf{M}_F(V) \subsetneq \mathbf{M}_S(V)$ for some CTM $V$.
\end{prop}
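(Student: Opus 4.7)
The plan is to exhibit a CTM $V$ modeling ``$V = L$'' together with a concrete small extension of $V$ that is demonstrably not a forcing extension. The candidate witness is $V[0^\sharp]$, which should fall outside $\mathbf{M}_F(V)$ by the classical fact that $0^\sharp$ cannot be produced by set forcing over a model of $V = L$.

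To set things up, I would invoke the meta-theoretic hypothesis to fix a CTM $V^*$ of $T$, and set $V := L^{V^*}$. Then $V$ is a CTM with $V \models V = L$, and $V^*$ is an outer model of $V$. Let $r \in V^*$ be the real that $V^*$ identifies as $0^\sharp$; since $L^{V^*} = V$, this $r$ really is $0^\sharp$ with respect to $V$'s constructible universe (which is $V$ itself). Because $r \in \mathcal{P}(\omega) \cap V^*$ and $\omega \in V$, Fact \ref{fact34} gives the existence of $V[r]$, so $V[r] \in \mathbf{M}_S(V)$.

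For the contradiction step, suppose $V[r] \in \mathbf{M}_F(V)$, say $V[r] = V[G]$ for some $V$-generic filter $G$ on a forcing notion $\mathbb{P} \in V$. Then $r \in V[G]$, so $V[G] \models $ ``$0^\sharp$ exists''. But $V \models V = L$ yields $V \models $ ``$0^\sharp$ does not exist'', and by the classical invariance of ``$0^\sharp$ exists'' under set forcing, $V[G] \models $ ``$0^\sharp$ does not exist'', a contradiction. Hence $V[r] \in \mathbf{M}_S(V) \setminus \mathbf{M}_F(V)$, which gives $\mathbf{M}_F(V) \subsetneq \mathbf{M}_S(V)$, as required.

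The main obstacle is the invocation of the fact that set forcing neither adds nor destroys $0^\sharp$. The standard argument rests on the absoluteness of the defining property of $0^\sharp$: its existence is equivalent to the presence of arbitrarily large ordinals that are Silver indiscernibles for $L$, a condition phrased purely in terms of ordinals (preserved by forcing) and the class $L$ (itself preserved by forcing). Any witness appearing in $V[G]$ therefore already lives in $V$. This is a folklore-level result in inner model theory and will be cited without reproducing its proof.
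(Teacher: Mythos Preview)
Your proof is correct and follows essentially the same approach as the paper: take $V = L^{W}$ for a CTM $W \models \mathsf{T}$, and exhibit a small extension of $V$ encoding $0^\sharp$ that cannot be a forcing extension because set forcing over a model of $V = L$ cannot produce $0^\sharp$. The only cosmetic difference is that you use the real $0^\sharp$ itself as the generator, whereas the paper uses an uncountable set $I$ of Silver indiscernibles and the extension $L[I]^{W}$; both rest on the same folklore invariance fact, which neither you nor the paper proves in detail.
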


\begin{proof}
Given the hypothesis of the proposition, there is a CTM $W$ satisfying $\mathsf{T}$. Define 
\begin{gather*}
    V := L^W \\
    U := L[I]^W \text{,}
\end{gather*}
where $I$ is an uncountable set of Silver indiscernibles in $W$ witnessing the fact that $0^{\sharp}$ exists. Then $U$ is a small extension of $V$ generated by $I \in U$ but not a forcing extension of $V$.
\end{proof}

By Proposition \ref{prop318}, it is consistent that $\mathbf{M}_S(V) \setminus \mathbf{M}_F(V)$ is non-trivial --- and includes at least a cone of $(\mathbf{M}_S(V), \subset)$ --- under strong enough assumptions. However, the small outward multiverse example exhibited by the proof of the proposition is undesirable because it is centred at a universe that is by many measures, ``too small'' (e.g. it has a trivial theory of constructibility degrees). 

A much stronger and much more useful statement would be
\begin{equation}\label{eq31}
    \text{``} \mathbf{M}_F(V) \neq \mathbf{M}_S(V) \text{ for all } V \text{''.}
\end{equation}
Let us sketch how this can be true. We start with a universe $V$, force (with a proper class forcing notion) to an outer model $V[G]$ of $V$ satisfying ``$V[G]$ is not a set forcing extension of $V$'', then apply Fact \ref{fact37} to $V[G]$. The end result is an outer model $W$ of $V[G]$ such that $W = L^V[r]$ for some real $r \in W$. This can even be arranged such that $V$ is a definable class in $W$. Now if $W$ is a set forcing extension of $V$, then so are all intermediate outer models of $V$, including $V[G]$. But we have just ensured $V[G]$ is not a set forcing extension of $V$. 

This argument, which includes a proof of Fact \ref{fact37}, can be formalised in a conservative second order extension of $\mathsf{ZFC}$ (the second-order portion is needed for proper class forcing), so our meta-theory suffices when $V$ is a CTM. We have thus established --- albeit sketchily so --- the following.

\begin{fact}\label{fact319}
(\ref{eq31}) holds.
\end{fact}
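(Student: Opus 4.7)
The plan is to follow the sketch the author has already outlined, filling in the key ingredients and checking that the steps compose. Fix an arbitrary CTM $V$; I want to produce some $W \in \mathbf{M}_S(V) \setminus \mathbf{M}_F(V)$.

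First, I would use class forcing over $V$ to obtain an outer model $V[G]$ that, provably, is not a set forcing extension of $V$. A standard way to do this is to force with an Easton-style class product that blows up the continuum function on a proper class of cardinals, or alternatively to add a Cohen subset to each regular cardinal; any such $V[G]$ cannot be a set forcing extension of $V$ because a set forcing changes the power-set function only below the cardinality of the forcing notion. The class forcing should be tame (pretame, in Friedman's terminology) so that $V[G]$ is a model of $\mathsf{ZFC}$ and $V$ is a definable class in $V[G]$. Next, I would apply Jensen's coding theorem (Fact \ref{fact37}) inside the appropriate second-order extension, working over $V[G]$, to obtain an outer model $W$ of $V[G]$ and a real $r \in W$ with $W = L^{V[G]}[r] = L^{V}[r]$, arranged so that $V$ remains a definable class of $W$ via the coding parameters.

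Having produced $W$, the next step is to verify the two membership claims. That $W \in \mathbf{M}_S(V)$ is straightforward: $r$ is a real (hence a bounded subset of $\omega$), so by Fact \ref{fact34} the small extension $V[r]$ exists, and since $V \subset L^V[r] = W$ and $r \in W$, minimality forces $V[r] \subset W$; conversely, $W = L^V[r] \subset V[r]$ because $L^V \subset V$. Hence $W = V[r] \in \mathbf{M}_S(V)$. To show $W \notin \mathbf{M}_F(V)$, I would argue by contradiction: if $W$ were a set forcing extension of $V$, then since $V \subset V[G] \subset W$ with $V[G]$ an outer model of $V$ and an inner model of $W$, Fact \ref{fact221} would force $V[G]$ itself to be a set forcing extension of $V$, contradicting the choice of $G$.

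The main obstacle is the formalisation in our meta-theory: class forcing and the coding theorem are both delicate, and I need to make sure everything can be carried out in a conservative second-order extension of $\mathsf{ZFC}$ over the CTM $V$. The critical subtleties are (i) choosing a class forcing that is pretame so that the forcing theorem survives and $V$ remains definable in $V[G]$, (ii) verifying that Jensen coding can be applied on top of $V[G]$ with $V[G]$ (not merely $L$) playing the role of the ``ground'', which is exactly the content of Jensen's theorem as stated in Fact \ref{fact37}, and (iii) checking that the chain $V \subset V[G] \subset W$ has $V[G]$ as an inner model of $W$ so that Fact \ref{fact221} applies; this follows because the coding forcing preserves cardinals and ordinals and $V[G]$ is a definable class in $W$. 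Once these are in hand, the contradiction is immediate and the fact follows.
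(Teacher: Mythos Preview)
Your proposal is correct and follows exactly the route sketched in the paper: class-force to an outer model $V[G]$ that is not a set-forcing extension of $V$, apply Jensen coding (Fact~\ref{fact37}) to get $W = L^V[r] = V[r]$, and then use Fact~\ref{fact221} on the chain $V \subset V[G] \subset W$ to derive a contradiction from $W \in \mathbf{M}_F(V)$. One harmless slip: a real $r \subset \omega$ is not in general a \emph{bounded} subset of $\omega$, but Fact~\ref{fact34} only needs $r \in \mathcal{P}(\omega)$ with $\omega \in V$, so the argument goes through unchanged.
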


In actuality, we can switch $V$ for any $U \in \mathbf{M}_S(V)$ in the argument above and obtain a stronger conclusion.

\begin{fact}\label{fact229}
Given a CTM $V$,
\begin{equation*}
    (\mathbf{M}_S(V) \setminus \mathbf{M}_F(V), \subset)
\end{equation*}
is a cofinal subposet of $(\mathbf{M}_S(V), \subset)$.
\end{fact}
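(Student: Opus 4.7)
The plan is to relativise the sketched proof of Fact \ref{fact319} by running its argument with $W$ in place of $V$, for each $W \in \mathbf{M}_S(V)$, and then transfer the resulting non-forcing small extension from $W$ back down to $V$. Cofinality of $(\mathbf{M}_S(V) \setminus \mathbf{M}_F(V), \subset)$ in $(\mathbf{M}_S(V), \subset)$ would then follow immediately.

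First I would fix an arbitrary $W \in \mathbf{M}_S(V)$ and rerun the construction in the proof sketch of Fact \ref{fact319} with $W$ as the ground model. This produces a proper class forcing extension $W[G]$ of $W$ that is not a set forcing extension of $W$, followed by an application of Jensen's coding theorem (Fact \ref{fact37}) to $W[G]$ to obtain an outer model $W^*$ with $W^* = L^{W[G]}[r]$ for some real $r$, and with $W$ still a definable class in $W^*$. Since $W$ and $W[G]$ share ordinals, $L^{W[G]} = L^W \subset W$, so $W^* = W[r] \in \mathbf{M}_S(W)$. Moreover $W^* \notin \mathbf{M}_F(W)$: otherwise Fact \ref{fact221} applied to $W \subset W[G] \subset W^*$ would force the intermediate model $W[G]$ to be a set forcing extension of $W$, contradicting the construction.

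Next I would pass from $W$ down to $V$. Since $W \in \mathbf{M}_S(V)$, Proposition \ref{prop35} supplies an unbounded subset $y \in W$ of some $V$-cardinal with $W = V[y]$. Pairing $y$ with $r$ inside $W^*$ yields a single set of ordinals $z \in W^*$ from which both $y$ and $r$ are recoverable; hence $V[z]$ contains $W = V[y]$ and $r$, so $V[z] \supset W[r] = W^*$, and conversely $z \in W^*$ gives $V[z] \subset W^*$. Therefore $W^* = V[z] \in \mathbf{M}_S(V)$. Finally, were $W^*$ in $\mathbf{M}_F(V)$, then from $V \subset W \subset W^*$, with $W$ an outer model of $V$ and an inner model of $W^*$, Fact \ref{fact221} would give $W^* \in \mathbf{M}_F(W)$, contradicting the previous paragraph. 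Hence $W^* \in \mathbf{M}_S(V) \setminus \mathbf{M}_F(V)$ lies above $W$.

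The main obstacle I anticipate is the first step: the sketch for Fact \ref{fact319} was given for the distinguished CTM $V$, but here we need it to run with an arbitrary $W \in \mathbf{M}_S(V)$ as the ground model. Since $W$ is itself a CTM in our meta-theory, the same argument — proper class forcing formalised in a conservative second-order extension of $\mathsf{ZFC}$, followed by Jensen coding — applies verbatim with $W$ replacing $V$; I would add a brief remark making this relativisation explicit rather than re-deriving it.
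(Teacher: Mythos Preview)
Your proposal is correct and follows essentially the same approach as the paper, which simply states that one can ``switch $V$ for any $U \in \mathbf{M}_S(V)$'' in the sketch preceding Fact~\ref{fact319}. Your step~5 (pairing $y$ with $r$ to obtain a single generator $z$ over $V$) is slightly more elaborate than necessary: since $L^{W[G]} = L^W = L^V$, Jensen coding already gives $W^* = L^V[r]$, and as $W^* \supset V$ this forces $W^* = V[r] \in \mathbf{M}_S(V)$ directly, without any pairing; but this is a cosmetic difference, not a different route.
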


Intuitively, Facts \ref{fact221} and \ref{fact229} tell us that there are many objects inaccessible by forcing. Do these objects have ``local first-order properties'' not shared by any set in any forcing extension? Much of the rest of this paper aims for a partial answer to the aforementioned question. 

\subsection{Theories with Constraints in Interpretation}

\emph{Theories with constraints in interpretation} (henceforth, \emph{TCI}s) were conceived in \cite{myself} as a convenient means of --- among other things --- looking at generic objects produced by set-theoretic forcing.

\begin{defi}(Lau, \cite{myself})
A \emph{first-order theory with constraints in interpretation} (\emph{first-order TCI}) --- henceforth, just \emph{theory with constraints in interpretation} (\emph{TCI}) --- is a tuple $(T, \sigma, \dot{\mathcal{U}}, \vartheta)$, where
\begin{itemize}
    \item $T$ is a first order theory with signature $\sigma$,
    \item $\dot{\mathcal{U}}$ is a unary relation symbol not in $\sigma$,
    \item $\vartheta$ is a function (the \emph{interpretation constraint map}) with domain $\sigma \cup \{\dot{\mathcal{U}}\}$, 
    \item if $x \in ran(\vartheta)$, then there is $y$ such that 
    \begin{itemize}[label=$\circ$]
        \item either $x = (y, 0)$ or $x = (y, 1)$, and
        \item if $\vartheta(\dot{\mathcal{U}}) = (z, a)$, then $y \subset z^n$ for some $n < \omega$, and
    \end{itemize}
    \item if $\vartheta(\dot{\mathcal{U}}) = (z, a)$, then 
    \begin{itemize}[label=$\circ$]
        \item $z \cap z^n = \emptyset$ whenever $1 < n < \omega$, and
        \item $z^m \cap z^n = \emptyset$ whenever $1 < m < n < \omega$.
    \end{itemize}
\end{itemize}
We call members of the interpretation constraint map \emph{interpretation constraints}.

For simplicity's sake, we always assume members of $T$ are in prenex normal form.
\end{defi}

\begin{defi}(Lau, \cite{myself})
Let $(T, \sigma, \dot{\mathcal{U}}, \vartheta)$ be a TCI. We say $$\mathcal{M} := (U; \mathcal{I}) \models^* (T, \sigma, \dot{\mathcal{U}}, \vartheta)$$ --- or $\mathcal{M}$ \emph{models} $(T, \sigma, \dot{\mathcal{U}}, \vartheta)$ --- iff all of the following holds:
\begin{itemize}
    \item $\mathcal{M}$ is a structure,
    \item $\sigma$ is the signature of $\mathcal{M}$,
    \item $\mathcal{M} \models T$,
    \item if $\vartheta(\dot{\mathcal{U}}) = (y, 0)$, then $U \subset y$,
    \item if $\vartheta(\dot{\mathcal{U}}) = (y, 1)$, then $U = y$, and
    \item for $\dot{X} \in \sigma$,
    \begin{itemize}[label=$\circ$]
        \item if $\dot{X}$ is a constant symbol and $\vartheta(\dot{X}) = (y, z)$, then $\mathcal{I}(\dot{X}) \in y \cap U$,
        \item if $\dot{X}$ is an $n$-ary relation symbol and $\vartheta(\dot{X}) = (y, 0)$, then $\mathcal{I}(\dot{X}) \subset y \cap U^{n}$,
        \item if $\dot{X}$ is an $n$-ary relation symbol and $\vartheta(\dot{X}) = (y, 1)$, then $\mathcal{I}(\dot{X}) = y \cap U^{n}$,
        \item if $\dot{X}$ is an $n$-ary function symbol and $\vartheta(\dot{X}) = (y, 0)$, then $$\{z \in U^{n+1} : \mathcal{I}(\dot{X})(z \! \restriction_n) = z(n)\} \subset y \cap U^{n+1}, \text{ and}$$
        \item if $\dot{X}$ is an $n$-ary function symbol and $\vartheta(\dot{X}) = (y, 1)$, then $$\{z \in U^{n+1} : \mathcal{I}(\dot{X})(z \! \restriction_n) = z(n)\} = y \cap U^{n+1}.$$
    \end{itemize}
\end{itemize}
We say $(T, \sigma, \dot{\mathcal{U}}, \vartheta)$ has a model if there exists $\mathcal{M}$ for which $\mathcal{M} \models^* (T, \sigma, \dot{\mathcal{U}}, \vartheta)$.
\end{defi}  

We can define a notion of complexity on TCIs. This will help us subsequently classify method definitions according to their complexity. 

\begin{defi}
Let $\phi$ be a first-order formula over a signature $\sigma$. We inductively define what it means for $\phi$ to be $\Pi_n$ or $\Sigma_n$ as $n$ ranges over the natural numbers.
\begin{enumerate}[label=(\arabic*)]
    \item If $n = 0$, then $\phi$ is $\Pi_n$ iff $\phi$ is $\Sigma_n$ iff $\phi$ is quantifier-free.
    \item\label{3292} If $n = m + 1$ for some $m < \omega$, then 
    \begin{enumerate}[label=(\alph*)]
        \item\label{3292a} $\phi$ is $\Pi_n$ iff there is a $\Sigma_m$ formula $\varphi$, a number $k < \omega$, and variable symbols $x_1, \dots, x_k$ not bound in $\varphi$, such that 
        \begin{equation*}
            \phi = \ulcorner \forall x_1 \dots \forall x_k \ \varphi \urcorner \text{, and}
        \end{equation*}
        \item\label{3292b} $\phi$ is $\Sigma_n$ iff there is a $\Pi_m$ formula $\varphi$, a number $k < \omega$, and variable symbols $x_1, \dots, x_k$ not bound in $\varphi$, such that 
        \begin{equation*}
            \phi = \ulcorner \exists x_1 \dots \exists x_k \ \varphi \urcorner \text{.}
        \end{equation*}
    \end{enumerate}
\end{enumerate}
Note that if $k = 0$ in \ref{3292}\ref{3292a} and \ref{3292}\ref{3292b}, then $\phi$ is $\Sigma_m$ and $\Pi_m$ respectively.
\end{defi}

\begin{defi}[Lau, \cite{myself}]
A TCI $(T, \sigma, \dot{\mathcal{U}}, \vartheta)$ is $\Pi_n$ iff $T$ contains only $\Pi_n$ sentences.

A TCI $(T, \sigma, \dot{\mathcal{U}}, \vartheta)$ is $\Sigma_n$ iff $T$ contains only $\Sigma_n$ sentences.
\end{defi}

As we shall see in Proposition \ref{prop323}, the existence of models of a TCI need not be absolute between between transitive models of $\mathsf{ZFC}$. There is thus a fundamental difference between model existence of TCIs and that of first-order theories. This should reflect in our definition of what it means for a TCI to be consistent.

\begin{defi}[Lau, \cite{myself}]
A TCI $(T, \sigma, \dot{\mathcal{U}}, \vartheta)$ is \emph{consistent} iff $(T, \sigma, \dot{\mathcal{U}}, \vartheta)$ has a model in some outer model of $V$. 
\end{defi}

It might seem at first glance, that the the consistency of a TCI is not a first-order property in the language of set theory, since it involves quantifying over all outer models. This is not a real problem because said definition is semantically equivalent to a first-order sentence in $V$ with parameters in $V$.

\begin{lem}\label{lem329}
Let $\mathfrak{T} = (T, \sigma, \dot{\mathcal{U}}, \vartheta)$ be a TCI. Then $\mathfrak{T}$ is consistent iff $$\Vdash_{Col(\omega, \lambda)} \exists \mathcal{M} \ (``\mathcal{M} \models^* \mathfrak{T}"),$$ where $\lambda \geq |H(|trcl(\mathfrak{T})|^+)|$.
\end{lem}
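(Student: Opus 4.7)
The proof splits into two directions.

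For the direction $(\Leftarrow)$, I would use the meta-theoretic existence of generic filters: given $\Vdash_{Col(\omega, \lambda)} \exists \mathcal{M}\,(\mathcal{M} \models^* \mathfrak{T})$, pick any $Col(\omega, \lambda)$-generic filter $g$ over $V$ in some outer model (such $g$ exists by our meta-theoretic assumption); then Theorem \ref{thm310} gives $V[g] \models \exists \mathcal{M}\,(\mathcal{M} \models^* \mathfrak{T})$, so $V[g]$ is an outer model of $V$ containing a model of $\mathfrak{T}$.

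For the direction $(\Rightarrow)$, I would argue by contradiction. Suppose $\mathcal{M} \models^* \mathfrak{T}$ for some $\mathcal{M}$ in an outer model $W$ of $V$, but some $p \in Col(\omega, \lambda)$ forces $\neg \exists \mathcal{M}'\,(\mathcal{M}' \models^* \mathfrak{T})$. Since $W$ is a CTM, the meta-theory supplies a $Col(\omega, \lambda)$-generic filter $g$ over $W$ containing $p$. Then $g$ is also generic over $V$ (as $V \subset W$), so $V[g] \subset W[g]$ and $p \in g$ delivers $V[g] \models \neg \exists \mathcal{M}'\,(\mathcal{M}' \models^* \mathfrak{T})$; meanwhile, $\mathcal{M} \in W \subset W[g]$ and $\models^*$ is absolute between transitive models (being essentially a first-order check of constraints and axioms), so $W[g]$ still contains a model of $\mathfrak{T}$. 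The goal reduces to transferring this witness from $W[g]$ down to $V[g]$.

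The key tool is $\Sigma^1_1$ absoluteness. Let $\kappa := |trcl(\mathfrak{T})|^+$; the hypothesis $\lambda \geq |H(\kappa)|$ combined with $Col(\omega, \lambda)$ collapsing $\lambda$ to $\omega$ makes $H(\kappa)^V$ --- and in particular every parameter set appearing in $\vartheta$ --- countable in $V[g]$. Fixing in $V[g]$ a bijection $b : \omega \to H(\kappa)^V$, I would use $b$ to code $(H(\kappa)^V, \in)$, and hence $\mathfrak{T}$ itself, as a real. Under this coding, the assertion ``there is a real coding a structure $\mathcal{M}'$ satisfying every axiom of $T$ and every interpretation constraint in $\vartheta$'' becomes $\Sigma^1_1$ in parameters from $V[g]$: the constraint clauses --- of either subset form $(y', 0)$ or equality form $(y', 1)$ --- reduce to $\Delta_0$ membership conditions on the coded candidate, and satisfaction of the countable theory $T$ on a countable structure is arithmetical. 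In $W[g]$ this $\Sigma^1_1$ statement holds, witnessed by coding $\mathcal{M}$ via $b \in V[g] \subset W[g]$. Mostowski's absoluteness theorem applied to the transitive pair $V[g] \subset W[g]$ then transfers the witness downwards, producing a model of $\mathfrak{T}$ in $V[g]$ and contradicting the choice of $p$.

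The main obstacle I foresee is certifying that the coded ``$\mathcal{M}' \models^* \mathfrak{T}$'' predicate is genuinely $\Sigma^1_1$: once all parameter sets in $\vartheta$ are countable in $V[g]$, the interpretation constraints (notably the equality forms $(y', 1)$, whose demand of full equality rather than mere inclusion must be faithfully preserved in the coding) should translate to cleanly checkable arithmetic conditions on the coded candidate, and satisfaction of the countable $T$ must be formalised as an arithmetic relation via Tarski's recursion on formula complexity. Ensuring these standard moves fit together so that absoluteness applies uniformly --- and so that the $\Sigma^1_1$ witness in $V[g]$ decodes to an actual $\models^*$-model rather than a weakened one --- is the technical crux.
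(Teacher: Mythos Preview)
Your argument is correct. The paper does not actually prove this lemma here; it simply cites it as Lemma~5.25 of \cite{myself}, so there is no in-paper proof to compare against. Your approach --- forcing with $Col(\omega,\lambda)$ over the outer model $W$, observing that the resulting filter is also generic over $V$, and then transferring the witness from $W[g]$ down to $V[g]$ by coding everything through a bijection $b:\omega\to H(\kappa)^V$ and invoking $\Sigma^1_1$ (Mostowski) absoluteness --- is the standard route for statements of this shape, and your identification of the technical crux (that the coded form of ``$\mathcal{M}'\models^*\mathfrak{T}$'' is genuinely $\Sigma^1_1$ once all parameter sets in $\vartheta$ and the theory $T$ have become countable) is exactly right. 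One minor remark: you need not argue by contradiction via a single condition $p$, since $Col(\omega,\lambda)$ is weakly homogeneous and the statement being forced has only ground-model parameters; it suffices to exhibit a single generic extension of $V$ containing a model of $\mathfrak{T}$, which your $\Sigma^1_1$-absoluteness step already does.
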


\begin{proof}
This is Lemma 5.25 of \cite{myself}.
\end{proof}

Lemma \ref{lem329} gives us a uniform way of checking in $V$ if a TCI is consistent, by appealing to a suitable forcing relation. 

Morally speaking, the consistency of a theory --- however it is defined --- should be absolute in a strong enough sense. This is the case for first-order theories, any of which consistency is absolute for transitive models of set theory. The following Lemma establishes a similar absoluteness property with regards to the consistency of a TCI.

\begin{lem}[Lau, \cite{myself}]\label{lem330}
Let $\mathfrak{T} = (T, \sigma, \dot{\mathcal{U}}, \vartheta)$ be a TCI. Then $\mathfrak{T}$ being consistent is absolute for transitive models of $\mathsf{ZFC}$ sharing the same ordinals.
\end{lem}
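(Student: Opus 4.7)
The plan is to reduce consistency of $\mathfrak{T}$ to a concrete forcing statement via Lemma \ref{lem329}, and then invoke absoluteness of this statement between transitive models sharing the same ordinals.

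Let $M$ and $N$ be transitive models of $\mathsf{ZFC}$ with $ORD^M = ORD^N$ and $\mathfrak{T} \in M \cap N$. First I choose an ordinal $\lambda$ in the common ordinal class majorising both $|H(|trcl(\mathfrak{T})|^+)|^M$ and $|H(|trcl(\mathfrak{T})|^+)|^N$; this is possible because both bounds are specific ordinals and can be bounded by their max, which is itself an ordinal shared by $M$ and $N$. Setting $\phi := \exists \mathcal{M}(\mathcal{M} \models^* \mathfrak{T})$, Lemma \ref{lem329} applied in each of $M$ and $N$ yields
\begin{align*}
    M \models \text{``$\mathfrak{T}$ is consistent''} &\iff M \models \text{``$\Vdash_{Col(\omega, \lambda)} \phi$''}, \\
    N \models \text{``$\mathfrak{T}$ is consistent''} &\iff N \models \text{``$\Vdash_{Col(\omega, \lambda)} \phi$''}.
\end{align*}
It therefore suffices to show that $\Vdash_{Col(\omega, \lambda)} \phi$ has the same truth value in $M$ and $N$.

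For this reduction, I would argue as follows. The poset $Col(\omega, \lambda)$ is $\Delta_0$-definable from $\omega$ and $\lambda$, hence is the same set in both $M$ and $N$. The predicate $\mathcal{M} \models^* \mathfrak{T}$ unfolds into a conjunction of local set-theoretic checks on $(\mathcal{M}, \mathfrak{T})$, one per clause in the definition of $\models^*$, and is therefore $\Delta_1$-definable in $\mathsf{ZFC}$. Moreover, for this fixed $\phi$ with only ground-model parameters, the forcing relation $\Vdash_{Col(\omega, \lambda)} \phi$ is given by a definable recursion on the structure of $\phi$ which is $\Delta_1$ over $\mathsf{ZFC}$, and so is absolute between transitive models of $\mathsf{ZFC}$ containing $\lambda$ and $\mathfrak{T}$.

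The hardest part will be handling the case where $M$ and $N$ are incomparable, since standard absoluteness arguments for the forcing relation are usually phrased for models in an inclusion chain. I expect to handle this either by passing (in the meta-theory) through a common transitive extension of $M$ and $N$ and transferring the forcing statement in two steps, or — more elegantly — by exploiting the weak homogeneity of $Col(\omega, \lambda)$: for sentences with only ground-model parameters, $\Vdash \phi$ collapses to the assertion that some, equivalently every, $Col(\omega, \lambda)$-generic extension satisfies $\phi$, which is transferable once the collapse guarantees countable coded witnesses of $\phi$, as it does for $\lambda$ chosen as above.
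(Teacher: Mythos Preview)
Your proposal has a genuine gap at the step where you assert that the forcing relation $\Vdash_{Col(\omega,\lambda)} \exists \mathcal{M}\,(\mathcal{M} \models^* \mathfrak{T})$ is $\Delta_1$ over $\mathsf{ZFC}$ and hence absolute between $M$ and $N$. This is not correct: the recursive clauses defining $\Vdash$ quantify over all $\mathbb{P}$-names and over dense subsets of $\mathbb{P}$, neither of which is a bounded quantifier. In particular, the clause for the outermost existential involves an unbounded search over names, so the relation is not $\Delta_1$, and two transitive models with the same ordinals need not agree on it. Your two suggested workarounds do not close this gap: weak homogeneity of $Col(\omega,\lambda)$ only tells you that within a \emph{single} ground model the truth of $\Vdash \phi$ is independent of the condition, not that two different ground models agree; and the existence of a common transitive outer model of incomparable $M$ and $N$ is itself a nontrivial claim you have not justified.

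The paper avoids this entirely by restricting to the comparable case $V' \subset W$ (which is the standard reading of ``absolute for a class of transitive models'') and by transferring a \emph{model} of $\mathfrak{T}$ rather than the forcing relation. Downward absoluteness is immediate: any outer model of $W$ containing some $\mathcal{M} \models^* \mathfrak{T}$ is already an outer model of $V'$. For upward absoluteness, Lemma~\ref{lem329} applied in $V'$ gives $\Vdash^{V'}_{Col(\omega,\lambda)} \exists \mathcal{M}\,(\mathcal{M} \models^* \mathfrak{T})$ for a suitable $\lambda$ computed in $V'$; one then takes $g$ generic for $\mathbb{P} := Col(\omega,\lambda)^{V'}$ over $W$, observes that $g$ is also $\mathbb{P}$-generic over $V'$ since $V' \subset W$, finds the witnessing $\mathcal{M}$ in $V'[g] \subset W[g]$, and uses absoluteness of the $\Delta_1$ relation $\models^*$ (not of the forcing relation) to conclude $\mathcal{M} \models^* \mathfrak{T}$ in $W[g]$. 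The key move you are missing is that one should pass to a generic extension and carry the actual model across, rather than try to match the forcing relations of the two ground models.
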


\begin{proof}
Let $V'$ and $W$ be transitive models of $\mathsf{ZFC}$ with $ORD^{V'} = ORD^W$ and $\mathfrak{T} \in V' \subset W$. If $\mathfrak{T}$ is consistent in $W$, then $\mathfrak{T}$ has a model in some outer model of $W$. Said outer model is also an outer model of $V'$, so $\mathfrak{T}$ is consistent in $V'$ as well.

Now assume $\mathfrak{T}$ is consistent in $V'$. Letting 
\begin{equation*}
    \lambda := |H(|trcl(\mathfrak{T})|^+)|
\end{equation*} 
evaluated in $V'$, Lemma \ref{lem329} gives us 
\begin{equation*}
    \Vdash_{Col(\omega, \lambda)} \exists \mathcal{M} \ (``\mathcal{M} \models^* \mathfrak{T}")
\end{equation*} 
in $V'$. Note that 
\begin{equation*}
    \mathbb{P} := Col(\omega, \lambda)^{V'}
\end{equation*} 
remains a forcing notion in $W$, so consider $g$ a $\mathbb{P}$-generic filter over $W$. Necessarily, $g$ is also $\mathbb{P}$-generic over $V'$, and further, $V'[g] \subset W[g]$. In $V'[g]$, $\mathfrak{T}$ is forced to have a model --- call it $\mathcal{M}$. Being a model of $\mathfrak{T}$ is absolute for transitive models of $\mathsf{ZFC}$, so $\mathcal{M} \models^* \mathfrak{T}$ holds in $W[g]$ too. Since $W[g]$ is an outer model of $W$, $\mathfrak{T}$ must be consistent in $W$.
\end{proof}

\section{Local Method Definitions}

If we look at forcing as a way to uniformly describe by intension members of a subset of $\mathbf{N}(V)$ for some CTM $V$, where the evaluation of intension is done outside $V$, then we quickly realise that said description can be very simple. We essentially just 
\begin{itemize}
    \item shortlist a class of structures in $V$ --- forcing notions augmented with predicates for their dense subsets --- and
    \item describe how we pick substructures --- generic filters --- of these structures outside $V$. 
\end{itemize} 
Note also that the description of each substructure $\mathfrak{A}'$ involves only information about its associated superstructure $\mathfrak{A}$, so we expect every reasonable universe containing $\mathfrak{A}$ to see that $\mathfrak{A}'$ fits the description. This is analogous to (albeit stronger than) the kind of local definablity one would expect the state transition function of a typical machine model of computation to satisfy.

\subsection{Locally Definable Methods of Small Extensions}

This idea of generating new structures outside the universe based locally on recipes defined in the universe can be formalised rather conveniently using the language of TCIs. 

\begin{defi}\label{def320}
Let $V$ be a CTM. A set $X \subset V$ is a $\mathbf{M}_S(V)$ \emph{method definition of small extensions} (henceforth, $\mathbf{M}_S(V)$ \emph{method definition}) iff it is non-empty and contains only TCIs.
\end{defi}

\begin{defi}\label{def323}
If $V$ is a CTM and $\mathfrak{T} \in V$ is a TCI, then the \emph{evaluation of} $\mathfrak{T}$, denoted $\mathrm{Eval}^V(\mathfrak{T})$, is the set
\begin{equation*}
    \{V[\mathcal{M}] : \exists W \ \exists \mathcal{M} \ (W \in \mathbf{M}(V) \wedge \mathcal{M} \in W \wedge \mathcal{M} \models^* \mathfrak{T})\} \text{.}
\end{equation*}
\end{defi}

\begin{defi}
Let $V$ be a CTM. A set $X \subset V$ is a $\mathbf{M}_S(V)$ \emph{local method definition of small extensions} (henceforth, $\mathbf{M}_S(V)$ \emph{local method definition}) iff it is a $\mathbf{M}_S(V)$ method definition and $X$ is definable (possibly as a proper class) in $V$ with parameters in $V$. 
\end{defi}

Just like how a formula in the language of arithmetic can be used to pick out a Turing degree, we want to have a TCI in $V$ pick out a degree of small extension of $V$. However, it would be absurd to expect any such TCI to isolate a single such degree, due to the non-constructive nature of outer models. As a result, there is no avoiding fuzziness when we evaluate TCIs to obtain degrees. This is the intuition behind $\mathrm{Eval}^V$ naturally taking TCIs to sets, instead of taking TCIs to points.

It might not be immediately obvious that whenever there are $W$ and $\mathcal{M}$ for which $\mathcal{M} \in W \in \mathbf{M}(V)$ and $\mathcal{M} \models^* \mathfrak{T}$ for some TCI $\mathfrak{T} \in V$, $V[\mathcal{M}]$ must exist. The next proposition shows that one can translate between models of a TCI $\mathfrak{T}$ and subsets of a set associated with $\mathfrak{T}$, in an absolute and uniform manner.

\begin{prop}[Lau, \cite{myself}]\label{prop322}
There are formulas $\phi$ and $\psi$ in the language of set theory with the following properties: 
\begin{enumerate}[label=(\arabic*)]
    \item $\phi$ and $\psi$ have two and three free variables respectively,
    \item $\phi$ defines a function from the class of all TCIs into the universe,
    \item $\psi$ defines a function from the class
    \begin{equation*}
        \{(\mathfrak{T}, \mathcal{M}) : \mathfrak{T} \text{ is a TCI and } \mathcal{M} \models^* \mathfrak{T}\}
    \end{equation*}
    into the universe,
    \item $\phi$ and $\psi$ are absolute for transitive models of $\mathsf{ZFC}$,
    \item\label{3245} whenever $\mathfrak{T}$ is a TCI, the relation
    \begin{equation*}
        R_{\mathfrak{T}} := \{(\mathcal{M}, x) : \psi(\mathfrak{T}, \mathcal{M}, x)\}
    \end{equation*}
    is one-one, 
    \item\label{3246} for all $\mathfrak{T}$, $\mathcal{M}$ and $x$, $\psi(\mathfrak{T}, \mathcal{M}, x)$ implies there is $\mathcal{L}$ for which $\phi(\mathfrak{T}, \mathcal{L})$ and $x \subset \mathcal{L}$, and
\end{enumerate}
\end{prop}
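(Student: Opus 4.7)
The plan is to construct $\phi$ and $\psi$ explicitly via a tagged-union coding scheme: $\phi$ will produce a ``container'' $\mathcal{L}$ built by tagging each interpretation-constraint set of $\mathfrak{T}$ by the symbol it constrains, and $\psi$ will return the tagged encoding, inside that container, of the universe and all interpretations of a model $\mathcal{M}$. Because each clause in both definitions will be assembled purely from projection, pairing, Cartesian product, set-builder separation by absolute predicates such as ``$\dot{X}$ is a constant / relation / function symbol of $\sigma$'', and union, absoluteness for transitive models of $\mathsf{ZFC}$ will follow at once.

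For $\phi$, I would let $\phi(\mathfrak{T}, \mathcal{L})$ assert that, writing $\vartheta(\dot{\mathcal{U}}) = (z, a_{\dot{\mathcal{U}}})$ and $\vartheta(\dot{X}) = (y_{\dot{X}}, a_{\dot{X}})$ for each $\dot{X} \in \sigma$,
\[
\mathcal{L} \;=\; \bigl(\{\dot{\mathcal{U}}\} \times z\bigr) \;\cup\; \bigcup_{\dot{X} \in \sigma} \bigl(\{\dot{X}\} \times y_{\dot{X}}\bigr);
\]
this is functional in $\mathfrak{T}$ since the right-hand side is a fixed term in $\mathfrak{T}$. For $\psi$, I would let $\psi(\mathfrak{T}, \mathcal{M}, x)$ assert that $\mathcal{M} = (U; \mathcal{I}) \models^* \mathfrak{T}$ and
\begin{align*}
x \;=\; \bigl(\{\dot{\mathcal{U}}\} \times U\bigr) \;&\cup\; \bigl\{(\dot{X}, \mathcal{I}(\dot{X})) : \dot{X} \in \sigma \text{ a constant symbol}\bigr\} \\
&\cup\; \bigl\{(\dot{X}, t) : \dot{X} \in \sigma \text{ a relation symbol and } t \in \mathcal{I}(\dot{X})\bigr\} \\
&\cup\; \bigl\{(\dot{X}, (t, \mathcal{I}(\dot{X})(t))) : \dot{X} \in \sigma \text{ an $n$-ary function symbol and } t \in U^n\bigr\}.
\end{align*}
Clause by clause, the interpretation constraints in the definition of $\models^*$ force each tagged component of $x$ into the corresponding slice of $\mathcal{L}$, yielding $x \subset \mathcal{L}$, which is condition (6).

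For injectivity I would exhibit the decoder: from $x$ one reads off $U = \{u : (\dot{\mathcal{U}}, u) \in x\}$, and for each $\dot{X} \in \sigma$ one recovers $\mathcal{I}(\dot{X})$ from the slice $\{v : (\dot{X}, v) \in x\}$ according to the (absolutely recognisable) arity and type of $\dot{X}$. Since distinct structures over $\sigma$ must differ in their universe or in at least one interpretation, their tagged codes differ, so $\mathcal{M} \mapsto x$ is injective for fixed $\mathfrak{T}$. All remaining conditions---functionality of both $\phi$ and $\psi$, the prescribed free-variable counts, and absoluteness of the decoder---are immediate from the explicit form of the definitions.

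The main obstacle I expect is largely bookkeeping: the four sub-clauses of $\psi$ must be matched, one at a time, against the four cases in the definition of $\models^*$ (constant, relation, and function symbols, each subject to the flag $a \in \{0, 1\}$), and in each case I need to check both that the tagged image lands in the correct slice of $\mathcal{L}$ and that no non-absolute quantification slips in through the encoding of a function's graph. A more conceptual subtlety is the need to code the universe $U$ separately even when $U \subsetneq z$, which is precisely what motivates reserving a dedicated slice $\{\dot{\mathcal{U}}\} \times z$ of the container for the universe itself; the disjointness conditions on $\vartheta$ in the definition of TCI are not required for this tagged coding, but they would be invoked in an alternative untagged approach.
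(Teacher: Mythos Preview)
Your proposal is correct and establishes all six clauses of the proposition, but it takes a genuinely different route from the paper's own proof. The paper encodes a TCI $\mathfrak{T}$ not by tagged pairs but by a set $\mathcal{L}_{\mathfrak{T}}$ of \emph{atomic sentences}: for each symbol $\dot{X}$ it forms strings such as $\ulcorner \dot{X}(x) \urcorner$ or $\ulcorner \dot{X} = x \urcorner$ as $x$ ranges over the relevant constraint set, takes the union over all symbols (including $\dot{\mathcal{U}}$), and then \emph{closes under negation}. Correspondingly, $\Sigma(\mathfrak{T},\mathcal{M})$ is defined as $(U(\mathcal{M}) \cup Diag(\mathcal{M})) \cap \mathcal{L}_{\mathfrak{T}}$, where $U(\mathcal{M})$ records both the positive facts $\ulcorner \dot{\mathcal{U}}(x) \urcorner$ for $x$ in the universe and the negative facts $\ulcorner \neg\, \dot{\mathcal{U}}(x) \urcorner$ for $x$ outside it, and $Diag(\mathcal{M})$ is the atomic diagram. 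Your encoding, by contrast, records only positive information via set-theoretic pairs and relies on the ambient container $\mathcal{L}$ to recover negatives implicitly.

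Your approach is arguably the more elementary one for the proposition in isolation: it avoids any appeal to syntax and makes the injectivity and containment checks completely transparent. What the paper's choice buys is compatibility with everything downstream. The set $\mathcal{L}_{\mathfrak{T}}$ is later treated as a language fragment in the forcing framework of \cite{myself}, where closure under the canonical negation operator is a structural requirement; the conditions of $\mathbb{P}(\mathfrak{T})$ are finite subsets of $\mathcal{L}_{\mathfrak{T}}$, and the notion of ``$\Gamma(\mathcal{L},\mathfrak{A})$-certifies'' is phrased in terms of the symbol $\ulcorner E \urcorner$ picking out a subset of $\mathcal{L}$. So while your tagged-union scheme proves the proposition as stated, it would need to be retrofitted (or replaced by the paper's syntactic version) before Lemmas~\ref{lem349} and~\ref{lem350} and Theorem~\ref{thm351} could be applied.
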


\begin{proof}
Let $\mathfrak{T}$ be a TCI. Using only information about $\mathfrak{T}$, we will constructively define a set $\mathcal{L}$. Set 
\begin{align*}
    \sigma' := \ & \sigma \cup \{\dot{\mathcal{U}}\} \text{, and} \\
    U := \ & \text{the unique } y \text{ for which there exists } z \text{ such that } \vartheta(\dot{\mathcal{U}}) = (y, z).
\end{align*}
For $\dot{X} \in \sigma'$, define $\mathcal{L}(\dot{X})$ as follows:
\begin{itemize}
    \item if $\dot{X}$ is a constant symbol and $\vartheta(\dot{X}) = (y, z)$, then $$\mathcal{L}(\dot{X}) := \{\ulcorner \dot{X} = x \urcorner : x \in y \cap U\},$$
    \item if $\dot{X}$ is an $n$-ary relation symbol and $\vartheta(\dot{X}) = (y, z)$, then $$\mathcal{L}(\dot{X}) := \{\ulcorner \dot{X}(x) \urcorner : x \in y \cap U^n\}, \text{ and}$$
    \item if $\dot{X}$ is an $n$-ary function symbol and $\vartheta(\dot{X}) = (y, z)$, then $$\mathcal{L}(\dot{X}) := \{\ulcorner \dot{X}(x \! \restriction_n) = x(n) \urcorner : x \in y \cap U^{n+1}\}.$$
\end{itemize}
Then 
\begin{align*}
    \mathcal{L}' := \ & \bigcup \{\mathcal{L}(\dot{X}) : \dot{X} \in \sigma'\} \text{, and} \\
    \mathcal{L} := \ & \text{the closure of }  \mathcal{L}' \text{ under negation.}
\end{align*}
This construction is $\Delta_0$-definable in the language of set theory with a single parameter, $\mathfrak{T}$, so it is absolute for transitive models of $\mathsf{ZFC}$. Use $\phi$ to denote this way of defining $\mathcal{L}$ from $\mathfrak{T}$.

Next let $\mathcal{M} = (M; \mathcal{I})$ be a model of $\mathfrak{T}$. Set 
\begin{equation*}
    U(\mathcal{M}) := \{\ulcorner \dot{\mathcal{U}}(x) \urcorner : x \in M\} \cup \{\ulcorner \neg \ \dot{\mathcal{U}}(x) \urcorner : x \in U \setminus M\} \text{.}
\end{equation*} 
Now define $\psi$ as follows: 
\begin{align*}
    \psi(\mathfrak{T}, \mathcal{M}, \Sigma) \iff (&\mathfrak{T} \text{ is a TCI and }\mathcal{M} \models^* \mathfrak{T} \text{ and} \\
    & \Sigma = (U(\mathcal{M}) \cup Diag(\mathcal{M})) \cap \mathcal{L}_{\mathfrak{T}}) \text{,}
\end{align*} 
where $Diag(\mathcal{M})$ is the atomic diagram of $\mathcal{M}$ and $\mathcal{L}_{\mathfrak{T}}$ is the unique $\mathcal{L}$ for which $\phi(\mathfrak{T}, \mathcal{L})$. Verily, $\psi$ is a $\Delta_1$ formula, because the binary relation $\models^*$ is $\Delta_1$-definable and the set $\mathcal{L}_{\mathfrak{T}}$ is $\Delta_1$-definable with parameter $\mathfrak{T}$. As such, $\psi$ must be absolute for transitive models of $\mathsf{ZFC}$. That \ref{3245} holds is straightforward based on the definition of $\psi$. 
\end{proof}

In the presence of Fact \ref{fact34}, Proposition \ref{prop322} gives validity to Definition \ref{def323}: the function $\mathrm{Eval}^V$ taking TCIs in $V$ to subsets of $\mathbf{M}_S(V)$ actually exists. Fix formulas $\phi$ and $\psi$ as in Proposition \ref{prop322}. Let 
\begin{align*}
    \mathcal{L}_{\mathfrak{T}} := \ & \text{ the unique } \mathcal{L} \text{ for which } \phi(\mathfrak{T}, \mathcal{L}) \text{, and} \\
    \Sigma(\mathfrak{T}, \mathcal{M}) := \ & \text{ the unique } x \text{ for which } \psi(\mathfrak{T}, \mathcal{M}, x) \text{.}
\end{align*}
Given a TCI $\mathfrak{T} = (T, \sigma, \dot{\mathcal{U}}, \vartheta)$ and sets $y, z$ with $\vartheta(\dot{\mathcal{U}}) = (y, z)$, we should think of $\mathcal{L}_{\mathfrak{T}}$ as containing all the possible atomic sentences over $\sigma$ that involve members of $y$ as parameters. Then for any model $\mathcal{M}$ of $\mathfrak{T}$, $\Sigma(\mathfrak{T}, \mathcal{M})$ ($\subset \mathcal{L}_{\mathfrak{T}}$, by \ref{3246} of Proposition \ref{prop322}) can be thought of as the ``$\mathfrak{T}$-specific atomic diagram''  of $\mathcal{M}$. According to \ref{3245} of Proposition \ref{prop322}, we can always recover $\mathcal{M}$ in a transitive model of $\mathsf{ZFC}$ from $\mathfrak{T}$ and $\Sigma(\mathfrak{T}, \mathcal{M})$ alone. As a consequence, 
\begin{equation*}
    V[\mathcal{M}] = V[x] \text{ if } V \text{ is a CTM and } \psi(\mathfrak{T}, \mathcal{M}, x) \text{ for some TCI } \mathfrak{T} \in V \text{.}
\end{equation*}

Observe that we can ``cover'' the entire small outward multiverse with a local method definition.

\begin{prop}\label{prop326}
Let $V$ be a CTM. There is a $\mathbf{M}_S(V)$ local method definition $X$ containing only $\Pi_0$ ($= \Sigma_0$) TCIs such that 
\begin{equation*}
    \bigcup \ ((\mathrm{Eval}^V)" X) = \mathbf{M}_S(V) \text{.}
\end{equation*}
\end{prop}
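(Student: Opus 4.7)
The plan is to parametrise $\Pi_0$ TCIs by the cardinals of $V$, letting each such TCI act as a universal description of the subsets of the corresponding cardinal. For each cardinal $\kappa$ in $V$, I would set
\[ \mathfrak{T}_\kappa := (\emptyset, \{\dot{X}\}, \dot{\mathcal{U}}, \vartheta_\kappa), \]
where $\dot{X}$ is a fixed unary relation symbol distinct from $\dot{\mathcal{U}}$, $\vartheta_\kappa(\dot{\mathcal{U}}) := (\kappa, 1)$ (pinning the universe to $\kappa$), and $\vartheta_\kappa(\dot{X}) := (\kappa, 0)$ ($\dot{X}$ is interpreted as some subset of $\kappa$). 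Because the first-order theory component is empty, $\mathfrak{T}_\kappa$ is vacuously $\Pi_0$ (and $\Sigma_0$). I would then take
\[ X := \{\mathfrak{T}_\kappa : \kappa \in V \text{ and } V \models \text{``}\kappa \text{ is a cardinal''}\}. \]
The class $X$ is non-empty (since $V$, being a CTM of $\mathsf{ZFC}$, contains cardinals), contains only TCIs, and is definable in $V$ without parameters via the assignment $\kappa \mapsto \mathfrak{T}_\kappa$ on cardinals of $V$; thus $X$ is a $\mathbf{M}_S(V)$ local method definition of the prescribed form.

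The inclusion $\bigcup ((\mathrm{Eval}^V)" X) \subset \mathbf{M}_S(V)$ is immediate from the definition of $\mathrm{Eval}^V$. For the reverse inclusion, fix $W \in \mathbf{M}_S(V)$. By Proposition \ref{prop35}, there is a cardinal $\kappa \in V$ and an unbounded $c \in W$ with $c \subset \kappa$ and $W = V[c]$. The structure $\mathcal{M}$ with universe $\kappa$ and $\dot{X}^{\mathcal{M}} := c$ lies in $W \in \mathbf{M}(V)$ and satisfies $\mathcal{M} \models^* \mathfrak{T}_\kappa$ by construction, so $V[\mathcal{M}] \in \mathrm{Eval}^V(\mathfrak{T}_\kappa)$.

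The only step that requires care is verifying that $V[\mathcal{M}] = W$, i.e.\ that passing through the coding machinery of Proposition \ref{prop322} does not introduce or lose any information relative to $c$. By Proposition \ref{prop322} and the remarks following it, $V[\mathcal{M}] = V[\Sigma(\mathfrak{T}_\kappa, \mathcal{M})]$, and $\Sigma(\mathfrak{T}_\kappa, \mathcal{M})$ is $\Delta_1$-definable from $(\mathfrak{T}_\kappa, c, \kappa)$. Conversely, $\mathcal{M}$ (and hence $c = \dot{X}^{\mathcal{M}}$) is uniformly recoverable from $(\mathfrak{T}_\kappa, \Sigma(\mathfrak{T}_\kappa, \mathcal{M}))$ in any transitive model of $\mathsf{ZFC}$ containing both. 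These mutual $\Delta_1$-definability relations force $V[\Sigma(\mathfrak{T}_\kappa, \mathcal{M})] = V[c] = W$. I do not expect a genuine obstacle here, since the intricate set-theoretic work has already been packaged into Propositions \ref{prop35} and \ref{prop322}; the bulk of the proof is simply checking that the trivially chosen TCI $\mathfrak{T}_\kappa$ slots into that framework correctly.
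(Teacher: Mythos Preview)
Your argument is correct and follows essentially the same strategy as the paper: produce a family of TCIs with empty first-order theory (hence trivially $\Pi_0$) whose models range over all subsets of a fixed set in $V$, and then invoke the characterisation of small extensions as being generated by such subsets.

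The only difference is cosmetic. The paper takes the even more minimal route of using the \emph{empty} signature and letting the base set itself vary: it sets $\vartheta_s(\dot{\mathcal{U}}) := (s,0)$, so that a model of $\mathfrak{T}_s$ is nothing more than a structure with universe contained in $s$ and no interpreted symbols. This way the model literally \emph{is} the subset, and the identification $V[\mathcal{M}] = V[x]$ is immediate without having to unwind Proposition~\ref{prop322}. Your version instead pins the universe to $\kappa$ and encodes the subset via an auxiliary unary relation $\dot{X}$; this costs you the extra (but routine) verification that $V[\mathcal{M}] = V[c]$, which you handle correctly. Your restriction to cardinals (rather than all sets $s \in V$) is harmless and, as you note, already justified by Proposition~\ref{prop35}.
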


\begin{proof}
Fix a distinguished unary relation symbol $\dot{\mathcal{U}}$ and let $s \in V$. Define 
\begin{gather*}
    \vartheta_s := \{(\dot{\mathcal{U}}, (s, 0))\} \\
    \mathfrak{T}_s := (\emptyset, \emptyset, \dot{\mathcal{U}}, \vartheta_s) \text{.}
\end{gather*}
Then $\mathfrak{T}_s$ is a $\Pi_0$ TCI, and its models in any outer model $W$ of $V$ are exactly the subsets of $s$ in $W$. We are done by Fact \ref{fact34} if we set $X := \{\mathfrak{T}_s : s \in V\}$.
\end{proof}

To kickstart our journey of relating set-theoretic forcing to TCIs, let us introduce the notion of a generic model of a TCI. For future referencing in service of the ease of expression and reading, we define the abbreviation
\begin{equation*}
    \mathfrak{A}_{\mathfrak{T}} := (H(|trcl(\mathfrak{T})|^+); \in) \text{.}
\end{equation*}

\begin{ob}\label{ob331}
It is easily verifiable from the definitions of $\mathcal{L}_{\mathfrak{T}}$ and $\mathfrak{A}_{\mathfrak{T}}$ that $\mathcal{L}_{\mathfrak{T}} \in \mathfrak{A}_{\mathfrak{T}}$, so we have $[\mathcal{L}_{\mathfrak{T}}]^{< \omega} \in \mathfrak{A}_{\mathfrak{T}}$ as well.
\end{ob}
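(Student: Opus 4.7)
The plan is to reduce the observation to a direct cardinal-arithmetic check. By the standard characterisation of $H(\kappa^+)$, membership in $\mathfrak{A}_{\mathfrak{T}} = H(|trcl(\mathfrak{T})|^+)$ amounts to having hereditary cardinality $\leq \kappa$, where I set $\kappa := |trcl(\mathfrak{T})|$. Both desired conclusions thus reduce to showing $|trcl(\mathcal{L}_{\mathfrak{T}})| \leq \kappa$ and $|trcl([\mathcal{L}_{\mathfrak{T}}]^{<\omega})| \leq \kappa$.

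For the first bound I would unpack the explicit construction of $\mathcal{L}_{\mathfrak{T}}$ given in the proof of Proposition \ref{prop322}. The only raw ingredients used are the signature $\sigma$, the distinguished symbol $\dot{\mathcal{U}}$, and the universe $U$ read off as the first coordinate of $\vartheta(\dot{\mathcal{U}})$. Each of $\sigma$, $\vartheta$, and (hence) $U$ sits in $trcl(\mathfrak{T})$ both as an element and as a subset, so $|\sigma|, |U| \leq \kappa$. Every member of $\mathcal{L}_{\mathfrak{T}}$ is an atomic expression of one of the shapes $\ulcorner \dot{X} = x \urcorner$, $\ulcorner \dot{X}(x) \urcorner$, or $\ulcorner \dot{X}(x\!\restriction_n) = x(n) \urcorner$, or the negation of such, with $x$ drawn from some finite product $U^n$. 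Each such expression is encoded (via Kuratowski tuples) as a finite set over symbols in $\sigma \cup \{\dot{\mathcal{U}}\}$ and parameters in $U$, so its transitive closure is contained in $trcl(\sigma \cup \{\dot{\mathcal{U}}\} \cup U) \cup \omega \subset trcl(\mathfrak{T}) \cup \omega$ and has cardinality $\leq \kappa$ (taking $\kappa$ to be infinite, which is the only non-degenerate case). Since the total number of such expressions is bounded by $|\sigma| \cdot \sup_n |U|^n \cdot 2 \leq \kappa$, we conclude $|trcl(\mathcal{L}_{\mathfrak{T}})| \leq \kappa$, hence $\mathcal{L}_{\mathfrak{T}} \in \mathfrak{A}_{\mathfrak{T}}$.

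For the second bound, once $|\mathcal{L}_{\mathfrak{T}}| \leq \kappa$ is in hand, $[\mathcal{L}_{\mathfrak{T}}]^{<\omega}$ has at most $\kappa^{<\omega} = \kappa$ elements, and each of its elements is a finite subset of $\mathcal{L}_{\mathfrak{T}}$ whose transitive closure is contained in $trcl(\mathcal{L}_{\mathfrak{T}})$. The transitive closure of $[\mathcal{L}_{\mathfrak{T}}]^{<\omega}$ is thus the union of $\leq \kappa$ sets of size $\leq \kappa$, which has cardinality $\leq \kappa$ as required.

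The only potential obstacle is a trivial one: confirming that the coding of atomic expressions contributes only a bounded finite amount to the transitive closure of each individual expression, which is immediate for any standard finite-tuple encoding. Otherwise the argument is pure bookkeeping, justifying the author's description of the claim as easily verifiable.
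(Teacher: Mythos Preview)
Your proposal is correct and is precisely the routine cardinal-arithmetic verification the paper leaves to the reader; the paper itself gives no proof beyond the phrase ``easily verifiable from the definitions''. Your unpacking of the construction of $\mathcal{L}_{\mathfrak{T}}$ from Proposition~\ref{prop322} and the hereditary-cardinality bookkeeping are exactly what is needed, and your caveat about finite-tuple encodings contributing only boundedly many extra sets is the one point that requires any care.
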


\begin{defi}
A triple $(\mathfrak{T}, \mathfrak{A}, \mathbb{P})$ is \emph{generically sensible} iff 
\begin{itemize}
    \item $\mathfrak{T}$ is a TCI,
    \item $\mathfrak{A} = (A; \in, \Vec{R})$ is a structure expanding on a transitive model of $\mathsf{ZFC - Powerset}$,
    \item $\mathbb{P}$ is a forcing notion, and
    \item $\{\mathfrak{T}, \mathbb{P}\} \subset A$.
\end{itemize}
\end{defi}

It is clear that $(\mathfrak{T}, \mathfrak{A}_{\mathfrak{T}}, \mathbb{P})$ is a generically sensible triple.

\begin{defi}[Lau, \cite{myself}]
Given a generically sensible triple $(\mathfrak{T}, \mathfrak{A}, \mathbb{P})$, a $(\mathbb{P}, \mathfrak{A})$\emph{-generic model} of $\mathfrak{T}$ is a model $\mathcal{M}$ of $\mathfrak{T}$ satisfying 
\begin{equation*}
    \Sigma(\mathfrak{T}, \mathcal{M}) = (\bigcup g) \cap \mathcal{L}_{\mathfrak{T}}
\end{equation*} 
for some $\mathbb{P}$-generic filter $g$ over $\mathfrak{A}$. In this case, we say $g$ \emph{witnesses} $\mathcal{M}$ \emph{is a} $(\mathbb{P}, \mathfrak{A})$-\emph{generic model of} $\mathfrak{T}$. We say $g$ \emph{witnesses a} $(\mathbb{P}, \mathfrak{A})$-\emph{generic model of} $\mathfrak{T}$ iff for some (necessarily unique) $\mathcal{M}$, $g$ witnesses $\mathcal{M}$ is a $(\mathbb{P}, \mathfrak{A})$-generic model of $\mathfrak{T}$.

We call $\mathcal{M}$ a $\mathfrak{A}$\emph{-generic model} of $\mathfrak{T}$ iff for some $\mathbb{P}$, $(\mathfrak{T}, \mathfrak{A}, \mathbb{P})$ is generically sensible and $\mathcal{M}$ is a $(\mathbb{P}, \mathfrak{A})$-generic model of $\mathfrak{T}$.

We call $\mathcal{M}$ a \emph{generic model} of $\mathfrak{T}$ iff for some $\mathfrak{A}$ and $\mathbb{P}$, $(\mathfrak{T}, \mathfrak{A}, \mathbb{P})$ is generically sensible and $\mathcal{M}$ is a $(\mathbb{P}, \mathfrak{A})$-generic model of $\mathfrak{T}$.   
\end{defi}

\begin{ob}[Lau, \cite{myself}]\label{smallvgen}
\leavevmode
\begin{enumerate}[label=(\arabic*)]
    \item\label{5282} If $g$ witnesses $\mathcal{M}$ is a $(\mathbb{P}, V)$-generic model of $\mathfrak{T}$, and $\bigcup g \subset \mathcal{L}_{\mathfrak{T}}$, then $V[g] = V[\mathcal{M}]$.
    \item\label{2svg} In the same vein as Observation \ref{ob0}, we see that given any consistent $\Pi_2$ TCI $\mathfrak{T}$, 
    \begin{align*}
        \forall x \ ( & x \text{ is a } (\mathbb{P}(\mathfrak{T}), \mathfrak{A}_{\mathfrak{T}}) \text{-generic model of } \mathfrak{T} \\
        & \iff x \text{ is a } (\mathbb{P}(\mathfrak{T}), V) \text{-generic model of } \mathfrak{T})
    \end{align*}
   in every outer model of $V$. As a result, we can safely talk about $(\mathbb{P}(\mathfrak{T}), V)$-generic models of $\mathfrak{T}$ without the need to quantify over all sets.
\end{enumerate}
\end{ob}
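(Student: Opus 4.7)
For part (1), the plan is to invoke the absoluteness and injectivity properties of the coding function established in Proposition \ref{prop322}. Since $g$ witnesses $\mathcal{M}$ is a $(\mathbb{P}, V)$-generic model of $\mathfrak{T}$, by definition $\Sigma(\mathfrak{T}, \mathcal{M}) = (\bigcup g) \cap \mathcal{L}_{\mathfrak{T}}$, which under the hypothesis $\bigcup g \subset \mathcal{L}_{\mathfrak{T}}$ simplifies to $\Sigma(\mathfrak{T}, \mathcal{M}) = \bigcup g$. The defining formula $\psi$ is $\Delta_1$ and hence absolute across transitive models of $\mathsf{ZFC}$, and by the one-oneness of $R_{\mathfrak{T}}$ noted in \ref{3245} of Proposition \ref{prop322}, $\mathcal{M}$ is the unique set such that $\psi(\mathfrak{T}, \mathcal{M}, \bigcup g)$ holds. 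Hence $\mathcal{M}$ and $\bigcup g$ are mutually recoverable from each other using $\mathfrak{T} \in V$, giving $V[\mathcal{M}] = V[\bigcup g]$.

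To conclude $V[g] = V[\mathcal{M}]$, I would then verify $V[g] = V[\bigcup g]$. The containment $V[\bigcup g] \subseteq V[g]$ is immediate from $\bigcup g \in V[g]$. For the reverse, the key observation is that $g$ can be read off from $\bigcup g$ using only parameters in $V$: within the forcing framework from \cite{myself} that justifies the notion of a generic model, the conditions of $\mathbb{P}$ sit inside (finite subsets of) $\mathcal{L}_{\mathfrak{T}}$ in a canonical way, so $g$ can be reconstructed in $V[\bigcup g]$ via a uniform decoding procedure applied to $\bigcup g$ and $\mathbb{P}$.

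For part (2), the plan is to mirror the reflection argument behind Observation \ref{ob0}. Given a consistent $\Pi_2$ TCI $\mathfrak{T}$, I want to show that every dense subset of $\mathbb{P}(\mathfrak{T})$ in $V$ whose meeting is required to witness a generic model of $\mathfrak{T}$ can in fact be taken to be definable over $\mathfrak{A}_{\mathfrak{T}}$. The $\Pi_2$ structure of the underlying theory $T$ means that checking $\mathcal{M} \models^* \mathfrak{T}$ reduces to verifying, for each universal instance, that a suitable existential witness occurs in $\mathcal{M}$; since the underlying set of $\mathcal{M}$ is a subset of the set $U$ from $\vartheta(\dot{\mathcal{U}})$, and both $U$ and $\mathcal{L}_{\mathfrak{T}}$ lie inside $\mathfrak{A}_{\mathfrak{T}}$ (using Observation \ref{ob331}), all the relevant conditions and density requirements live inside $\mathfrak{A}_{\mathfrak{T}}$ and are definable over it. A filter meeting all such dense sets therefore witnesses a generic model of $\mathfrak{T}$ in any outer model of $V$, proving the two notions of genericity coincide on generic models of $\mathfrak{T}$.

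The main obstacle will be the verification in part (2) that the $\Pi_2$ restriction genuinely suffices to localise all relevant density requirements at the $\mathfrak{A}_{\mathfrak{T}}$ level. This requires tracking the quantifier complexity of the axioms of $T$ through the combinatorial structure of $\mathbb{P}(\mathfrak{T})$ defined in \cite{myself}, and checking that the existential witnesses demanded by $T$ can always be supplied by finite conditions drawn from within $\mathfrak{A}_{\mathfrak{T}}$. Part (1), by comparison, is largely a bookkeeping exercise once Proposition \ref{prop322} is applied and the canonical decoding of $g$ from $\bigcup g$ is spelled out.
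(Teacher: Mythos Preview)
The paper does not supply a proof of this statement; it is recorded as an Observation attributed to \cite{myself} and left without argument. Your proposal is therefore not being compared against any proof in the present paper, and your sketch is the natural unpacking of why the observation holds: Proposition \ref{prop322} gives the mutual recoverability of $\mathcal{M}$ and $\Sigma(\mathfrak{T},\mathcal{M})=\bigcup g$, and for part (2) the localisation of all relevant dense sets to $\mathfrak{A}_{\mathfrak{T}}$ via the $\Pi_2$ bound is exactly in the spirit of Observation \ref{ob0}.

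One point deserves a caution. In part (1) you pass from $V[\bigcup g]$ to $V[g]$ by saying that ``the conditions of $\mathbb{P}$ sit inside (finite subsets of) $\mathcal{L}_{\mathfrak{T}}$ in a canonical way''. As literally stated, the observation allows an arbitrary $\mathbb{P}$ subject only to $\bigcup g\subset\mathcal{L}_{\mathfrak{T}}$, and for general $\mathbb{P}$ there is no reason $g$ should be reconstructible from $\bigcup g$. The recovery you describe is precisely the one carried out later in the paper in the proof of Lemma \ref{lem344}, and it genuinely uses that conditions are finite and ordered by reverse inclusion (so that $g=[\bigcup g]^{<\omega}\cap P$). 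In the framework of \cite{myself} the relevant $\mathbb{P}$ always has this form, so your argument is adequate in context; but you should flag explicitly that you are invoking this structural assumption on $\mathbb{P}$, rather than suggesting it follows for arbitrary $\mathbb{P}$ from the hypothesis $\bigcup g\subset\mathcal{L}_{\mathfrak{T}}$ alone.
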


Our definition of a generic model is naturally arises from a nice characterisation of what we expect a generic model to be.

\begin{lem}\label{gmodelsinfe}
Let $\mathfrak{T}$ be a TCI. If $\mathcal{M}$ is a model of $\mathfrak{T}$ in some forcing extension of $V$, then $\mathcal{M}$ is a $V$-generic model of $\mathfrak{T}$.
\end{lem}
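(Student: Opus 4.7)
The plan is to exploit the forcing $\mathbb{Q}$ that produces the given extension $V[G] \ni \mathcal{M}$, and repackage it into a forcing $\mathbb{P} \in V$ whose conditions carry finite approximations to $\Sigma := \Sigma(\mathfrak{T}, \mathcal{M})$. First I would fix $\mathbb{Q} \in V$ and a $\mathbb{Q}$-generic filter $G$ over $V$ with $\mathcal{M} \in V[G]$, along with a $\mathbb{Q}$-name $\dot{\mathcal{M}}$ for $\mathcal{M}$. By the $\Delta_1$-definability of $\psi$ in Proposition \ref{prop322}, there is a canonical $\mathbb{Q}$-name $\dot{\Sigma} \in V$ with $\dot{\Sigma}[G] = \Sigma$.

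Fix any set $\star$ with $(q,\star) \notin \mathcal{L}_{\mathfrak{T}}$ for all $q \in \mathbb{Q}$, and define in $V$ the forcing
\begin{equation*}
    \mathbb{P} := \{s \cup \{(q,\star)\} : q \in \mathbb{Q},\ s \in [\mathcal{L}_{\mathfrak{T}}]^{<\omega},\ q \Vdash_{\mathbb{Q}} \check{s} \subseteq \dot{\Sigma}\},
\end{equation*}
ordered by $s' \cup \{(q',\star)\} \leq s \cup \{(q,\star)\}$ iff $q' \leq_{\mathbb{Q}} q$ and $s' \supseteq s$. The tag $(q,\star)$ keeps the $\mathbb{Q}$-bookkeeping coordinate out of $\mathcal{L}_{\mathfrak{T}}$, so the formula-content of each condition sits literally as a subset of $\mathcal{L}_{\mathfrak{T}}$ inside it. Set $g := \{s \cup \{(q,\star)\} \in \mathbb{P} : q \in G \text{ and } s \subseteq \Sigma\} \in V[G]$. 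A short calculation shows $g$ is a filter: given two of its conditions, a common $\mathbb{Q}$-refinement from $G$ paired with the union of the two $s$'s (still forced into $\dot{\Sigma}$ by the refined condition) is the required lower bound.

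For $\mathbb{P}$-genericity of $g$ over $(V;\in)$, I would use the projection $D \mapsto D^* := \{q \in \mathbb{Q} : \exists s,\ s \cup \{(q,\star)\} \in D\}$, which sits in $V$. Density of $D^*$ in $\mathbb{Q}$ follows from observing that $\{(q_0,\star)\} \in \mathbb{P}$ for every $q_0 \in \mathbb{Q}$, and any such condition admits a $\mathbb{P}$-extension in $D$ by density of $D$. Then $\mathbb{Q}$-genericity of $G$ supplies some $q \in G \cap D^*$; the defining property of $\mathbb{P}$ together with $q \in G$ forces the associated $s$ inside $\Sigma$, placing $s \cup \{(q,\star)\}$ in $g \cap D$.

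The equation $\bigcup g \cap \mathcal{L}_{\mathfrak{T}} = \Sigma$ is then immediate: any element of the left-hand side comes from some $s \subseteq \Sigma$ by definition of $g$; conversely, each $\phi \in \Sigma$ is witnessed, by Theorem \ref{thm310}, by some $q \in G$ forcing $\ulcorner \check{\phi} \in \dot{\Sigma} \urcorner$, giving $\{\phi\} \cup \{(q,\star)\} \in g$. The chief obstacle throughout is the condition-encoding: without the $(q,\star)$ tag, $\bigcup g$ would absorb $\mathbb{Q}$-conditions and spoil the projection; the tag is what keeps the formula-content cleanly isolated in $\mathcal{L}_{\mathfrak{T}}$.
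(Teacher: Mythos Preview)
Your argument is correct. The paper itself does not give a proof here---it simply cites Lemma~5.30 of \cite{myself}---so a line-by-line comparison is not possible, but your construction is the natural one and is self-contained.

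A few remarks on the details, all of which you handle properly. The encoding $s \cup \{(q,\star)\}$ with $(q,\star) \notin \mathcal{L}_{\mathfrak{T}}$ is exactly what is needed so that the condition is literally a finite set whose intersection with $\mathcal{L}_{\mathfrak{T}}$ is $s$; this is what makes $(\bigcup g) \cap \mathcal{L}_{\mathfrak{T}} = \Sigma$ come out cleanly, as the definition of $(\mathbb{P},V)$-generic model requires. The density argument is sound: $\emptyset \cup \{(q_0,\star)\} \in \mathbb{P}$ for every $q_0$, and extending in $\mathbb{P}$ automatically extends the $\mathbb{Q}$-coordinate, so $D^*$ is dense in $\mathbb{Q}$; genericity of $G$ then hands you $q \in G \cap D^*$, and since any witness $s$ to $q \in D^*$ satisfies $q \Vdash_{\mathbb{Q}} \check{s} \subseteq \dot{\Sigma}$, the forcing theorem gives $s \subseteq \Sigma$ and hence $s \cup \{(q,\star)\} \in g \cap D$. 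The filter check is also fine: a common $\mathbb{Q}$-refinement $q \leq q_1, q_2$ from $G$ forces both $\check{s_1}$ and $\check{s_2}$ into $\dot{\Sigma}$, hence forces their union in as well, and $(s_1 \cup s_2) \cup \{(q,\star)\}$ lands in $g$.

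One could alternatively work with ordered pairs $(s,q)$ as conditions and then post-process, but your tagging trick avoids that detour and matches the literal $\bigcup g \cap \mathcal{L}_{\mathfrak{T}}$ clause in the definition directly.
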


\begin{proof}
This is Lemma 5.30 of \cite{myself}.
\end{proof}

Lemma \ref{gmodelsinfe} actually tells us that $V$-generic models of a TCI $\mathfrak{T}$ are exactly the models of $\mathfrak{T}$ found in some forcing (i.e. generic) extension of $V$.

We end off this subsection by demonstrating that forcing can be regarded as a local method definition.

Unless stated otherwise, we work within a fixed CTM $V$ for the rest of this section, so that all mentions of $\mathbf{M}_S(V)$ in (local) method definitions can be conveniently suppressed.

\begin{prop}\label{prop323}
Let $\mathbb{P} = (P, \leq_{\mathbb{P}})$ be a partial order. Then there is a TCI $\mathfrak{T}(\mathbb{P}) = (T, \sigma, \dot{\mathcal{U}}, \vartheta)$ such that for a fixed unary relation symbol $\dot{X} \in \sigma$, if $\mathcal{M}$ is any set in an outer model of $V$, then
\begin{equation*}
    \mathcal{M} \models^* \mathfrak{T}(\mathbb{P}) \iff \{p : \mathcal{M} \models \dot{X}(p)\} \text{ is a } \mathbb{P} \text{-generic filter over } V \text{.}
\end{equation*}
\end{prop}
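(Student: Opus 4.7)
The plan is to construct $\mathfrak{T}(\mathbb{P})$ so that its theory directly axiomatises ``$\dot{X}$ picks out a $\mathbb{P}$-generic filter over $V$'', using the interpretation-constraint map $\vartheta$ to hard-code $\mathbb{P}$ itself and -- crucially -- every dense subset of $\mathbb{P}$ living in $V$. Since $V$ is a CTM and $\mathcal{P}(P) \cap V$ is a set in $V$, the collection of dense subsets of $\mathbb{P}$ is a set, so a signature containing one symbol per dense subset is legitimately a set in $V$.

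Concretely, I would take the signature $\sigma$ to consist of a unary symbol $\dot{P}$, a binary symbol $\dot{\leq}$, the distinguished unary $\dot{X}$, and one further unary symbol $\dot{D}$ for every $D \in V$ that is dense in $\mathbb{P}$. The interpretation constraints would be $\vartheta(\dot{\mathcal{U}}) = (P, 1)$ (after, if necessary, replacing $P$ by a tagged copy that satisfies the technical condition $P \cap P^n = \emptyset$ for $n > 1$), $\vartheta(\dot{P}) = (P, 1)$, $\vartheta(\dot{\leq}) = (\leq_{\mathbb{P}}, 1)$, $\vartheta(\dot{X}) = (P, 0)$, and $\vartheta(\dot{D}) = (D, 1)$ for each dense $D$. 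The theory $T$ will contain the usual first-order filter axioms for $\dot{X}$ phrased with $\dot{\leq}$ (non-emptiness, upward closure, and downward directedness), together with one ``meeting'' axiom $\exists x\,(\dot{D}(x) \wedge \dot{X}(x))$ per dense $D$. All these are $\Pi_2$ sentences or simpler, so $T$ is a set in $V$ by replacement.

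To verify the ``$\Leftarrow$'' direction, I would start with a $\mathbb{P}$-generic filter $g$ over $V$ in some outer model $W$, and inside $W$ define $\mathcal{M}$ by interpreting each symbol exactly as $\vartheta$ demands, setting $\dot{X}^{\mathcal{M}} := g$. All constraints are met by construction; the filter axioms hold because $g$ is a filter on $(P, \leq_{\mathbb{P}})$, and each meeting axiom holds because genericity forces $g \cap D \neq \emptyset$ for every dense $D \in V$. For ``$\Rightarrow$'', given $\mathcal{M} \models^* \mathfrak{T}(\mathbb{P})$, the $\vartheta$-clauses with second coordinate $1$ pin down $\dot{P}^{\mathcal{M}} = P$ and $\dot{\leq}^{\mathcal{M}} = \leq_{\mathbb{P}}$, while $\vartheta(\dot{X}) = (P, 0)$ ensures $g := \{p : \mathcal{M} \models \dot{X}(p)\} \subseteq P$. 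The filter axioms then translate verbatim into ``$g$ is a filter of $\mathbb{P}$'', and the meeting axioms translate into ``$g \cap D \neq \emptyset$ for every dense $D \in V$'', which together say precisely that $g$ is $\mathbb{P}$-generic over $V$.

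The only genuine obstacle is the bookkeeping around the interpretation-constraint map: we need the second component $z = P$ of $\vartheta(\dot{\mathcal{U}})$ to be disjoint from all its higher Cartesian powers, which in pathological cases (where members of $P$ happen to themselves be ordered pairs of members of $P$) forces a preliminary replacement of $P$ by a tagged isomorphic copy, with $\leq_{\mathbb{P}}$ and each dense $D$ relocated correspondingly. Once this technicality is dispatched, the construction of $\mathfrak{T}(\mathbb{P})$ is explicit and both directions reduce to unwinding the definition of $\models^*$.
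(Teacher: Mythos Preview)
Your proposal is correct and essentially identical to the paper's construction: the paper likewise fixes the universe to $P$ via $\vartheta(\dot{\mathcal{U}}) = (P,1)$, hard-codes $\leq_{\mathbb{P}}$ and every dense $D \in V$ with constraint $(\cdot,1)$, lets the generic-filter symbol (there called $\dot{G}$) have constraint $(P,0)$, and takes $T$ to consist of upward closure, downward directedness, and one meeting axiom per dense set. The only notable difference is that you flag the technical requirement $P \cap P^n = \emptyset$ and propose a tagging fix, whereas the paper's proof passes over this point in silence.
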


\begin{proof}
This is found in the proof of Theorem 5.34 of \cite{myself}, but we reproduce it the key portions here. 

Choose $\dot{\mathcal{U}}$, $\dot{\leq}$ and $\dot{G}$ and to be distinct relation symbols of arities $1$, $2$ and $1$ respectively. For each dense subset $D$ of $\mathbb{P}$, choose a fresh unary relation symbol $\dot{D}$. Set $\sigma$ to be $$\{\dot{\leq}, \dot{G}\} \cup \{\dot{D} : D \text{ is a dense subset of } \mathbb{P}\} \text{.}$$ We define $\vartheta$ on $\{\dot{\mathcal{U}}\} \cup \sigma$ as follows:
\begin{align*}
    \vartheta(\dot{\mathcal{U}}) := \ & (P, 1) \\
    \vartheta(\dot{\leq}) := \ & (\leq_{\mathbb{P}}, 1) \\
    \vartheta(\dot{G}) := \ & (P, 0) \\
    \vartheta(\dot{D}) := \ & (D, 1) \text{ for each dense subset } D \text{ of } \mathbb{P} \text{.}
\end{align*}
Now, have $T$ contain only the sentences
\begin{gather*}
    \ulcorner \forall p \ \forall q \ \exists r \ ((\dot{G}(p) \wedge \dot{G}(q)) \implies (\dot{G}(r) \wedge \dot{\leq}(r, p) \wedge \dot{\leq}(r, q))) \urcorner \text{,} \\
    \ulcorner \forall p \ \forall q \ ((\dot{\leq}(p, q) \wedge \dot{G}(p)) \implies \dot{G}(q)) \urcorner \text{, as well as all members of} \\
    \{\ulcorner \exists p \ (\dot{G}(p) \wedge \dot{D}(p)) \urcorner : D \text{ is a dense subset of } \mathbb{P}\} \text{.}
\end{gather*} 
Let $\mathfrak{T}(\mathbb{P}) := (T, \sigma, \dot{\mathcal{U}}, \vartheta)$. It is clear from our definition of $\mathfrak{T}(\mathbb{P})$ that whenever $\mathcal{M} \models^* \mathfrak{T}(\mathbb{P})$, the set $\{p : \mathcal{M} \models \dot{X}(p)\}$ is a $\mathbb{P}$-generic filter over $V$, and vice versa.
\end{proof}

\begin{cor}\label{cor324}
Forcing is expressible as a local method definition.
\end{cor}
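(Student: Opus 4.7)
The plan is to package the TCIs $\mathfrak{T}(\mathbb{P})$ constructed in Proposition \ref{prop323} into a single class, definable in $V$, whose evaluation covers all forcing extensions of $V$. The candidate is
\begin{equation*}
X := \{\mathfrak{T}(\mathbb{P}) : \mathbb{P} \in V \text{ is a forcing notion}\} \text{,}
\end{equation*}
and the task reduces to checking three things: $X$ is a method definition (in the sense of Definition \ref{def320}), $X$ is definable in $V$, and $X$ faithfully encodes forcing.

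For the first two clauses, $X$ is plainly non-empty (take the trivial partial order), and by Proposition \ref{prop323} every element of $X$ is a TCI. For definability I would trace through the construction in the proof of Proposition \ref{prop323} to confirm that every ingredient of $\mathfrak{T}(\mathbb{P})$ is computed from data visible in $V$: the signature uses one symbol per dense subset of $\mathbb{P}$, and $\mathcal{P}(\mathbb{P})^V$ already enumerates these in $V$; the interpretation constraint map $\vartheta$ is defined directly from $P$, $\leq_{\mathbb{P}}$, and the dense subsets of $\mathbb{P}$; and the theory $T$ consists of two fixed axioms together with one sentence per dense subset. Hence ``$\tau = \mathfrak{T}(\mathbb{P})$'' is first-order expressible in $V$ with parameter $\mathbb{P}$, and $X$ is a (proper class) definable subclass of $V$.

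For the evaluation claim, I would verify
\begin{equation*}
\bigcup \ ((\mathrm{Eval}^V)"X) = \mathbf{M}_F(V) \text{.}
\end{equation*}
The $(\supset)$ direction goes: given $\mathbb{P} \in V$ and a $\mathbb{P}$-generic filter $g$ over $V$ living in some $W \in \mathbf{M}(V)$, the structure $\mathcal{M}$ on $P$ whose interpretation of $\dot{G}$ is $g$ (with the other symbols interpreted canonically) is a model of $\mathfrak{T}(\mathbb{P})$ in $W$ by Proposition \ref{prop323}, and Observation \ref{smallvgen}\ref{5282} gives $V[\mathcal{M}] = V[g]$, so $V[g] \in \mathrm{Eval}^V(\mathfrak{T}(\mathbb{P}))$. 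The $(\subset)$ direction uses Proposition \ref{prop323} in reverse, together with Lemma \ref{gmodelsinfe}: any $\mathcal{M} \models^* \mathfrak{T}(\mathbb{P})$ in some outer model of $V$ yields a $\mathbb{P}$-generic filter $g$ over $V$, and again $V[\mathcal{M}] = V[g] \in \mathbf{M}_F(V)$.

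The only non-mechanical step is the bookkeeping for definability, specifically ensuring that the dense-subset-indexed symbols in the signature of $\mathfrak{T}(\mathbb{P})$ are chosen \emph{uniformly} in $\mathbb{P}$. This is handled by fixing, once and for all in $V$, a $\Delta_0$ coding scheme for fresh symbols --- for instance, letting the symbol associated to a dense set $D$ literally be a canonical tagged pair involving $D$ --- so that the whole construction $\mathbb{P} \mapsto \mathfrak{T}(\mathbb{P})$ becomes absolute. Once this coding is pinned down, the corollary is immediate from Proposition \ref{prop323}.
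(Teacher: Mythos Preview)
Your argument is correct and follows the paper's approach: both observe that the construction of $\mathfrak{T}(\mathbb{P})$ in Proposition~\ref{prop323} can be made uniform by fixing the symbols once and for all, so that $\mathbb{P} \mapsto \mathfrak{T}(\mathbb{P})$ is definable in $V$ and its range is a local method definition. The paper's proof stops there; your verification that $\bigcup ((\mathrm{Eval}^V)"X) = \mathbf{M}_F(V)$ is extra content not required by the bare statement of the corollary, and in that portion your citations of Observation~\ref{smallvgen}\ref{5282} and Lemma~\ref{gmodelsinfe} are slightly off target (those results concern generic models of a TCI $\mathfrak{T}$ witnessed by filters on an auxiliary poset, not models of $\mathfrak{T}(\mathbb{P})$ versus $\mathbb{P}$-generic filters), though the underlying fact $V[\mathcal{M}] = V[g]$ you need is immediate from the interdefinability of $\mathcal{M}$ and $g$ over $V$.
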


\begin{proof}
The proof of Proposition \ref{prop323} is constructive, and can be made uniform across all possible $\mathbb{P}$ by choosing the symbols $\dot{\mathcal{U}}$, $\dot{\leq}$ and $\dot{G}$ in advance. This allows the function 
\begin{equation*}
    (F : \{\text{forcing notions}\} \longrightarrow V) [\mathbb{P} \mapsto \mathfrak{T}(\mathbb{P})]
\end{equation*}
to be definable in $V$. Obviously, $dom(F)$ is definable in $V$, so $ran(F)$ must be as well. 
\end{proof}

Let us choose a definable function $F$ as in the proof of Corollary \ref{cor324} and name it $\mathfrak{T}(\cdot)$, for later use and reference. Also, we shall use $\mathsf{Fg}$ to denote the local method definition of forcing; in other words,  $\mathsf{Fg} := ran(\mathfrak{T}(\cdot))$.

\subsection{Complexity of Local Method Definitions}\label{ss32}

What does it mean for a definition to be complex? Long, overwrought, convoluted. These are just some synonyms that may come to mind. in general, a more complex definition places more requirements on the object, or the class of objects, it defines. In the former scenario, it makes the object it defines \textit{a priori} less likely to exist; in the latter one, it defines a compratively smaller class of objects. Following this intuition, we formalise a way of comparing between two local method definitions.

\begin{defi}
Let $X, Y$ be local method definitions of a CTM $V$. We use 
\begin{enumerate}[label=(\arabic*)]
    \item $X \leq^M_w Y$ to denote the statement
    \begin{gather*}
        \text{``for each consistent } \mathfrak{T} \in X \text{ there is } \mathfrak{T}' \in Y \text{ such that} \\
        \emptyset \neq \mathrm{Eval}^V(\mathfrak{T}') \subset \mathrm{Eval}^V(\mathfrak{T}) \text{'',}
    \end{gather*}
    and
    \item $X \leq^M Y$ to denote the statement
    \begin{align*}
        \text{``} & \text{there is a function } F : X \longrightarrow Y \text{ definable in } V \text{ such that} \\ 
        & \emptyset \neq \mathrm{Eval}^V(F(\mathfrak{T})) \subset \mathrm{Eval}^V(\mathfrak{T}) \text{ for all consistent } \mathfrak{T} \in X \text{''.}
    \end{align*}
    When said statement is true, we say $F$ \emph{witnesses} $X \leq^M Y$.
\end{enumerate}
\end{defi}

\begin{defi}
Let $X, Y$ be local method definitions. We say
\begin{enumerate}[label=(\arabic*)]
    \item $X \equiv^M_w Y$ iff $X \leq^M_w Y$ and $Y \leq^M_w X$,
    \item $X \equiv^M Y$ iff $X \leq^M Y$ and $Y \leq^M X$,
    \item $X <^M_w Y$ iff $X \leq^M_w Y$ and $Y \not\leq^M_w X$, and
    \item $X <^M Y$ iff $X \leq^M Y$ and $Y \not\leq^M X$.
\end{enumerate}
\end{defi}

\begin{ob}
\leavevmode
\begin{enumerate}[label=(\arabic*)]
    \item $\leq^M_w$ and $\leq^M$ are transitive relations, so $\equiv^M_w$ and $\equiv^M$ are equivalence relations.
    \item $\leq^M_w$ and $\leq^M$, as subclasses of $V$, are only definable outside $V$, for their definitions require quantification over proper subclasses of $V$. 
    \item Obviously, $X \leq^M Y$ always implies $X \leq^M_w Y$, so $\leq^M_w$ is weaker than $\leq^M$. 
\end{enumerate}
\end{ob}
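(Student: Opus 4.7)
The plan is to dispatch each of the three parts separately, with the transitivity claim in item (1) being the only one requiring any real argument.

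For transitivity of $\leq^M_w$, I fix a consistent $\mathfrak{T} \in X$ under the hypotheses $X \leq^M_w Y$ and $Y \leq^M_w Z$, extract an intermediate witness $\mathfrak{T}' \in Y$ satisfying $\emptyset \neq \mathrm{Eval}^V(\mathfrak{T}') \subset \mathrm{Eval}^V(\mathfrak{T})$, and then make the key observation that $\mathrm{Eval}^V(\mathfrak{T}') \neq \emptyset$ already forces $\mathfrak{T}'$ to be consistent. Indeed, by the very definition of $\mathrm{Eval}^V$, an element of $\mathrm{Eval}^V(\mathfrak{T}')$ has the form $V[\mathcal{M}]$ for some $\mathcal{M}$ in an outer model of $V$ with $\mathcal{M} \models^* \mathfrak{T}'$, which is precisely the definition of consistency for a TCI. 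With consistency of $\mathfrak{T}'$ in hand, the second hypothesis supplies $\mathfrak{T}'' \in Z$ with $\emptyset \neq \mathrm{Eval}^V(\mathfrak{T}'') \subset \mathrm{Eval}^V(\mathfrak{T}') \subset \mathrm{Eval}^V(\mathfrak{T})$, closing the chain. Transitivity of $\leq^M$ then follows from exactly the same evaluation chain, coupled with composition of the witnessing functions: given $V$-definable $F : X \to Y$ and $G : Y \to Z$, the composite $G \circ F$ is $V$-definable, sends $X$ into $Z$, and by the argument just given witnesses $X \leq^M Z$. Reflexivity of both relations is trivial (take $\mathfrak{T}' = \mathfrak{T}$, respectively $F$ the identity), whence $\equiv^M_w$ and $\equiv^M$ are indeed equivalence relations.

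For (2), I just unpack the definitions of $\leq^M_w$ and $\leq^M$: both ultimately rest on $\mathrm{Eval}^V$, which quantifies over elements of $\mathbf{M}(V)$, the class of outer models of $V$. From the internal perspective of $V$ there are no proper outer models available, so no set-theoretic formula with parameters in $V$ can capture either relation. If one wished to be more careful, one would note that a hypothetical $V$-definition of $\leq^M_w$ would, combined with Proposition \ref{prop326}, yield a $V$-definition of the class of TCIs with non-empty evaluation, which is visibly not expressible in $V$ given the existence of small extensions inaccessible from within $V$ (Facts \ref{fact319} and \ref{fact229}). Item (3) is immediate: if $F$ witnesses $X \leq^M Y$, then for each consistent $\mathfrak{T} \in X$ the element $F(\mathfrak{T}) \in Y$ satisfies the clause defining $X \leq^M_w Y$, so $X \leq^M_w Y$ holds.

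The only point requiring any care is the consistency-preservation observation inside transitivity; everything else is bookkeeping. In particular, I expect no obstacle beyond being attentive to the fact that ``consistent'' is a slightly stronger condition than ``has non-empty evaluation in $V$'', and verifying, as above, that the two actually coincide whenever the evaluation is already known to be non-empty.
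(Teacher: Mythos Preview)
Your argument for (1) and (3) is correct and, since the paper offers no proof of this observation at all, your write-up actually supplies the details the paper leaves implicit. In particular, the point that $\mathrm{Eval}^V(\mathfrak{T}') \neq \emptyset$ forces $\mathfrak{T}'$ to be consistent is exactly the hinge needed for transitivity, and you handle it cleanly.

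One genuine slip in your treatment of (2): your ``more careful'' parenthetical argument does not work. You claim that a $V$-definition of $\leq^M_w$ would yield a $V$-definition of the class of TCIs with non-empty evaluation, and that the latter is ``visibly not expressible in $V$''. But non-empty evaluation is equivalent to consistency, and Lemma~\ref{lem329} shows precisely that consistency of a TCI \emph{is} uniformly definable in $V$ via the forcing relation for $Col(\omega,\lambda)$. So the class you declare undefinable is in fact definable, and the reduction proves nothing. Your first, informal justification for (2) --- that $\mathrm{Eval}^V$ quantifies over $\mathbf{M}(V)$, a collection unavailable inside $V$ --- is all the paper itself offers, and is the right level of rigour here: item (2) is a heuristic remark about the \emph{form} of the definitions rather than a theorem about undefinability, and you should not try to upgrade it to one without a substantially different argument.
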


Intuitively, $Y$ is more complex than $X$ as a definition when $X \leq^M_w Y$ or $X \leq^M Y$, because $Y$ both refines and extends $X$. Refinement occurs because no matter set a description of $X$ picks out, $Y$ contains a description that picks out a smaller non-empty set. Extension occurs because $Y$ may have a description pick out a set that is not covered by any description of $X$. The difference between the two relations then boils down to whether a witness to said refinement and extension exists in $V$. Figure \ref{intuition} provides one way of visualising $X \leq^M Y$ with witness function $F$.

\begin{figure}[!ht]
    \centering
    \centerline{\hspace{1pt}\includegraphics[width=1.05\textwidth]{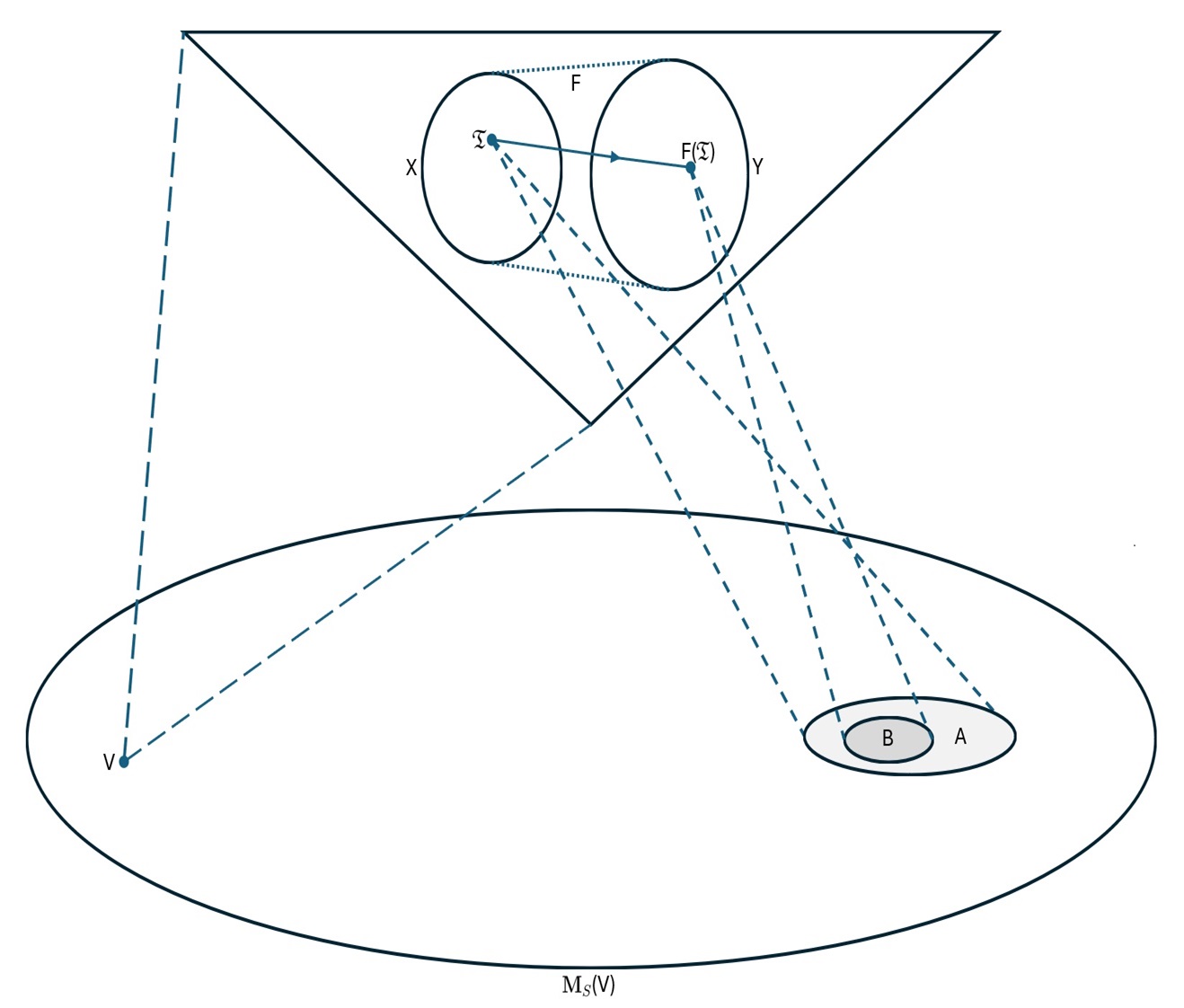}}
    \captionsetup{width=0.8\textwidth, format=hang}
    \caption[.]{Visual representation of a function $F$ witnessing $X \leq^M Y$, where $X$ and $Y$ are local method definitions. \\ Here, $\mathfrak{T}$ is an arbitrary consistent member of $X$, and \\ $B = \mathrm{Eval}^V(F(\mathfrak{T})) \subset \mathrm{Eval}^V(\mathfrak{T}) = A$.}
    \label{intuition}
\end{figure}

It would be good if $V$ can decide (albeit not uniformly) whether $X \leq^M Y$ for arbitrary local method definitions $X$ and $Y$. Unfortunately, there seems to be no straightforward indication of that: it is not clear if $V$ is always privy to proof of $X \leq^M Y$. For certain pairs $(X, Y)$ though, $X \leq^M Y$ is provable in $\mathsf{ZFC}$, and so $V$ must know it is true.

\begin{prop}\label{prop338}
Let $X, Y$ be local method definitions. If $X \subset Y$, then $X \leq^M Y$.
\end{prop}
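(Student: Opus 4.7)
The plan is to exhibit the identity on $X$ as the witnessing function. Specifically, I would define $F : X \longrightarrow Y$ by $F(\mathfrak{T}) := \mathfrak{T}$; this is a legitimate function into $Y$ precisely because of the hypothesis $X \subset Y$. Since $X$ is a local method definition, it is definable in $V$, and hence so is the identity function on $X$. This immediately takes care of the definability clause in the definition of $\leq^M$.

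Next I would verify the two set-theoretic requirements on $F$. The containment $\mathrm{Eval}^V(F(\mathfrak{T})) \subset \mathrm{Eval}^V(\mathfrak{T})$ is trivial, since by construction the two sides are equal. The only non-trivial point to check is that $\mathrm{Eval}^V(\mathfrak{T}) \neq \emptyset$ whenever $\mathfrak{T} \in X$ is consistent. Here I would unfold the definition of consistency: some outer model $W$ of $V$ contains a set $\mathcal{M}$ with $\mathcal{M} \models^* \mathfrak{T}$, which puts $V[\mathcal{M}]$ into $\mathrm{Eval}^V(\mathfrak{T})$ provided $V[\mathcal{M}]$ exists as a small extension.

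To justify existence of $V[\mathcal{M}]$, I would invoke Proposition \ref{prop322}. It produces $x := \Sigma(\mathfrak{T}, \mathcal{M}) \subset \mathcal{L}_{\mathfrak{T}}$, and by construction $\mathcal{L}_{\mathfrak{T}} \in V$. Fact \ref{fact34} then guarantees that $V[x]$ exists, and the remarks following Proposition \ref{prop322} give $V[\mathcal{M}] = V[x]$. Consequently $V[\mathcal{M}] \in \mathbf{M}_S(V)$ and $\mathrm{Eval}^V(\mathfrak{T})$ is non-empty, completing the verification.

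There is no serious obstacle here; the only subtle point is the non-emptiness clause, which hinges on remembering that consistency of a TCI is exactly what supplies a witness $\mathcal{M}$ in an outer model, and that Proposition \ref{prop322} together with Fact \ref{fact34} is exactly what is needed to convert such an $\mathcal{M}$ into a genuine member of $\mathbf{M}_S(V)$. Everything else is a direct appeal to the definitions.
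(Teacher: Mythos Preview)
Your proposal is correct and follows the same approach as the paper: the identity map on $X$ witnesses $X \leq^M Y$. The paper's proof is a one-liner that leaves implicit the verification of the non-emptiness clause you spell out; that verification is indeed handled earlier in the paper (the discussion after Proposition~\ref{prop322}), so your added detail is accurate but not strictly needed at this point.
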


\begin{proof}
The identity map on $X$ is definable in $V$ and witnesses $X \leq^M Y$.
\end{proof}

\begin{prop}
There is a greatest local method definition with respect to $\leq^M$.
\end{prop}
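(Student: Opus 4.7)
My plan is to exhibit an explicit witness: let $X^* := \{\mathfrak{T} \in V : \mathfrak{T} \text{ is a TCI}\}$, the class of all TCIs lying in $V$. I claim $X^*$ is a local method definition which is greatest with respect to $\leq^M$.

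First I would verify that $X^*$ is indeed a local method definition. The conditions enumerated in the definition of a TCI (tuple shape, the interpretation constraint map having appropriate domain, the disjointness conditions on powers of $z$, and the prenex normal form stipulation) are all first-order in the language of set theory with the tuple $\mathfrak{T}$ as parameter; in fact the property ``$\mathfrak{T}$ is a TCI'' is $\Delta_0$-expressible after a bounded amount of quantification. So $X^*$ is definable in $V$ (as a proper class, from $V$'s perspective). It is non-empty, for instance because it contains the trivial TCIs $\mathfrak{T}_s$ constructed in the proof of Proposition \ref{prop326}. Finally, $X^* \subset V$ is a set in the meta-theory because $V$ is a CTM, hence countable, hence any subcollection of $V$ is already a genuine set. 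So $X^*$ meets Definition \ref{def320} together with the local-definability clause.

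Next I would show $X^*$ is greatest. Let $Y$ be an arbitrary local method definition. By Definition \ref{def320} we have $Y \subset V$ and every element of $Y$ is a TCI, so $Y \subset X^*$. Proposition \ref{prop338} then yields $Y \leq^M X^*$. Since $Y$ was arbitrary, $X^*$ is greatest with respect to $\leq^M$.

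There is really no hard step here; the only mild subtlety is justifying that ``being a TCI'' is $V$-definable, and this is immediate from unfolding the defining clauses one at a time. Note that $X^*$ is only greatest, not unique: any local method definition $\equiv^M$-equivalent to $X^*$ (in particular, any $Y \supseteq X^*$ among local method definitions, of which there are none strictly bigger, but also things like $X^* \cup \{\mathfrak{T}_s\}$ for a fixed $s$, trivially) works equally well, so the statement is best read as ``there exists a $\leq^M$-greatest local method definition'', which is what the displayed argument provides.
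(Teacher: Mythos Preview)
Your proof is correct and follows essentially the same approach as the paper: take the class of all TCIs as the witness, note that every local method definition is a subset of it, and invoke Proposition~\ref{prop338}. Your write-up merely unpacks the ``clearly'' in the paper's proof by spelling out definability and non-emptiness; the closing remark about $X^* \cup \{\mathfrak{T}_s\}$ is vacuous (since $\mathfrak{T}_s \in X^*$ already) and can be dropped.
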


\begin{proof}
Let $Y$ be the class of all TCIs. Clearly $Y$ is a local method definition. Moreover, $X \subset Y$ for every local method definition $X$. By Proposition \ref{prop338}, $X \leq^M Y$ for every local method definition $X$.
\end{proof}

\begin{prop}
A local method definition is not smallest with respect to $\leq^M$ iff it contains a consistent TCI.
\end{prop}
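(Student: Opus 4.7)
The plan is to prove both directions of the biconditional separately, observing that the key factor controlling whether a local method definition is $\leq^M$-smallest is the presence (or absence) of any consistent TCI in it.

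For the forward (contrapositive) direction, I would assume every TCI in $X$ is inconsistent and show $X \leq^M Y$ for an arbitrary local method definition $Y$. Since $Y$ is nonempty by Definition~\ref{def320}, I can fix any $\mathfrak{T}' \in Y$ and let $F : X \to Y$ be the constant function sending each $\mathfrak{T} \in X$ to $\mathfrak{T}'$. This $F$ is definable in $V$ from the parameters $X$ and $\mathfrak{T}'$, and the condition
\begin{equation*}
    \emptyset \neq \mathrm{Eval}^V(F(\mathfrak{T})) \subset \mathrm{Eval}^V(\mathfrak{T}) \text{ for all consistent } \mathfrak{T} \in X
\end{equation*}
holds vacuously, since $X$ has no consistent TCI by assumption. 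Hence $X \leq^M Y$, and as $Y$ was arbitrary, $X$ is smallest.

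For the converse, I would assume $X$ contains some consistent TCI $\mathfrak{T}_0$ and exhibit a local method definition $Y$ with $X \not\leq^M Y$. Fix any inconsistent TCI $\mathfrak{T}_\bot$ --- for concreteness, one whose underlying theory contains an outright contradiction such as $\ulcorner \forall x \ (x \neq x) \urcorner$ --- and set $Y := \{\mathfrak{T}_\bot\}$. Then $Y$ is a nonempty local method definition, definable in $V$ from the parameter $\mathfrak{T}_\bot$. Any candidate witness $F : X \to Y$ must send $\mathfrak{T}_0$ to $\mathfrak{T}_\bot$, so $\mathrm{Eval}^V(F(\mathfrak{T}_0)) = \emptyset$, violating the nonemptiness clause in the definition of $\leq^M$. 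Thus $X \not\leq^M Y$, showing $X$ is not smallest.

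The only auxiliary observation required is that an inconsistent TCI has empty evaluation, which reads off directly from Definition~\ref{def323}: the defining class $\{(W, \mathcal{M}) : W \in \mathbf{M}(V) \wedge \mathcal{M} \in W \wedge \mathcal{M} \models^* \mathfrak{T}\}$ is empty precisely when no outer model of $V$ contains any model of $\mathfrak{T}$. No serious obstacle is anticipated; the proposition amounts to a clean structural observation about how (in)consistency trivialises membership in the $\leq^M$-ordering.
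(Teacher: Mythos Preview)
Your proof is correct. The forward direction (contrapositive) is the same as the paper's ``clearly'' clause: a constant map into any nonempty $Y$ vacuously witnesses $X \leq^M Y$ when $X$ has no consistent member.

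For the converse you take a genuinely different and more elementary route than the paper. You exhibit $Y = \{\mathfrak{T}_\bot\}$ with $\mathfrak{T}_\bot$ inconsistent, so that the only possible $F \colon X \to Y$ sends the consistent $\mathfrak{T}_0$ to something with empty evaluation, violating the nonemptiness clause. The paper instead produces a \emph{consistent} witness: it invokes Proposition~\ref{prop326} (together, implicitly, with Proposition~\ref{prop322}) to bound $\mathrm{Eval}^V(\mathfrak{T}_0)$ inside $\{V[z] : z \subset x\}$ for some fixed $x \in V$, and then uses that $Col(|x|^+,|x|^+)^V$ adds no new subsets of $x$ to obtain a small extension lying outside this bound; a singleton $Y$ whose unique TCI has that extension in its evaluation then fails the inclusion clause at $\mathfrak{T}_0$. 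Your argument is shorter and needs no forcing; the paper's argument, though heavier, shows a bit more --- namely that $X$ is not smallest even among local method definitions containing only consistent TCIs.

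One small caveat: depending on whether empty structures are admitted, $\ulcorner \forall x \ (x \neq x) \urcorner$ could be vacuously satisfiable. Using $\ulcorner \exists x \ (x \neq x) \urcorner$ (or fixing $\vartheta(\dot{\mathcal{U}}) = (y,1)$ for some nonempty $y$) removes any ambiguity.
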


\begin{proof}
Clearly, every local method definition containing no consistent TCIs is smallest with respect to $\leq^M$. For the converse, by Proposition \ref{prop326}, it suffices to show that for every $x \in V$, there is a small extension of $V$ not generated by a subset of $x$. But this is implied by the forcing notion $Col(|x|^+, |x|^+)^V$ adding no subsets of $x$ over $V$.
\end{proof}

We now define a natural hierarchy on the class of TCIs.

\begin{defi}\label{def339}
For $n < \omega$, we have the following local method definitions:
\begin{align*}
    \mathsf{\Pi^M_n} := \ & \{\mathfrak{T} : \mathfrak{T} \text{ is a } \Pi_n \text{ TCI}\}
    \\
    \mathsf{\Sigma^M_n} := \ & \{\mathfrak{T} : \mathfrak{T} \text{ is a } \Sigma_n \text{ TCI}\} \text{.}
\end{align*}
\end{defi}

\begin{prop}\label{prop340}
Let $n < \omega$. Then
\begin{itemize}
    \item $\mathsf{\Pi^M_n} \leq^M \mathsf{\Pi^M_{n+1}}$,
    \item $\mathsf{\Sigma^M_n} \leq^M \mathsf{\Sigma^M_{n+1}}$,
    \item $\mathsf{\Pi^M_n} \leq^M \mathsf{\Sigma^M_{n+1}}$, and
    \item $\mathsf{\Sigma^M_n} \leq^M \mathsf{\Pi^M_{n+1}}$.
\end{itemize}
\end{prop}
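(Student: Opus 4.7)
The plan is to reduce all four reductions to set-theoretic containments between the local method definitions involved and then invoke Proposition \ref{prop338}. Specifically, I will argue that as syntactic classes of formulas, $\Pi_n \subseteq \Pi_{n+1}$, $\Sigma_n \subseteq \Sigma_{n+1}$, $\Pi_n \subseteq \Sigma_{n+1}$, and $\Sigma_n \subseteq \Pi_{n+1}$; once these are in hand, the corresponding set-theoretic inclusions between the $\mathsf{\Pi^M_n}$'s and $\mathsf{\Sigma^M_n}$'s of Definition \ref{def339} follow immediately, because being a $\Pi_n$ or $\Sigma_n$ TCI is defined by requiring every sentence in the first-order component $T$ to be $\Pi_n$ or $\Sigma_n$ respectively.

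Two of the four formula-level containments are essentially immediate from the note following clauses \ref{3292}\ref{3292a} and \ref{3292}\ref{3292b}: taking $k = 0$ in those clauses tells us that every $\Sigma_n$ formula is already $\Pi_{n+1}$ and every $\Pi_n$ formula is already $\Sigma_{n+1}$. The remaining two containments, $\Pi_n \subseteq \Pi_{n+1}$ and $\Sigma_n \subseteq \Sigma_{n+1}$, I will establish by mutual induction on $n$. For the base case, any quantifier-free formula in $\Pi_0 = \Sigma_0$ is both $\Pi_1$ and $\Sigma_1$ by taking $k = 0$ in the respective clauses. For the inductive step, assuming $\Pi_{n-1} \subseteq \Pi_n$ and $\Sigma_{n-1} \subseteq \Sigma_n$, take any $\phi \in \Pi_n$ and write $\phi = \ulcorner \forall x_1 \dots \forall x_k \, \varphi \urcorner$ with $\varphi \in \Sigma_{n-1}$; by the inductive hypothesis $\varphi \in \Sigma_n$, so the very same quantifier decomposition witnesses $\phi \in \Pi_{n+1}$. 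The $\Sigma$-case is symmetric, swapping the roles of $\forall$ and $\exists$ throughout.

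With the four syntactic containments secured, each $\mathsf{\Pi^M_n}$ embeds as a subset of the relevant $\mathsf{\Pi^M_{n+1}}$ or $\mathsf{\Sigma^M_{n+1}}$ on the right-hand side, and similarly for $\mathsf{\Sigma^M_n}$; all four statements then follow from Proposition \ref{prop338} (with the identity map serving as the witnessing $V$-definable function). The only conceptual subtlety I anticipate is the need for a \emph{simultaneous} induction on the $\Pi$-side and $\Sigma$-side rather than a single one-sided argument, since lifting a $\Pi_n$ formula to $\Pi_{n+1}$ requires first knowing that its inner $\Sigma_{n-1}$ matrix can be read as a $\Sigma_n$ matrix. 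Beyond this bookkeeping, the argument contains no model-theoretic or forcing-theoretic content whatsoever; it is entirely a syntactic observation about the cumulative hierarchy of prenex formulas, with no appeal to consistency, evaluation maps, or the structure of $\mathbf{M}_S(V)$.
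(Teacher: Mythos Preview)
Your proposal is correct and follows exactly the same approach as the paper, which simply writes ``By Proposition \ref{prop338}.'' and leaves the underlying syntactic containments $\Pi_n \subset \Pi_{n+1}$, $\Sigma_n \subset \Sigma_{n+1}$, $\Pi_n \subset \Sigma_{n+1}$, $\Sigma_n \subset \Pi_{n+1}$ implicit. Your mutual-induction argument for these containments is a faithful unpacking of what the paper takes for granted.
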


\begin{proof}
By Proposition \ref{prop338}.
\end{proof}

By Proposition \ref{prop340}, 
\begin{equation*}
    \{\mathsf{\Pi^M_n} : n < \omega\} \cup \{\mathsf{\Sigma^M_n} : n < \omega\}
\end{equation*}
forms a hierarchy of local method definitions with $\leq^M$-predecessor sets that grow with $n$. We shall call this the \emph{local method hierarchy}.

Mathematics and computer science are replete with hierarchies similar to the local method hierarchy, where syntactic forms of defining formulas are used to categorise sets. Examples include the projective, arithmetical and polynomial hierarchies. If we think of TCIs as augmentations of first-order theories with added constraints that are bounded but not first-order definable, then the local method hierarchy segregates TCIs based only on their first-order parts.

It turns out that most of the $\mathsf{\Pi^M_n}$'s are unnecessary in this hierarchy. 

\begin{lem}\label{prop252}
Let $n$ satisfy $1 \leq n < \omega$. For every $\mathfrak{T} \in \mathsf{\Pi^M_{n+1}}$ there are
\begin{itemize}
    \item $\mathfrak{T}' \in \mathsf{\Sigma^M_n}$, and
    \item a formula $\theta$ with two free variables,
\end{itemize}
such that 
\begin{itemize}
    \item $\theta$ is absolute for outer models of $V$, and
    \item in every outer model of $V$, $\theta$ defines a bijection from the set of models of $\mathfrak{T}$ into the set of models of $\mathfrak{T}'$.
\end{itemize}
\end{lem}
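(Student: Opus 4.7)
The strategy is Skolemization adapted to the TCI setting. Given $\mathfrak{T} = (T, \sigma, \dot{\mathcal{U}}, \vartheta) \in \mathsf{\Pi^M_{n+1}}$, I would write each sentence in prenex form as $\phi = \forall \vec{x}\, \exists \vec{y}\, \psi(\vec{x}, \vec{y})$ with $\psi \in \Pi_{n-1}$, and introduce for each such $\phi$ a fresh function symbol $\dot{f}_\phi$ of arity $|\vec{x}|$. The new TCI $\mathfrak{T}' = (T', \sigma', \dot{\mathcal{U}}, \vartheta')$ has $\sigma' = \sigma \cup \{\dot{f}_\phi : \phi \in T\}$, with $\vartheta'$ extending $\vartheta$ by sending each $\dot{f}_\phi$ to a permissive constraint $(H, 0)$ for a set $H \in V$ chosen uniformly above $|trcl(\mathfrak{T})|$ (large enough that no intended Skolem function is ruled out a priori), and $T' := \{\forall \vec{x}\, \psi(\vec{x}, \dot{f}_\phi(\vec{x})) : \phi \in T\}$.

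For the complexity analysis: when $n \geq 2$, the formula $\psi \in \Pi_{n-1}$ begins with a universal block, so amalgamating the outer $\forall \vec{x}$ with the leading $\forall$ of $\psi$ yields a single $\Pi_{n-1}$ sentence. Since an empty existential prefix is permitted (the $k = 0$ branch in the paper's definition of $\Sigma_n$), one has $\Pi_{n-1} \subseteq \Sigma_n$, and $T'$ qualifies as a $\Sigma_n$ theory.

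For the absolute bijection $\theta$: in each outer model $W$ of $V$, send a model $\mathcal{M}$ of $\mathfrak{T}$ to the unique expansion $\mathcal{M}'$ in which each $\dot{f}_\phi$ is interpreted canonically --- for example, $\dot{f}_\phi(\vec{x})$ is declared to be the $<_{L[\mathcal{M}]}$-least $\vec{y}$ in $\mathcal{M}$ with $\mathcal{M} \models \psi(\vec{x}, \vec{y})$ (computed in $W$). Enforcing this canonical rule through an additional clause in $\vartheta'$ restricts the target class of models so that $\theta$ is bijective with $\sigma$-reduct as inverse. Absoluteness of $\theta$ follows from the uniform $\mathsf{ZFC}$-definability of the $<_{L[\cdot]}$-ordering and the fact that both $\mathcal{M}$ and $\mathfrak{T}$ are available as parameters in every outer model of $V$.

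The main obstacle is the base case $n = 1$: Skolemization alone only reduces $\Pi_2$ to $\Pi_1$, not to $\Sigma_1$, because $\psi$ is then quantifier-free and $\forall \vec{x}\, \psi(\vec{x}, \dot{f}_\phi(\vec{x}))$ remains a $\Pi_1$ sentence. To bridge this gap, I expect one needs a second auxiliary $\vartheta$-constrained symbol --- for instance a relation $\dot{Q}_\phi$ whose interpretation is fixed in $V$ to encode the extension of $\psi$ --- so that the outer universal quantifier can be absorbed into the interpretation constraint on $\dot{Q}_\phi$, leaving only an existential first-order residue (plausibly expressible as the $\Sigma_1$ claim that $\dot{Q}_\phi$ and $\dot{f}_\phi$ are compatible in the required way). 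Verifying that this auxiliary device still produces a bijection of models, rather than a one-way injection, and that $\theta$ remains absolute as the universe $U$ varies across outer models, is where I expect the technical weight of the proof to concentrate.
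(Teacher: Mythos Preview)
Your Skolemization approach is natural but differs from the paper's proof, and it has two genuine gaps.

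\textbf{The bijection cannot be enforced via $\vartheta'$.} The interpretation constraint map of a TCI can only pin an interpretation to a \emph{fixed set in $V$}: for a function symbol $\dot{X}$, the constraint $\vartheta'(\dot{X}) = (y',0)$ merely says the graph of $\dot{X}^{\mathcal{M}}$ lies inside $y' \cap U^{k+1}$. It cannot express ``$\dot{f}_\phi^{\mathcal{M}}$ is the $<_{L[\mathcal{M}]}$-least witness,'' because that condition depends on $\mathcal{M}$, not on a parameter from $V$. So ``enforcing this canonical rule through an additional clause in $\vartheta'$'' is not available, and without it every model of $\mathfrak{T}$ has many Skolem expansions, so the map $\mathcal{M} \mapsto \mathcal{M}'$ is not onto the models of $\mathfrak{T}'$. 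Trying instead to enforce canonicity by a first-order axiom (``$\dot{f}_\phi(\vec{x})$ is $\dot{<}$-least such that $\psi$'' for a fixed well-order $\dot{<}$ of the ambient set) reintroduces a universal quantifier in front of $\neg\psi$, pushing the complexity back up.

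\textbf{The case $n=1$ is not handled.} You correctly observe that Skolemizing a $\Pi_2$ sentence yields only a $\Pi_1$ sentence, and your sketch of a fix via an auxiliary predicate $\dot{Q}_\phi$ is too vague to evaluate; in particular it is unclear how a constraint on $\dot{Q}_\phi$ fixed in $V$ could absorb a universal quantifier ranging over a universe $U$ that varies from model to model.

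The paper sidesteps both problems with a different idea: rather than Skolemize, it \emph{freezes the universe}. Writing $\vartheta(\dot{\mathcal{U}}) = (y,z)$, the new TCI sets $\vartheta'(\dot{\mathcal{U}}) = (y,1)$, adds a unary predicate $\dot{T}$ to carry the old universe, and adds a constant $\dot{c}$ for every $c \in y$. Each $\Pi_{n+1}$ sentence $\forall x_1 \dots \forall x_k\, \varphi$ (with $\varphi \in \Sigma_n$) is replaced by the \emph{schema} of all instances obtained by substituting constants $\dot{c}$ for the $x_i$ and relativising the remaining quantifiers to $\dot{T}$. Since the outer universal block is eliminated by instantiation rather than by Skolem terms, each instance is genuinely $\Sigma_n$, uniformly in $n \geq 1$ (so $n=1$ needs no special treatment). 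The bijection is then the obvious one: a model of $\mathfrak{T}'$ restricts to $(\dot{T}^{\mathcal{M}}; \sigma^{\mathcal{M}})$, and conversely a model of $\mathfrak{T}$ extends uniquely to universe $y$ with $\dot{T}$ naming its old domain. No choice of witnesses is involved, so surjectivity is automatic.
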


\begin{proof}
Let 
\begin{itemize}
    \item $1 \leq n < \omega$,
    \item $\mathfrak{T}  = (T, \sigma, \dot{\mathcal{U}}, \vartheta) \in \mathsf{\Pi^M_{n+1}}$, and 
    \item $\vartheta(\dot{\mathcal{U}}) = (y, z)$.
\end{itemize} 
We shall first construct $\mathfrak{T}' \in \mathsf{\Sigma^M_n}$ from $\mathfrak{T}$. Expand the signature $\sigma$ to $\sigma'$ by adding 
\begin{itemize}
    \item a unary relation symbol $\dot{T}$, as well as
    \item a constant symbol $\dot{c}$ for each $c \in y$, 
\end{itemize}
all of which are new to $\sigma$ and distinct from one another. Define $\vartheta'$ point-wise as follows:
\begin{gather*}
    \vartheta'(\dot{\mathcal{U}}) := (y, 1) \\
    \vartheta'(\dot{X}) := (y', 0) \text{ whenever } \dot{X} \in \sigma \text{ and } \vartheta(\dot{X}) = (y', z') \\
    \vartheta'(\dot{T}) := (y, z) \\
    \vartheta'(\dot{c}) := (\{c\}, 0) \text{ for all } c \in y \text{.}
\end{gather*}

Next, initialise $T^*$ to be 
\begin{align*}
    T & \cup \{\ulcorner \forall x_1 \dots \forall x_k \ \exists x_{k+1} \ (\dot{F}(x_1, \dots, x_k) = x_{k+1}) \urcorner \\
    & \mspace{25mu} : \dot{F} \in \sigma \text{ is a } k \text{-ary function symbol}\} \text{.}
\end{align*}
For each $\dot{X} \in \sigma$, so that $\vartheta(\dot{X})$ is of the form $(y', z')$, do the following.
\begin{enumerate}[label=Case (\arabic*)$_{\dot{X}}$:, leftmargin=70pt]
    \item $\dot{X}$ is a $k$-ary function symbol. Without loss of generality, we can assume $y' \subset y^{k+1}$. Add to $T^*$ every member of the set
    \begin{align*}
        \{ & \ulcorner \dot{X}(\dot{c}_1, \ldots, \dot{c}_k) = \dot{c}_{k+1} \implies \dot{T}(\dot{c}_1) \wedge \ldots \wedge \dot{T}(\dot{c}_{k+1}) \urcorner \\
        & : (c_1, \ldots, c_{k+1}) \in y'\} \text{.}
    \end{align*}
    If in addition, $z' = 1$, then also add to $T^*$ every member of the set
    \begin{align*}
        \{ & \ulcorner \dot{T}(\dot{c}_1) \wedge \ldots \wedge \dot{T}(\dot{c}_{k+1}) \implies \dot{X}(\dot{c}_1, \ldots, \dot{c}_k) = \dot{c}_{k+1} \urcorner \\
        & : (c_1, \ldots, c_{k+1}) \in y'\} \text{.}
    \end{align*}
    \item $\dot{X}$ is a $k$-ary relation symbol. Without loss of generality, we can assume $y' \subset y^k$. Add to $T^*$ every member of the set
    \begin{align*}
        \{ & \ulcorner \dot{X}(\dot{c}_1, \ldots, \dot{c}_k) \implies \dot{T}(\dot{c}_1) \wedge \ldots \wedge \dot{T}(\dot{c}_k) \urcorner \\
        & : (c_1, \ldots, c_k) \in y'\} \text{.}
    \end{align*}
    If in addition, $z' = 1$, then also add to $T^*$ every member of the set
    \begin{align*}
        \{ & \ulcorner \dot{T}(\dot{c}_1) \wedge \ldots \wedge \dot{T}(\dot{c}_k) \implies \dot{X}(\dot{c}_1, \ldots, \dot{c}_k) \urcorner \\
        & : (c_1, \ldots, c_k) \in y'\} \text{.}
    \end{align*}
    \item $\dot{X}$ is a constant symbol. Without loss of generality, we can assume $y' \subset y$. Add to $T^*$ the sentence
    \begin{align*}
        \ulcorner \dot{T}(\dot{X}) \urcorner \text{.}
    \end{align*}
\end{enumerate}
Now $T^*$, like $T$, contains only $\Pi_{n+1}$ sentences.

Fix any formula $\phi \in T^*$. Then $\phi$ is of the form 
\begin{equation*}
    \ulcorner \forall x_1 \dots \forall x_k \ \varphi \urcorner \text{,}
\end{equation*}
where $k < \omega$ and $\varphi$ is a $\Sigma_n$ formula of which leading quantifier --- should it exist --- is not a universal quantifier. Note that such $k$ and $\varphi$ are unique for $\phi$. If $\Vec{a} \in {^{k}{\{\dot{c} : c \in y\}}}$, use $\phi_{\Vec{a}}$ to denote the result of running the following procedure on $\phi$.
\begin{enumerate}[label=(\arabic*)]
    \item For each subformula $\psi$ containing at least one quantifier, in descending order of length (which is necessarily linear due to $\phi$ being in prenex normal form), do as per the cases below.
    \begin{enumerate}[label=Case (\arabic*)$_{\psi}$:, leftmargin=70pt]
        \item $\psi = \ulcorner \forall x \ \psi' \urcorner$ for some $x$ and $\psi'$. In this case, replace $\psi'$ with the string
        \begin{equation*}
            \ulcorner (\dot{T}(x) \implies \psi') \urcorner \text{.}
        \end{equation*}
        \item $\psi = \ulcorner \exists x \ \psi' \urcorner$ for some $x$ and $\psi'$. In this case, replace $\psi'$ with the string
        \begin{equation*}
            \ulcorner (\dot{T}(x) \wedge \psi') \urcorner \text{.}
        \end{equation*}
    \end{enumerate}
    \item For each $i$ such that $1 \leq i \leq k$, remove all instances of the string $\ulcorner \forall x_i \urcorner$.
    \item Substitute $\Vec{a}(i - 1)$ for every instance of $x_i$ whenever $1 \leq i \leq k$.
\end{enumerate}
It is not hard to verify that the aforementioned procedure is well-defined and produces a $\Sigma_n$ sentence over $\sigma'$. As a result, 
\begin{equation*}
    T_{\phi} := \{\phi_{\Vec{a}} : \Vec{a} \in {^{k}{\{\dot{c} : c \in y\}}}\}
\end{equation*}
is a set of $\Sigma_n$ sentences over $\sigma'$. We set 
\begin{equation*}
    T' := \bigcup \{T_{\phi} : \phi \in T^*\} \text{.}
\end{equation*}
so that $\mathfrak{T}' := (T', \sigma', \dot{\mathcal{U}}, \vartheta') \in \mathsf{\Sigma^M_n}$. Then the following hold true in every outer model of $V$.
\begin{enumerate}[label=(T\arabic*)]
    \item\label{316t1} Given any model $\mathcal{M}$ of $\mathfrak{T}'$,
    \begin{equation*}
        (\dot{T}^{\mathcal{M}}; \sigma^{\mathcal{M}})
    \end{equation*} 
    is a model of $\mathfrak{T}$. 
    \item\label{316t2} Every model of $\mathfrak{T}$ can be uniquely and constructively extended and expanded to a model of $\mathfrak{T}'$.
\end{enumerate}
It is clear that the transformations involved in \ref{316t1} and \ref{316t2} are absolute for outer models of $V$.
\end{proof}

An important upshot of Lemma \ref{prop252} is the corollary below.

\begin{cor}
$\mathsf{\Pi^M_{n+1}} \leq^M \mathsf{\Sigma^M_n}$ for all $n$ satisfying $1 \leq n < \omega$.
\end{cor}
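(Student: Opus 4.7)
The plan is to read the desired inequality off of Lemma \ref{prop252} essentially for free, the one thing still to check being the uniformity (hence $V$-definability) of the construction. Fix $n$ with $1 \leq n < \omega$. I would first observe that the proof of Lemma \ref{prop252} produces $\mathfrak{T}'$ from $\mathfrak{T} = (T, \sigma, \dot{\mathcal{U}}, \vartheta)$ by a completely explicit syntactic recipe: expand $\sigma$ by a fresh unary relation symbol $\dot{T}$ and constants $\dot{c}$ indexed by $c \in y$ (where $\vartheta(\dot{\mathcal{U}}) = (y,z)$), modify $\vartheta$ pointwise as described, and rewrite every $\phi \in T^{*}$ by the fixed procedure bounding every quantifier of $\phi$ to $\dot{T}$ and substituting constants for the leading universally quantified variables. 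Every choice here depends only on the syntactic data of $\mathfrak{T}$, so the assignment $F : \mathfrak{T} \mapsto \mathfrak{T}'$ is a $V$-definable class function from $\mathsf{\Pi^M_{n+1}}$ into $\mathsf{\Sigma^M_n}$, which is the candidate witness for $\mathsf{\Pi^M_{n+1}} \leq^M \mathsf{\Sigma^M_n}$.

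Next I would verify $\emptyset \neq \mathrm{Eval}^V(F(\mathfrak{T})) \subset \mathrm{Eval}^V(\mathfrak{T})$ for every consistent $\mathfrak{T} \in \mathsf{\Pi^M_{n+1}}$. The formula $\theta$ supplied by Lemma \ref{prop252}, being absolute for outer models of $V$ and taking values in the same outer model in which its input lives, defines in each $W \in \mathbf{M}(V)$ a bijection between the models of $\mathfrak{T}$ in $W$ and the models of $F(\mathfrak{T})$ in $W$. So for any $\mathcal{M} \in W$ with $\mathcal{M} \models^{*} \mathfrak{T}$, the model $\mathcal{M}' := \theta(\mathfrak{T}, \mathcal{M}) \in W$ satisfies $\mathcal{M}' \models^{*} F(\mathfrak{T})$, and conversely; moreover $\mathcal{M}$ and $\mathcal{M}'$ are mutually definable from $\mathfrak{T}$ by absolute formulas, so $V[\mathcal{M}] = V[\mathcal{M}']$ by the discussion following Proposition \ref{prop322} (applied once to $\mathfrak{T}$ and once to $F(\mathfrak{T})$, noting $V[\mathcal{M}] = V[\Sigma(\mathfrak{T}, \mathcal{M})]$ in each case). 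Therefore $\mathrm{Eval}^V(F(\mathfrak{T})) = \mathrm{Eval}^V(\mathfrak{T})$. Non-emptiness when $\mathfrak{T}$ is consistent is then immediate: a model of $\mathfrak{T}$ in some outer model of $V$ yields, through $\theta$, a model of $F(\mathfrak{T})$ in the same outer model.

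The only potentially delicate point is that we are invoking Lemma \ref{prop252} in a uniform way across all $\mathfrak{T}$ simultaneously, so I would briefly argue --- as in the proof of Corollary \ref{cor324} --- that the auxiliary symbols $\dot{T}$ and $\dot{c}$ can be selected by a $V$-definable choice function (e.g.\ taking $\dot{c}$ to be a canonical ordered pair encoding $c$), and that the absolute formula $\theta$ is obtained by a fixed arithmetisation of the transformations \ref{316t1}--\ref{316t2}. With this in hand, $F$ is a genuine $V$-definable function witnessing $\mathsf{\Pi^M_{n+1}} \leq^M \mathsf{\Sigma^M_n}$, and the corollary follows. I expect no further obstacles: the substantive content has already been absorbed into Lemma \ref{prop252}, and the corollary is essentially a packaging step.
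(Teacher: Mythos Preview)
Your proposal is correct and follows the same approach as the paper, which simply states the corollary as an immediate upshot of Lemma \ref{prop252} without further proof. You have faithfully unpacked what the paper leaves implicit: the uniformity of the $\mathfrak{T} \mapsto \mathfrak{T}'$ construction, and the fact that the absolute bijection $\theta$ forces $V[\mathcal{M}] = V[\mathcal{M}']$ (hence $\mathrm{Eval}^V(F(\mathfrak{T})) = \mathrm{Eval}^V(\mathfrak{T})$), which is more than the bare inclusion required.
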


We are interested in how $\mathsf{Fg}$ might fit into the local method hierarchy. To that end, let us first make a simple observation.

\begin{prop}\label{prop343}
$\mathsf{Fg} \leq^M \mathsf{\Sigma^M_1}$.
\end{prop}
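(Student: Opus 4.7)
The plan is to route $\mathsf{Fg}$ through $\mathsf{\Pi^M_2}$ and then invoke the reduction $\mathsf{\Pi^M_2} \leq^M \mathsf{\Sigma^M_1}$ already packaged in the corollary to Lemma \ref{prop252} (taken at $n = 1$). Transitivity of $\leq^M$ will stitch the two steps together, so the only real content is verifying that every forcing TCI lies in $\mathsf{\Pi^M_2}$.

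First I would inspect the three sentence schemas comprising the theory $T$ in the construction of $\mathfrak{T}(\mathbb{P})$ in the proof of Proposition \ref{prop323}. The directedness axiom has quantifier-free matrix behind a $\forall\forall\exists$ prefix and is therefore $\Pi_2$ by clause \ref{3292}\ref{3292a} (take $k = 2$ and read the subformula $\ulcorner \exists r \, \ldots \urcorner$ as $\Sigma_1$). The upward-closure axiom is $\Pi_1$, and each density-meeting sentence $\ulcorner \exists p \, (\dot{G}(p) \wedge \dot{D}(p)) \urcorner$ is $\Sigma_1$. The main place to trip up is here: one needs the fact --- built into the hierarchy definition by the $k = 0$ convention in clauses \ref{3292}\ref{3292a} and \ref{3292}\ref{3292b} --- that every $\Sigma_1$ sentence counts as $\Pi_2$ and every $\Pi_1$ sentence does too (the latter using $\Sigma_0 \subset \Sigma_1$). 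With that recorded, every element of $T$ is $\Pi_2$, so $\mathfrak{T}(\mathbb{P}) \in \mathsf{\Pi^M_2}$, giving $\mathsf{Fg} \subset \mathsf{\Pi^M_2}$.

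From here the proof assembles itself. Applying Proposition \ref{prop338} to the inclusion $\mathsf{Fg} \subset \mathsf{\Pi^M_2}$ yields $\mathsf{Fg} \leq^M \mathsf{\Pi^M_2}$, witnessed by the identity map on $\mathsf{Fg}$. The corollary to Lemma \ref{prop252} at $n = 1$ supplies a $V$-definable witness $G : \mathsf{\Pi^M_2} \to \mathsf{\Sigma^M_1}$ to $\mathsf{\Pi^M_2} \leq^M \mathsf{\Sigma^M_1}$; inspection of the construction of $\mathfrak{T}'$ from $\mathfrak{T}$ in that proof further shows the absolute model bijection preserves the set $V[\mathcal{M}]$, so $\mathrm{Eval}^V(G(\mathfrak{T}))$ in fact equals $\mathrm{Eval}^V(\mathfrak{T})$, trivially meeting the non-emptiness and containment requirements. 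Transitivity of $\leq^M$ now delivers $\mathsf{Fg} \leq^M \mathsf{\Sigma^M_1}$ via the composition $G \circ \mathrm{id}_{\mathsf{Fg}}$.
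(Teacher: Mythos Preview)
Your proof is correct and follows essentially the same route as the paper: verify from the construction in Proposition~\ref{prop323} that every $\mathfrak{T}(\mathbb{P})$ is $\Pi_2$, deduce $\mathsf{Fg} \leq^M \mathsf{\Pi^M_2}$, and then invoke Lemma~\ref{prop252} (at $n=1$) together with transitivity of $\leq^M$. Your version is more explicit about the quantifier accounting and about why the model bijection in Lemma~\ref{prop252} preserves $\mathrm{Eval}^V$, but the architecture is identical.
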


\begin{proof}
For all forcing notions $\mathbb{P}$, the TCI $\mathfrak{T}(\mathbb{P})$ is always a member of $\mathsf{\Pi^M_2}$ by the proof of Proposition \ref{prop323}. That $\mathfrak{T}(\cdot)$ is definable in $V$ makes it a witness to $\mathsf{Fg} \leq^M \mathsf{\Pi^M_2}$. Lemma \ref{prop252} then implies $\mathsf{Fg} \leq^M \mathsf{\Sigma^M_1}$.
\end{proof}

\begin{lem}\label{lem344}
Let $\mathfrak{T} \in V$ be a consistent TCI and $\mathbb{P} = (P, \leq_{\mathbb{P}})$ be a forcing notion such that 
\begin{enumerate}[label=(\arabic*)]
    \item\label{3441} $\leq_{\mathbb{P}} \ = \ \supset \cap \ P$,
    \item\label{3442} every member of $P$ is a finite set, and
    \item\label{3443} $\Vdash_{\mathbb{P}}^V$``$\bigcup \dot{G} \subset \mathcal{L}_{\mathfrak{T}}$ and $\dot{G}$ witnesses a $(\mathbb{P}, \dot{V})$-generic model of $\mathfrak{T}$'',  
\end{enumerate}
where $\dot{G}$ and $\dot{V}$ are the canonical names for the generic filter on $\mathbb{P}$ and for the ground model, respectively. Then $\emptyset \neq \mathrm{Eval}^V(\mathfrak{T}(\mathbb{P})) \subset \mathrm{Eval}^V(\mathfrak{T})$.
\end{lem}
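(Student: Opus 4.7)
The plan is to translate both sides of the claimed inclusion through the definition of $\mathrm{Eval}^V$, and then chase everything through part (1) of Observation \ref{smallvgen}. By Proposition \ref{prop323}, in any outer model of $V$ a set $\mathcal{M}$ models $\mathfrak{T}(\mathbb{P})$ if and only if the predicate $\dot{G}^{\mathcal{M}}$ encodes a $\mathbb{P}$-generic filter over $V$; since $\mathfrak{T}(\mathbb{P})$ pins down all its other symbols via $\vartheta$, such an $\mathcal{M}$ is recoverable inside $V[g]$ from the encoded filter $g$, so $V[\mathcal{M}] = V[g]$. Consequently $\mathrm{Eval}^V(\mathfrak{T}(\mathbb{P})) = \{V[g] : g \text{ is a } \mathbb{P}\text{-generic filter over } V\}$, and non-emptiness is immediate: $V$ is a CTM and $\mathbb{P} \in V$, so a $\mathbb{P}$-generic filter over $V$ exists by the standard Rasiowa--Sikorski argument.

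For the inclusion, fix an arbitrary $\mathbb{P}$-generic filter $g$ over $V$; the goal is to place $V[g]$ into $\mathrm{Eval}^V(\mathfrak{T})$. Hypothesis \ref{3443}, together with the forcing theorem (Theorem \ref{thm310}), yields in $V[g]$ that $\bigcup g \subset \mathcal{L}_{\mathfrak{T}}$ and that $g$ witnesses some $(\mathbb{P}, V)$-generic model $\mathcal{M}$ of $\mathfrak{T}$. Part (1) of Observation \ref{smallvgen} then applies verbatim to give $V[g] = V[\mathcal{M}]$, and since $\mathcal{M} \models^* \mathfrak{T}$ lives in the outer model $V[g] \in \mathbf{M}(V)$ of $V$, we conclude $V[g] = V[\mathcal{M}] \in \mathrm{Eval}^V(\mathfrak{T})$, as required.

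Hypotheses \ref{3441} and \ref{3442} never enter the argument explicitly; they are the structural constraints that make hypothesis \ref{3443} geometrically meaningful --- conditions are finite approximations to the eventual diagram $\bigcup g \subset \mathcal{L}_{\mathfrak{T}}$, strengthened by reverse inclusion --- but once \ref{3443} is in hand, everything collapses onto the aforementioned observation. I therefore do not anticipate a real obstacle: the lemma is essentially a packaging of that observation together with the forcing-to-TCI translation of Proposition \ref{prop323}. The one piece of genuine book-keeping is verifying the identification $V[\mathcal{M}] = V[g]$ on the $\mathfrak{T}(\mathbb{P})$ side, which is routine because $\vartheta$ rigidly determines every symbol of $\mathfrak{T}(\mathbb{P})$ other than $\dot{G}$.
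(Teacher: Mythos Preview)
Your proof reaches the conclusion by citing Observation~\ref{smallvgen}\ref{5282} for the key identity $V[g]=V[\mathcal{M}]$, whereas the paper does not invoke that observation here but instead proves the identity from scratch. Concretely, once hypothesis~\ref{3443} yields $\mathcal{M}\in V[g]$ with $\Sigma(\mathfrak{T},\mathcal{M})=\bigcup g$, the paper uses hypotheses~\ref{3441} and~\ref{3442} to show $g=[\bigcup g]^{<\omega}\cap P$: every $p\in[\bigcup g]^{<\omega}\cap P$ has each of its finitely many elements lying in some $q_x\in g$, the filter property gives a common extension $q\in g$, and reverse-inclusion ordering then forces $p\subset q$, hence $p\in g$ by upward closure. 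This recovers $g$ inside $V[\mathcal{M}]$ from $\bigcup g$ and the parameter $P\in V$, closing the inclusion $V[g]\subset V[\mathcal{M}]$.

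So your route is genuinely different --- shorter, but it off-loads all the work onto the observation. Your remark that \ref{3441} and \ref{3442} ``never enter the argument explicitly'' is exactly where the two approaches diverge: in the paper they carry the entire content of the inclusion $V[g]\subset V[\mathcal{M}]$. If one accepts Observation~\ref{smallvgen}\ref{5282} at face value your argument is complete; but note that the observation amounts to $V[g]=V[\bigcup g]$ for an \emph{arbitrary} $\mathbb{P}$ with $\bigcup g\subset\mathcal{L}_{\mathfrak{T}}$, and it is not evident how to recover a generic filter from its union without some structural control on the conditions. The paper's argument here is in effect a proof of that observation under \ref{3441}--\ref{3442}, so calling those hypotheses merely ``geometrically meaningful'' decoration undersells their role.
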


\begin{proof}
First, $\mathrm{Eval}^V(\mathfrak{T}(\mathbb{P}))$ is exactly the set of all $\mathbb{P}$-generic extensions of $V$, so $\emptyset \neq \mathrm{Eval}^V(\mathfrak{T}(\mathbb{P}))$. 

Let $U \in \mathrm{Eval}^V(\mathfrak{T}(\mathbb{P}))$, so that $U = V[g]$ for some $\mathbb{P}$-generic filter $g$ over $V$. By \ref{3443}, there is $\mathcal{M} \in U$ such that $\mathcal{M} \models^* \mathfrak{T}$, which implies $V[\mathcal{M}] \subset U$ and $V[\mathcal{M}] \in \mathrm{Eval}^V(\mathfrak{T})$. That 
\begin{equation*}
    \Vdash_{\mathbb{P}}^V \text{``} \bigcup \dot{G} \subset \mathcal{L}_{\mathfrak{T}} \text{''}
\end{equation*}
means $\bigcup g$ is definable from $\mathcal{M}$ in any transitive model of $\mathsf{ZFC}$, so $\bigcup g \in V[\mathcal{M}]$. To show $U \subset V[\mathcal{M}]$, it suffices to show $g$ is recoverable from $\bigcup g$ in $V[\mathcal{M}]$ with the help of parameters in $V$. 

We claim $g = [\bigcup g]^{< \omega} \cap P$. Clearly $g \subset [\bigcup g]^{< \omega} \cap P$ due to \ref{3442}. Next assume $p \in [\bigcup g]^{< \omega} \cap P$. Then for each $x \in p$, there must be some $q_x \in g$ containing $x$. As $p$ is a finite set and $g$ is a filter on $\mathbb{P}$, 
\begin{equation*}
    S := \{q_x : x \in p\}
\end{equation*}
has a common extension, say $q$, in $g$. By \ref{3441}, $p \subset \bigcup S \subset q$, so also $p \in g$. This proves our claim as well as the lemma.
\end{proof}

Lemma \ref{lem344} provides a direction towards proving $\mathsf{\Sigma^M_1} \leq^M \mathsf{Fg}$: we can try to define a function $F$ on $\mathsf{\Sigma^M_1}$ such that whenever $\mathfrak{T} \in \mathsf{\Sigma^M_1}$ is consistent, $F(\mathfrak{T}) = \mathfrak{T}(\mathbb{P})$ for some forcing notion $\mathbb{P}$ satisfying the hypothesis of Lemma \ref{lem344} in conjunction with $\mathfrak{T}$. 

Putting aside $\mathsf{Fg}$ for a moment, let us consider the local method hierarchy in and of itself. Notice that we have neither proven nor disproven anything about the size of
\begin{equation*}
    \{\mathsf{\Pi^M_n} : n < \omega\} \cup \{\mathsf{\Sigma^M_n} : n < \omega\} \text{ modulo } \equiv^M \text{,}
\end{equation*}
or equivalently, 
\begin{equation*}
    \{\mathsf{\Pi^M_1}\} \cup \{\mathsf{\Sigma^M_n} : n < \omega\} \text{ modulo } \equiv^M \text{,}
\end{equation*}
besides the obvious fact that it is countable and non-zero. Indeed, there seems to be no easy way of separating the rungs of the hierarchy as yet. This appears in stark contrast with the more renowned arithmetical and projective hierarchies, where separation happens ``everywhere''. However, by no means is this a reason to dismiss (our definition of) the hierarchy, or discourage the study thereof. One need not look far to find a well-studied hierarchy of the same ilk with the same ``issue'': the polynomial hierarchy, in which separation of any kind is equivalent to $\mathsf{P} \neq \mathsf{NP}$. 

\begin{ques}\label{q325}
\leavevmode
\begin{enumerate}[label=(\arabic*)]
    \item\label{325a} Are there $m, n < \omega$ for which $\mathsf{\Sigma^M_m} \not\equiv^M \mathsf{\Sigma^M_n}$?
    \item\label{325b} Is it true that $\mathsf{\Pi^M_1} \leq^M \mathsf{\Sigma^M_0}$?
    \item\label{325c} Is there a TCI $\mathfrak{T}$ such that $\{\mathfrak{T}\} \not\leq^M \mathsf{Fg}$?
\end{enumerate}
\end{ques}

So far, every small extension of $V$ we know how to construct either
\begin{itemize}
    \item is a forcing extension of $V$, or
    \item depends on information extracted from a proper-class-sized fragment of $V$, if not more.
\end{itemize} 
Naturally, we might wonder if it is possible to generate small extensions of $V$ based locally on a set in $V$, in a way that differs substantively from set forcing. This is essentially what \ref{325c} of Question \ref{q325} is asking. Should it be answered in the affirmative, non-trivial separation of the local method hierarchy becomes more hopeful, towards which \ref{325a} of Question \ref{q325} can be a guiding question. Should it be answered in the negative, $\mathsf{Fg}$ is a greatest local method definition with respect to $\leq^M$, and the local method hierarchy stops at $\mathsf{\Sigma^M_1}$ --- in this case \ref{325b} of Question \ref{q325} becomes the main question surrounding the relation $\leq^M$. 

\section{Categorising Forcing}

In this section, we complete what we started in Subsection \ref{ss32}, and associate set forcing with a rung of the local method hierarchy. Additionally, we will study different witnesses to the fact that $\mathsf{\Pi^M_2} \leq^M \mathsf{Fg}$.

\subsection{Forcing is \texorpdfstring{$\Sigma_1$}{Σ-1} (is \texorpdfstring{$\Pi_2$}{Π-2})}

This subsection is dedicated to showing $\mathsf{Fg} \equiv^M \mathsf{\Sigma^M_1}$. One direction of the proof is done in Proposition \ref{prop343}. For the other direction, we will identify a witness to $\mathsf{\Pi^M_2} \leq^M \mathsf{Fg}$. This witness can be defined without referencing any witness to $\mathsf{\Pi^M_2} \leq^M \mathsf{\Sigma^M_1}$.

\begin{defi}
Let $\mathfrak{T} \in V$ be a TCI. Define 
\begin{equation*}
    P(\mathfrak{T}) := \{p \in [\mathcal{L}_{\mathfrak{T}}]^{< \omega} : \exists W \! \in \! \mathbf{M}(V) \ \exists \mathcal{M} \ (\mathcal{M} \in W \text{, } \mathcal{M} \models^* \mathfrak{T} \text{ and } p \subset \Sigma(\mathfrak{T}, \mathcal{M}))\}
\end{equation*}
\end{defi}

It may not be clear that $P(\mathfrak{T})$ is a member of $V$ for arbitrary $\mathfrak{T} \in V$. We prove this in the next lemma.

\begin{defi}
Let $\mathfrak{T} \in V$ be a TCI. Define 
\begin{equation*}
    P'(\mathfrak{T}) := \{p \in [\mathcal{L}_{\mathfrak{T}}]^{< \omega} : \ \Vdash^V_{Col(w, |\mathfrak{A}_{\mathfrak{T}}|)} \exists \mathcal{M} \ (\text{``} \mathcal{M} \models^* \mathfrak{T} \text{ and } p \subset \Sigma(\mathfrak{T}, \mathcal{M}) \text{''})\}
\end{equation*}
\end{defi}

\begin{lem}
Let $\mathfrak{T} \in V$ be a TCI. Then $P(\mathfrak{T}) = P'(\mathfrak{T})$, so there is a definition of $P(\mathfrak{T})$ uniform over all TCIs $\mathfrak{T}$ in $V$.
\end{lem}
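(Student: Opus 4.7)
The plan is to prove the equality by two inclusions, and then read off uniformity as a direct consequence.

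The direction $P'(\mathfrak{T}) \subseteq P(\mathfrak{T})$ is immediate. Given $p \in P'(\mathfrak{T})$, pick any $Col(\omega, |\mathfrak{A}_{\mathfrak{T}}|)$-generic filter $g$ over $V$ (existing in the meta-universe); then $V[g] \in \mathbf{M}(V)$, and by the defining forcing clause of $P'(\mathfrak{T})$, there is $\mathcal{M} \in V[g]$ with $\mathcal{M} \models^* \mathfrak{T}$ and $p \subset \Sigma(\mathfrak{T}, \mathcal{M})$, which witnesses $p \in P(\mathfrak{T})$.

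For the reverse inclusion, the strategy is to reduce to Lemma \ref{lem329} by encoding the extra constraint ``$p \subset \Sigma(\mathfrak{T}, \mathcal{M})$'' directly into the TCI. Given $\mathfrak{T} = (T, \sigma, \dot{\mathcal{U}}, \vartheta)$ and $p \in [\mathcal{L}_{\mathfrak{T}}]^{<\omega}$, I would let $c_1, \dots, c_k$ be the finitely many universe parameters of $\mathfrak{T}$ that appear inside the sentences of $p$, introduce fresh constant symbols $\dot{c}_1, \dots, \dot{c}_k$, extend $\vartheta$ by $\dot{c}_i \mapsto (\{c_i\}, 1)$, and rewrite each element of $p$ using these constants to obtain a finite set $p^*$ of first-order sentences in the enlarged signature. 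Let $\mathfrak{T}_p$ be the resulting TCI with theory $T \cup p^*$. By construction, a structure is a model of $\mathfrak{T}_p$ in some outer model of $V$ precisely when its $\sigma$-reduct is a model of $\mathfrak{T}$ whose $\mathfrak{T}$-specific atomic diagram contains $p$; hence $p \in P(\mathfrak{T})$ if and only if $\mathfrak{T}_p$ is consistent. Since $p$ is finite and each $c_i$ already belongs to $trcl(\mathfrak{T})$, the enrichment does not change cardinalities: $|trcl(\mathfrak{T}_p)| = |trcl(\mathfrak{T})|$, so $|\mathfrak{A}_{\mathfrak{T}_p}| = |\mathfrak{A}_{\mathfrak{T}}|$. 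Applying Lemma \ref{lem329} to $\mathfrak{T}_p$ with $\lambda := |\mathfrak{A}_{\mathfrak{T}}|$ yields
$$p \in P(\mathfrak{T}) \iff \ \Vdash^V_{Col(\omega, |\mathfrak{A}_{\mathfrak{T}}|)} \exists \mathcal{M} \ (\mathcal{M} \models^* \mathfrak{T}_p).$$
Translating the right-hand side back through the bijection between models of $\mathfrak{T}_p$ and models of $\mathfrak{T}$ with $p$ in their diagram gives exactly the defining clause of $P'(\mathfrak{T})$.

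For uniformity, observe that the definition of $P'$ only refers to $\mathfrak{T}$, $\mathfrak{A}_{\mathfrak{T}}$, $\mathcal{L}_{\mathfrak{T}}$, $\Sigma(\mathfrak{T}, \cdot)$, and the forcing relation $\Vdash^V_{Col(\omega, |\mathfrak{A}_{\mathfrak{T}}|)}$, all of which are uniformly $V$-definable from $\mathfrak{T}$ (via Proposition \ref{prop322} and the definability of forcing relations in $V$). The equality $P(\mathfrak{T}) = P'(\mathfrak{T})$ therefore transfers this uniformity to $P$.

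The main technical obstacle is the cardinality bookkeeping guaranteeing that $|trcl(\mathfrak{T}_p)| = |trcl(\mathfrak{T})|$, which is what allows the $\lambda$ hypothesis of Lemma \ref{lem329} to be satisfied with $\lambda = |\mathfrak{A}_{\mathfrak{T}}|$ independently of $p$. This requires a careful syntactic accounting of the augmentation $\mathfrak{T} \mapsto \mathfrak{T}_p$, but boils down to the finiteness of $p$ and of the set of new constants adjoined, each of which is coded by a hereditarily small set relative to $trcl(\mathfrak{T})$.
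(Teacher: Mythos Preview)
Your overall strategy---encode the finite condition $p$ into an augmented TCI $\mathfrak{T}_p$ and then invoke Lemma~\ref{lem329}---is sound, and the cardinality bookkeeping you flag is correct. The paper's own proof is just a pointer to Lemma~3.35 of \cite{myself}, which is essentially a direct reprise of the argument behind Lemma~\ref{lem329} carried out with the extra parameter $p$ in tow; your reduction is the natural modular version of that.

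There is, however, a concrete gap in the construction of $\mathfrak{T}_p$. The set $\mathcal{L}_{\mathfrak{T}}$ contains not only (negated) atomic sentences over $\sigma$ but also the literals $\ulcorner \dot{\mathcal{U}}(c) \urcorner$ and $\ulcorner \neg\,\dot{\mathcal{U}}(c) \urcorner$ for each $c$ in the universe bound, and $\dot{\mathcal{U}}$ is \emph{not} a symbol of $\sigma$. If $p$ contains $\ulcorner \neg\,\dot{\mathcal{U}}(c) \urcorner$, then any $\mathcal{M}$ with $p \subset \Sigma(\mathfrak{T}, \mathcal{M})$ must satisfy $c \notin M$; but adjoining a constant $\dot{c}$ with constraint $(\{c\},1)$ forces $\mathcal{I}(\dot{c}) \in \{c\} \cap M$, hence $c \in M$, in every model of $\mathfrak{T}_p$. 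Your $\mathfrak{T}_p$ is then inconsistent even though $p \in P(\mathfrak{T})$, and the claimed bijection between models of $\mathfrak{T}_p$ and models of $\mathfrak{T}$ with $p$ in their diagram breaks. Nor can you simply write $\ulcorner \neg\,\dot{\mathcal{U}}(\dot{c}) \urcorner$ into $p^*$, since $\dot{\mathcal{U}} \notin \sigma'$. The repair is routine---for each $\ulcorner \neg\,\dot{\mathcal{U}}(c) \urcorner \in p$ either shrink the universe bound from $y$ to $y \setminus \{c\}$ (restricting the other constraint sets accordingly, which is harmless since $M^n \subset (y \setminus \{c\})^n$ anyway), or add a fresh unary relation $\dot{R}_c$ with constraint $(\{c\},1)$ together with the sentence $\ulcorner \forall x\ \neg\,\dot{R}_c(x) \urcorner$---and either patch leaves $|trcl(\mathfrak{T}_p)| = |trcl(\mathfrak{T})|$ intact, so the rest of your argument survives.
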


\begin{proof}
Noting that $|\mathfrak{A}_{\mathfrak{T}}| = |trcl(\mathfrak{A}_{\mathfrak{T}})|$, this is essentially the proof of Lemma 3.35 of \cite{myself} with different nomenclature.
\end{proof}

\begin{defi}
For each TCI $\mathfrak{T} \in V$, set 
\begin{equation*}
    \mathbb{P}(\mathfrak{T}) := (P(\mathfrak{T}), \supset \cap \ P(\mathfrak{T})) \text{.}
\end{equation*}
\end{defi}

\begin{rem}
Now that $\mathbb{P}(\mathfrak{T})$ has been defined for all TCIs $\mathfrak{T}$, it is clear from Lemma \ref{prop252} and Proposition \ref{prop343} that Question 5.70 of \cite{myself} can be answered in the affirmative. In other words, letting
\begin{equation*}
    \mathcal{C} := \{\mathfrak{T} : \mathfrak{T} \text{ is a } \Sigma_1 \text{ TCI}\} \text{,}
\end{equation*}
we can conclude the following:
\begin{enumerate}[label=(\alph*)]
    \item $\mathcal{C} \subsetneq \{\mathfrak{T} : \mathfrak{T} \text{ is a } \Pi_2 \text{ TCI}\}$, and
    \item for each forcing notion $\mathbb{P}$, there is $\mathfrak{T} \in \mathcal{C}$ for which
    \begin{itemize}
        \item $\mathbb{P}$ and $\mathbb{P}(\mathfrak{T})$ are forcing equivalent, and
        \item 
        \!
        $\begin{aligned}[t]
            & \{V[\mathcal{M}] : \mathcal{M} \models^* \mathfrak{T} \text{ in an outer model of } V\} \\
            & \mspace{70mu} = \{V[g] : g \text{ is } \mathbb{P} \text{-generic over } V\} \text{,}
        \end{aligned}$
    \end{itemize}
    so that also
    \begin{align*}
        & \{V[\mathcal{M}] : \mathcal{M} \models^* \mathfrak{T} \text{ in an outer model of } V\} \\
        & = \{V[g] : g \text{ is } \mathbb{P}(\mathfrak{T}) \text{-generic over } V\} \text{.}
    \end{align*}
\end{enumerate} 
\end{rem}

The definable function $\mathfrak{T} \mapsto \mathfrak{T}(\mathbb{P}(\mathfrak{T}))$, restricted to $\mathsf{\Pi^M_2}$, will be our witness to $\mathsf{\Pi^M_2} \leq^M \mathsf{Fg}$. A trivial observation is that \ref{3441} and \ref{3442} of Proposition \ref{lem344} hold for $\mathbb{P}(\mathfrak{T})$ and $P(\mathfrak{T})$ in place of $\mathbb{P}$ and $P$ respectively. Furthermore, it is always true that
\begin{equation*}
    \Vdash_{\mathbb{P}(\mathfrak{T})} \bigcup \dot{G} \subset \mathcal{L}_{\mathfrak{T}} \text{,}
\end{equation*}
so we are left to prove
\begin{equation}\label{32}
    \Vdash_{\mathbb{P}(\mathfrak{T})} \text{``} \dot{G} \text{ witnesses a } (\mathbb{P}(\mathfrak{T}), \dot{V}) \text{-generic model of } \mathfrak{T} \text{''}
\end{equation}
for every consistent $\Pi_2$ TCI $\mathfrak{T}$. In \cite{myself} this is done by appealing to the more general framework of forcing with language fragments. Fix a consistent $\Pi_2$ TCI $\mathfrak{T}$. Let us hereby briefly outline the proof of (\ref{32}).

In the aforementioned forcing framework, we allow potentially any set to be interpreted as a language, by extending the negation operator from classical first order logic to all sets. In other words, we define a canonical negation function $\neg$ on $V$ as follows: 
\begin{align*}
    \neg x := \neg(x) = 
    \begin{cases}
        y & \text{if } x = \ulcorner \neg y \urcorner \text{ for some } y \\
        \ulcorner \neg x \urcorner & \text{otherwise}.
    \end{cases}
\end{align*}
A set $\mathcal{L}$ is \emph{closed under negation} iff for each $\phi \in \mathcal{L}$, $\neg \phi \in \mathcal{L}$. The aim of the framework is to study certain definable subsets of a set closed under negation from the perspective of a larger structure. 

A structure $\mathfrak{A}$ is $\mathcal{L}$\emph{-suitable} iff it expands on a model of a sufficiently strong set theory ($\mathsf{ZFC -}$ $\mathsf{Powerset}$ is more than enough) and $\mathcal{L}$ is a definable class in $\mathfrak{A}$. We define the language $\mathcal{L}^*_{\mathfrak{A}}$ by enlarging the signature of $\mathfrak{A}$ with members of its base sets as constants and a fresh unary relation symbol $\ulcorner E \urcorner$. Morally, $\ulcorner E \urcorner$ is to be interpreted as a subset of $\mathcal{L}$ when $\mathfrak{A}$ is $\mathcal{L}$-suitable. $\mathcal{L}^*_{\mathfrak{A}}$ thus enables us to impose first-order requirements on subsets of $\mathcal{L}$. 

We say ``$\Sigma$ $\Gamma(\mathcal{L}, \mathfrak{A})$-certifies $p$'' iff 
\begin{itemize}
    \item $\mathcal{L}$ is closed under negation and $\mathfrak{A}$ is $\mathcal{L}$-suitable,
    \item $\mathfrak{A}$, augmented with the predicate $\Sigma$ that interprets $\ulcorner E \urcorner$, satisfies the theory $\Gamma \subset \mathcal{L}^*_{\mathfrak{A}}$, and
    \item $p \subset \Sigma$.
\end{itemize}
Syntactically, it makes sense to talk about $\Pi_n$ and $\Sigma_n$ formulas and sentences in $\mathcal{L}^*_{\mathfrak{A}}$, although these classes are defined a little differently from their counterparts in a typical set-theoretic context. Such a syntactic classification turns out to have interesting implications on forcing constructions.

There are a few lemmas, in varying degrees of generality, that connect genericity to relations akin to ``$\Gamma(\mathcal{L}, \mathfrak{A})$-certification''. The following is the most relevant to our intended use case.

\begin{lem}\label{lem349}
Let $W$, $\lambda$, $\mathfrak{A}$, $\mathcal{L}$, $\Gamma$, $P$, $\mathbb{P}$ and $g$ be such that
\begin{itemize}
    \item $\Gamma$ contains only $\Pi_2$ $\mathcal{L}^*_{\mathfrak{A}}$ sentences,
    \item $|trcl(\mathfrak{A})| \leq \lambda$,
    \item $P = \{p \in [\mathcal{L}]^{< \omega} : \ \Vdash_{Col(\omega, \lambda)} \exists \Sigma \ (``\Sigma \ \Gamma(\mathcal{L}, \mathfrak{A}) \text{-certifies } p")\} \neq \emptyset$,
    \item $\mathbb{P} = (P, \supset \cap \ P)$, 
    \item $\mathbb{P}$ is a definable class in $\mathfrak{A}$,
    \item $W$ is an outer model of $V$, and
    \item $g \in W$ is a $\mathbb{P}$-generic filter over $\mathfrak{A}$.
\end{itemize}
Then $\bigcup g$ $\Gamma(\mathcal{L}, \mathfrak{A})$-certifies $\emptyset$.

In particular, if $g$ is $\mathbb{P}$-generic over $V$, then $\bigcup g$ $\Gamma(\mathcal{L}, \mathfrak{A})$-certifies $\emptyset$ in $V[g] = V[\bigcup g]$.
\end{lem}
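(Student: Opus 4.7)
The plan is to show, for every sentence $\phi \in \Gamma$, that $(\mathfrak{A}, \bigcup g) \models \phi$; combined with the trivial inclusion $\emptyset \subset \bigcup g$, this gives that $\bigcup g$ $\Gamma(\mathcal{L}, \mathfrak{A})$-certifies $\emptyset$. Fix $\phi = \ulcorner \forall \vec{x} \, \exists \vec{y} \, \varphi(\vec{x}, \vec{y}) \urcorner \in \Gamma$ with $\varphi$ quantifier-free in $\mathcal{L}^*_{\mathfrak{A}}$. For each tuple $\vec{a}$ from $A$, I define
\[
D_{\phi, \vec{a}} := \{q \in P : \exists \vec{b} \in A^{|\vec{y}|} \text{ such that } q \text{ already contains the finitely many } E\text{-literals that pin } \varphi(\vec{a}, \vec{b}) \text{ down to true in } (\mathfrak{A}, \cdot)\}.
\]
The defining feature is that if $q \in D_{\phi, \vec{a}}$ with witness $\vec{b}$, and $q \subset \Sigma' \subset \mathcal{L}$, then $(\mathfrak{A}, \Sigma') \models \varphi(\vec{a}, \vec{b})$; we shall apply this with $\Sigma' = \bigcup g$.

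The heart of the proof is density of $D_{\phi, \vec{a}}$ in $\mathbb{P}$. Fix $p \in P$. By the definition of $P$, some $Col(\omega, \lambda)$-generic extension $V[h]$ of $V$ contains a certifier $\Sigma$ of $p$; inside $V[h]$, $(\mathfrak{A}, \Sigma) \models \Gamma \ni \phi$, so some $\vec{b} \in A^{|\vec{y}|}$ satisfies $(\mathfrak{A}, \Sigma) \models \varphi(\vec{a}, \vec{b})$, and since $\varphi$ is quantifier-free, this truth is witnessed by a finite set $q_0 \subset \Sigma$ of $E$-literals drawn from $\mathcal{L}$. Let $q := p \cup q_0$. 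Then $\Sigma$ still certifies $q$, and weak homogeneity of $Col(\omega, \lambda)$ promotes ``some condition forces a certifier of $q$ to exist'' into $q \in P$. Thus $V[h]$ sees specific $\vec{b}, q_0$ satisfying ``$p \cup q_0 \in P$ and $p \cup q_0$ decides $\varphi(\vec{a}, \vec{b})$ positively''; this statement is $\Delta_0$ with parameters in $V$ (the witness quantifier is bounded by $A^{|\vec{y}|}$ and $[\mathcal{L}]^{<\omega}$), so it holds in $V$ as well, giving the desired extension of $p$ in $D_{\phi, \vec{a}}$.

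A routine check confirms that $D_{\phi, \vec{a}}$ is definable over $\mathfrak{A}$ with parameters in $A$: $\mathbb{P}$ is a definable class in $\mathfrak{A}$ by hypothesis, the witnessing existential is bounded by $A$ and $[\mathcal{L}]^{<\omega}$ (both in $\mathfrak{A}$), and the decision clause is a $\Delta_0$ Boolean evaluation over $\mathfrak{A}$. Since $g$ is $\mathbb{P}$-generic over $\mathfrak{A}$, $g \cap D_{\phi, \vec{a}} \neq \emptyset$. Picking $q \in g \cap D_{\phi, \vec{a}}$ with witness $\vec{b}$ and applying the defining feature of $D_{\phi, \vec{a}}$ to $\bigcup g \supset q$ yields $(\mathfrak{A}, \bigcup g) \models \varphi(\vec{a}, \vec{b})$, hence $(\mathfrak{A}, \bigcup g) \models \exists \vec{y} \, \varphi(\vec{a}, \vec{y})$. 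Since $\vec{a}$ was arbitrary and $\phi$ ranges over $\Gamma$, $(\mathfrak{A}, \bigcup g) \models \Gamma$. The ``in particular'' clause follows because every dense subset of $\mathbb{P}$ definable over $\mathfrak{A}$ is in particular present in $V$, and $V[\bigcup g] = V[g]$ by the same recovery argument used in Lemma \ref{lem344}.

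The main obstacle will be the density step — specifically transferring the witness $(\vec{b}, q_0)$ from $V[h]$ back to $V$ and confirming $p \cup q_0 \in P$ in the ground model. Two ingredients do the work: first, $\vec{b}$ and $q_0$ live in $V$-sets, making the relevant existential a $\Delta_0$ (hence absolute) statement with $V$-parameters; second, the weak homogeneity of $Col(\omega, \lambda)$ converts ``a certifier of $p \cup q_0$ happens to exist in $V[h]$'' into ``$\Vdash_{Col(\omega, \lambda)}^V \exists$ certifier of $p \cup q_0$'', which is precisely the definition of $p \cup q_0 \in P$.
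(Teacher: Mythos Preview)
The paper's own proof is merely a citation to Lemma~3.39 of \cite{myself}, so there is no argument here to compare against; your density approach is the natural one and is almost certainly what underlies the cited lemma.

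There is, however, a real gap in your ``defining feature'' of $D_{\phi,\vec a}$. You assert that if $q\in D_{\phi,\vec a}$ with witness $\vec b$ and $q\subset\Sigma'\subset\mathcal L$, then $(\mathfrak A,\Sigma')\models\varphi(\vec a,\vec b)$. But $q$ is a finite subset of $\mathcal L$, and the inclusion $q\subset\Sigma'$ records only \emph{positive} $E$-information: it forces certain $E(c)$ to hold, never any $\neg E(c)$. (Putting $\neg c$ into $q$ gives $E(\neg c)$, not $\neg E(c)$; nothing in the bare certification relation links the two.) If the quantifier-free $\varphi(\vec a,\vec b)$ requires some $\neg E(c)$, no condition can pin this down for arbitrary $\Sigma'\supset q$. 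Concretely: take $\Gamma=\{\forall n\,\exists m>n\ \neg E(a_m)\}$ over a countable $\mathcal L=\{a_n,\neg a_n:n<\omega\}$; then every finite $p$ has a certifier (namely $p$ itself), so $P=[\mathcal L]^{<\omega}$, each $\{q:c\in q\}$ is dense, and $\bigcup g=\mathcal L$, which fails $\Gamma$. Thus your argument as written proves a false statement for the naive reading of ``$\Pi_2$''.

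The fix is hidden in the paper's remark that the $\Pi_n/\Sigma_n$ classes for $\mathcal L^*_{\mathfrak A}$ are ``defined a little differently'' in \cite{myself}. That framework constrains the matrix so that $\exists\vec y\,\varphi(\vec a,\vec y)$ is upward-persistent along conditions --- equivalently, so that a finite $q$ really can decide the relevant $E$-literals (for instance by treating $\neg E(c)$ via the negation-closure together with completeness/consistency clauses baked into $\Gamma$). Your density and absoluteness steps are fine once that monotonicity is available; what is missing is either an explicit appeal to the special definition of $\Pi_2$ from \cite{myself}, or an additional density argument showing that for each $c\in\mathcal L$ the generic decides $c\notin\bigcup g$ whenever $c$ is not forced in (via conditions $q$ with $q\cup\{c\}\notin P$), which is the form the argument must take when genuine $\neg E$-literals occur.
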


\begin{proof}
This is (implied by) Lemma 3.39 of \cite{myself}.
\end{proof}

The next lemma allows us to transform an arbitrary $\Pi_2$ TCIs into a form amenable with our forcing framework, so that Lemma \ref{lem349} can be applied.

\begin{lem}\label{lem350}
For each $\Pi_2$ TCI $\mathfrak{T}$ there is $\Gamma_{\mathfrak{T}}$ such that
\begin{enumerate}[label=(\arabic*)]
    \item\label{3551} $\Gamma_{\mathfrak{T}}$ contains only $\Pi_2$ $(\mathcal{L}_{\mathfrak{T}})^*_{\mathfrak{A}_{\mathfrak{T}}}$ sentences, and
    \item\label{3552} for every set $x$,
    \begin{equation*}
        \exists \mathcal{M} \ (\mathcal{M} \models^* \mathfrak{T} \text{ and } \Sigma(\mathfrak{T}, \mathcal{M}) = x) \iff x \ \Gamma_{\mathfrak{T}} (\mathcal{L}_{\mathfrak{T}}, \mathfrak{A}_{\mathfrak{T}})\text{-certifies } \emptyset \text{.}
    \end{equation*}
\end{enumerate}
\end{lem}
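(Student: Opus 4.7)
The plan is to translate the condition ``$\mathcal{M} \models^* \mathfrak{T}$ and $\Sigma(\mathfrak{T}, \mathcal{M}) = x$'' into $\Pi_2$ sentences over $(\mathcal{L}_{\mathfrak{T}})^*_{\mathfrak{A}_{\mathfrak{T}}}$, using the fresh unary predicate $\ulcorner E \urcorner$ as a syntactic placeholder for the $\mathfrak{T}$-specific atomic diagram. Fix a $\Pi_2$ TCI $\mathfrak{T} = (T, \sigma, \dot{\mathcal{U}}, \vartheta)$ with $\vartheta(\dot{\mathcal{U}}) = (U_0, z_0)$. By the proof of Proposition \ref{prop322}, $\mathcal{L}_{\mathfrak{T}} \in \mathfrak{A}_{\mathfrak{T}}$, and the coding operations $\vec{c} \mapsto \ulcorner \dot{X}(\vec{c}) \urcorner$ (for each symbol $\dot{X} \in \sigma \cup \{\dot{\mathcal{U}}\}$) are $\Delta_1$-definable in $\mathfrak{A}_{\mathfrak{T}}$ from $\mathfrak{T}$. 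Since $\mathfrak{A}_{\mathfrak{T}}$ models a strong enough fragment of set theory and all relevant parameters lie in its base, we may freely use these codings, together with bounded quantifiers over $U_0$ and $\mathcal{L}_{\mathfrak{T}}$, inside formulas of $(\mathcal{L}_{\mathfrak{T}})^*_{\mathfrak{A}_{\mathfrak{T}}}$.

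$\Gamma_{\mathfrak{T}}$ will comprise four groups of sentences, all $\Pi_2$ (the first three will in fact be $\Pi_1$). First, \emph{admissibility of $\ulcorner E \urcorner$}: containment in $\mathcal{L}_{\mathfrak{T}}$ together with the usual consistency/completeness conditions relative to the negation operation on $\mathcal{L}_{\mathfrak{T}}$. Second, \emph{translation of the constraints $\vartheta$}: for each $\dot{X} \in \sigma$ with $\vartheta(\dot{X}) = (y, z)$, a sentence asserting that $\ulcorner E \urcorner$-atomic facts involving $\dot{X}$ are restricted to parameters in $y$ (with equality rather than inclusion when $z = 1$), together with functionality/totality axioms for function symbols and analogous handling of $\dot{\mathcal{U}}$ through the $\ulcorner \dot{\mathcal{U}}(c) \urcorner$ codes. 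Third, \emph{compatibility with the universe}: every relation or function fact in $\ulcorner E \urcorner$ forces its arguments to also satisfy $\ulcorner \dot{\mathcal{U}}(\cdot) \urcorner$ in $\ulcorner E \urcorner$. Fourth and central, \emph{translations of the $T$-axioms}: each $\forall \vec{x}\, \exists \vec{y}\, \varphi(\vec{x}, \vec{y}) \in T$ with $\varphi$ quantifier-free becomes
\[
\forall \vec{x} \in U_0 \, \Big( \bigwedge_i \ulcorner E \urcorner(\ulcorner \dot{\mathcal{U}}(x_i) \urcorner) \to \exists \vec{y} \in U_0 \, \Big( \bigwedge_j \ulcorner E \urcorner(\ulcorner \dot{\mathcal{U}}(y_j) \urcorner) \wedge \varphi^*(\vec{x}, \vec{y}) \Big) \Big),
\]
where $\varphi^*$ is obtained by unwinding equality and function terms via the $\Delta_1$ codings and replacing each remaining atomic subformula $\dot{X}(\vec{t})$ by $\ulcorner E \urcorner(\ulcorner \dot{X}(\vec{t}) \urcorner)$.

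Verification of \ref{3552} then splits naturally. The forward direction is essentially a definitional unfolding: if $\mathcal{M} \models^* \mathfrak{T}$ then $x := \Sigma(\mathfrak{T}, \mathcal{M})$ satisfies groups one through three by the very definitions involved, and satisfies group four because $\mathcal{M} \models T$. For the converse, given $x$ that $\Gamma_{\mathfrak{T}}(\mathcal{L}_{\mathfrak{T}}, \mathfrak{A}_{\mathfrak{T}})$-certifies $\emptyset$, groups one through three guarantee $x$ is an admissible atomic diagram in $\mathcal{L}_{\mathfrak{T}}$ meeting every $\vartheta$-constraint, so by the injectivity property (item \ref{3245} of Proposition \ref{prop322}) there is a unique structure $\mathcal{M}$ with $\Sigma(\mathfrak{T}, \mathcal{M}) = x$; group four then forces $\mathcal{M} \models T$, giving \ref{3552}, while the construction itself delivers \ref{3551}.

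The main obstacle I anticipate is the bookkeeping behind $\varphi^*$: atomic subformulas of $\varphi$ may contain equality, constants, and nested function terms, and translating these into $\ulcorner E \urcorner$-atomic assertions requires introducing Skolem-style witness variables for each intermediate subterm (absorbed into the outer $\exists \vec{y}$ block) whose values are pinned down by corresponding $\ulcorner E \urcorner$-facts. This is precisely what preserves the prenex $\Pi_2$ structure of the translated axiom. The $\Delta_1$-definability and absoluteness of the coding maps from Proposition \ref{prop322} are what make the rewriting uniform across all axioms of $T$ and across all $\Pi_2$ TCIs, so that $\mathfrak{T} \mapsto \Gamma_{\mathfrak{T}}$ is itself definable — an implicit requirement for the eventual application of Lemma \ref{lem349} to establish (\ref{32}).
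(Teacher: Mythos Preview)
Your proposal is correct and carries out explicitly what the paper defers to external references (Lemmas 5.17 and 5.22 of \cite{myself}); the four-group decomposition and the relativisation of each $T$-axiom via $\ulcorner E \urcorner(\ulcorner \dot{\mathcal{U}}(\cdot)\urcorner)$ is exactly the natural translation, and your remark about absorbing intermediate term-witnesses into the existential block is the right way to handle nested function symbols while preserving the $\Pi_2$ form. Two minor corrections: the totality axiom for each function symbol is genuinely $\Pi_2$, not $\Pi_1$, so your parenthetical that groups one through three are all $\Pi_1$ is slightly off (harmless, since $\Pi_2$ is all that is claimed); and in the converse direction you want the \emph{construction} in the proof of Proposition~\ref{prop322} rather than clause~\ref{3245} --- injectivity of $R_{\mathfrak{T}}$ gives uniqueness of $\mathcal{M}$, but it is the explicit decoding of an admissible diagram back into a structure that furnishes existence.
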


\begin{proof}
This is implied by Lemmas 5.17 and 5.22 of \cite{myself} (cf. Proposition \ref{prop322}).
\end{proof}

We can now derive (\ref{32}) from Lemmas \ref{lem349} and \ref{lem350}. 

\begin{thm}[Lau, \cite{myself}]\label{thm351}
Let $\mathfrak{T} = (T, \sigma, \dot{\mathcal{U}}, \vartheta)$ be a consistent $\Pi_2$ TCI. Then every $\mathbb{P}(\mathfrak{T})$-generic filter over $V$ witnesses a $(\mathbb{P}(\mathfrak{T}), V)$-generic model of $\mathfrak{T}$, or equivalently, (\ref{32}) holds.
\end{thm}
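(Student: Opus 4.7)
The plan is to invoke Lemmas \ref{lem349} and \ref{lem350} in tandem, with roles $\mathfrak{A} := \mathfrak{A}_{\mathfrak{T}}$, $\mathcal{L} := \mathcal{L}_{\mathfrak{T}}$, $\Gamma := \Gamma_{\mathfrak{T}}$ (obtained from Lemma \ref{lem350}), $\lambda := |trcl(\mathfrak{A}_{\mathfrak{T}})|$ and $\mathbb{P} := \mathbb{P}(\mathfrak{T})$. The first task is to verify that $P(\mathfrak{T})$ already coincides with the set $P$ featured in the hypothesis of Lemma \ref{lem349}. By Lemma \ref{lem350}(\ref{3552}), the statement ``$\exists \mathcal{M} \ (\mathcal{M} \models^* \mathfrak{T} \text{ and } p \subset \Sigma(\mathfrak{T}, \mathcal{M}))$'' is semantically equivalent to ``$\exists \Sigma \ (\Sigma \ \Gamma_{\mathfrak{T}}(\mathcal{L}_{\mathfrak{T}}, \mathfrak{A}_{\mathfrak{T}})\text{-certifies } p)$'', since any $\Sigma$ certifying $p$ must be of the form $\Sigma(\mathfrak{T}, \mathcal{M})$ for a unique $\mathcal{M} \models^* \mathfrak{T}$, and vice versa. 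Chaining this with the $P(\mathfrak{T}) = P'(\mathfrak{T})$ identification (whose proof mirrors Lemma \ref{lem329}'s use of $Col(\omega, \lambda)$-forcing to absorb existence statements over the outer multiverse into a single forcing assertion) yields
$$P(\mathfrak{T}) = \{p \in [\mathcal{L}_{\mathfrak{T}}]^{<\omega} : \ \Vdash_{Col(\omega, \lambda)} \exists \Sigma \ (``\Sigma \ \Gamma_{\mathfrak{T}}(\mathcal{L}_{\mathfrak{T}}, \mathfrak{A}_{\mathfrak{T}})\text{-certifies } p")\},$$
which is exactly the form Lemma \ref{lem349} demands. The consistency of $\mathfrak{T}$ gives $\emptyset \in P(\mathfrak{T})$, so $P(\mathfrak{T}) \neq \emptyset$, and the remaining hypotheses of Lemma \ref{lem349} (definability of $\mathbb{P}(\mathfrak{T})$ inside $\mathfrak{A}_{\mathfrak{T}}$, the $\Pi_2$ character of $\Gamma_{\mathfrak{T}}$ from Lemma \ref{lem350}(\ref{3551}), and $|trcl(\mathfrak{A}_{\mathfrak{T}})| \leq \lambda$) are immediate from the construction of $\mathbb{P}(\mathfrak{T})$.

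With the setup in place, let $g$ be any $\mathbb{P}(\mathfrak{T})$-generic filter over $V$. Lemma \ref{lem349} then delivers, inside $V[g]$, that $\bigcup g$ $\Gamma_{\mathfrak{T}}(\mathcal{L}_{\mathfrak{T}}, \mathfrak{A}_{\mathfrak{T}})$-certifies $\emptyset$. A second application of Lemma \ref{lem350}(\ref{3552}), this time in the reverse direction with $x := \bigcup g$, produces $\mathcal{M} \in V[g]$ satisfying $\mathcal{M} \models^* \mathfrak{T}$ and $\Sigma(\mathfrak{T}, \mathcal{M}) = \bigcup g$. This is precisely the statement that $g$ witnesses $\mathcal{M}$ being a $(\mathbb{P}(\mathfrak{T}), V)$-generic model of $\mathfrak{T}$, establishing (\ref{32}).

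The main technical obstacle is the initial matching of $P(\mathfrak{T})$ with the form required by Lemma \ref{lem349}: one must compose the ``outer-model existence'' $\leftrightarrow$ ``$Col(\omega, \lambda)$-forcing'' absoluteness with the TCI-to-certification bridge of Lemma \ref{lem350}, tracking the free parameter $p$ uniformly throughout both translations. Once this bookkeeping is accepted, the theorem reduces to a clean one-two punch of the machinery from \cite{myself}, with no further combinatorics about $\mathbb{P}(\mathfrak{T})$ needing to be developed here.
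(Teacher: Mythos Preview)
Your proposal is correct and follows essentially the same route as the paper's proof: both arguments obtain $\Gamma_{\mathfrak{T}}$ from Lemma~\ref{lem350}, verify that the substitutions $\mathfrak{A}_{\mathfrak{T}}$, $\mathcal{L}_{\mathfrak{T}}$, $\Gamma_{\mathfrak{T}}$, $|\mathfrak{A}_{\mathfrak{T}}| = |trcl(\mathfrak{A}_{\mathfrak{T}})|$, $P(\mathfrak{T})$, $\mathbb{P}(\mathfrak{T})$ meet the hypotheses of Lemma~\ref{lem349}, and then read off the model $\mathcal{M}$ from $\bigcup g$ via the reverse direction of Lemma~\ref{lem350}\ref{3552}. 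Your explicit unpacking of why ``$\exists \mathcal{M} \ (\mathcal{M} \models^* \mathfrak{T}$ and $p \subset \Sigma(\mathfrak{T}, \mathcal{M}))$'' coincides with ``$\exists \Sigma \ (\Sigma$ $\Gamma_{\mathfrak{T}}(\mathcal{L}_{\mathfrak{T}}, \mathfrak{A}_{\mathfrak{T}})$-certifies $p)$'' is exactly what the paper records as item~(e) in its proof.
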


\begin{proof}
We first apply Lemma \ref{lem350} to obtain a $\Gamma_{\mathfrak{T}}$ such that
\begin{enumerate}[label=(\alph*)]
    \item\label{3521} $\Gamma_{\mathfrak{T}}$ contains only $\Pi_2$ $(\mathcal{L}_{\mathfrak{T}})^*_{\mathfrak{A}_{\mathfrak{T}}}$ sentences, and
    \item\label{3522} for every set $x$,
    \begin{equation*}
        \exists \mathcal{M} \ (\mathcal{M} \models^* \mathfrak{T} \text{ and } \Sigma(\mathfrak{T}, \mathcal{M}) = x) \iff x \ \Gamma_{\mathfrak{T}} (\mathcal{L}_{\mathfrak{T}}, \mathfrak{A}_{\mathfrak{T}})\text{-certifies } \emptyset \text{.}
    \end{equation*}
\end{enumerate}
Next, note that
\begin{enumerate}[label=(\alph*)]
    \setcounter{enumi}{2}
    \item\label{3523} Observation \ref{ob331} holds,
    \item $|\mathfrak{A}_{\mathfrak{T}}| = |trcl(\mathfrak{A}_{\mathfrak{T}})|$, and
    \item\label{3525} for all $x$ and $p$,
    \begin{equation*}
        x \ \Gamma_{\mathfrak{T}} (\mathcal{L}_{\mathfrak{T}}, \mathfrak{A}_{\mathfrak{T}})\text{-certifies } p \iff (x \ \Gamma_{\mathfrak{T}} (\mathcal{L}_{\mathfrak{T}}, \mathfrak{A}_{\mathfrak{T}})\text{-certifies } \emptyset \text{ and } p \subset x) \text{.}
    \end{equation*}
\end{enumerate}
Let $g$ be a $\mathbb{P}(\mathfrak{T})$-generic filter over $V$. Then the theorem follows directly from Lemma \ref{lem349}, as the hypotheses of said lemma are satisfied with
\begin{itemize}
    \item $V[g]$ in place of $W$,
    \item $|\mathfrak{A}_{\mathfrak{T}}|$ in place of $\lambda$,
    \item $\mathfrak{A}_{\mathfrak{T}}$ in place of $\mathfrak{A}$,
    \item $\mathcal{L}_{\mathfrak{T}}$ in place of $\mathcal{L}$,
    \item $\Gamma_{\mathfrak{T}}$ in place of $\Gamma$,
    \item $P(\mathfrak{T})$ in place of $P$, and
    \item $\mathbb{P}(\mathfrak{T})$ in place of $\mathbb{P}$,
\end{itemize}
bearing in mind \ref{3521} to \ref{3525}.
\end{proof}

It should be emphasised that the proof of Lemma \ref{lem350} provides a uniform way of constructing $\Gamma_{\mathfrak{T}}$ from any TCI $\mathfrak{T}$, such that \ref{3552} of the lemma is satisfied. If in addition, $\mathfrak{T}$ is $\Pi_2$, then the $\Gamma_{\mathfrak{T}}$ constructed also satisfies \ref{3551} of Lemma \ref{lem350}. We shall hereby have $\Gamma_{\mathfrak{T}}$ denote the result of the aforementioned construction with $\mathfrak{T}$ as its starting point.

As a corollary, we observe a rather strong failure of the converse of Proposition \ref{prop338}.

\begin{cor}\label{cor356}
There are local definitions $X$ and $Y$ such that $X <^M Y$ and 
\begin{equation*}
    \bigcup \ ((\mathrm{Eval}^V)" Y)  \subsetneq \bigcup \ ((\mathrm{Eval}^V)" X) \text{.}
\end{equation*}
\end{cor}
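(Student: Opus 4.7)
The plan is to take $X := \{\mathfrak{T}_s : s \in V\}$, the $\Pi_0$ local method definition built in the proof of Proposition \ref{prop326}, and $Y := \mathsf{Fg}$. By Proposition \ref{prop326} we already have $\bigcup((\mathrm{Eval}^V)" X) = \mathbf{M}_S(V)$; by the definitions of $\mathsf{Fg}$ and $\mathfrak{T}(\cdot)$ (Proposition \ref{prop323}) we have $\bigcup((\mathrm{Eval}^V)" Y) = \mathbf{M}_F(V)$; and by Fact \ref{fact319} the latter is strictly contained in the former. What remains is to check $X <^M Y$.

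For $X \leq^M Y$, observe $X \subset \mathsf{\Pi^M_2}$ (each $\mathfrak{T}_s$ is even $\Pi_0$), so the witness $\mathfrak{T} \mapsto \mathfrak{T}(\mathbb{P}(\mathfrak{T}))$ to $\mathsf{\Pi^M_2} \leq^M \mathsf{Fg}$ --- justified by Lemma \ref{lem344} together with Theorem \ref{thm351} --- restricts to a definable map $F : X \to Y$ for which $\emptyset \neq \mathrm{Eval}^V(F(\mathfrak{T}_s)) \subset \mathrm{Eval}^V(\mathfrak{T}_s)$ holds for every consistent $\mathfrak{T}_s \in X$.

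The substantive step is $Y \not\leq^M X$, which I would argue by contradiction. Suppose $G : \mathsf{Fg} \to X$ witnessed $\mathsf{Fg} \leq^M X$, and pick any non-trivial forcing notion $\mathbb{P} \in V$. Since $V$ is a CTM, $\mathbb{P}$-generic filters exist in outer models, so $\mathfrak{T}(\mathbb{P})$ is consistent; write $G(\mathfrak{T}(\mathbb{P})) = \mathfrak{T}_s$. The non-triviality of $\mathbb{P}$ precludes any $\mathbb{P}$-generic filter from lying in $V$, so every member of $\mathrm{Eval}^V(\mathfrak{T}(\mathbb{P}))$ is a proper outer model of $V$; in particular $V \notin \mathrm{Eval}^V(\mathfrak{T}(\mathbb{P}))$. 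However, $\mathfrak{T}_s$ has empty signature and interpretation constraint $\vartheta(\dot{\mathcal{U}}) = (s, 0)$, so the structure $\mathcal{M} := (\emptyset; \emptyset) \in V$ trivially satisfies $\mathcal{M} \models^* \mathfrak{T}_s$, whence $V = V[\mathcal{M}] \in \mathrm{Eval}^V(\mathfrak{T}_s) \subset \mathrm{Eval}^V(\mathfrak{T}(\mathbb{P}))$, a contradiction.

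The only place where any real checking is needed is the observation that the empty structure models every $\mathfrak{T}_s$, together with the fact that non-trivial set forcing never has $V$ itself as a generic extension; once both are in hand, the corollary falls out of material already developed in this section.
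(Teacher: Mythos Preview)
Your choice of $X=\{\mathfrak{T}_s:s\in V\}$ and $Y=\mathsf{Fg}$, and your arguments for the strict inclusion $\bigcup((\mathrm{Eval}^V)"Y)\subsetneq\bigcup((\mathrm{Eval}^V)"X)$ and for $X\leq^M Y$, coincide with the paper's. The difference lies in how you handle $Y\not\leq^M X$.

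The paper fixes a forcing notion $\mathbb{P}$ with $V\notin\mathrm{Eval}^V(\mathfrak{T}(\mathbb{P}))$ and then splits into cases on $s$: for finite $s$ it observes $\mathrm{Eval}^V(\mathfrak{T}_s)=\{V\}$, while for infinite $s$ it invokes the class-forcing/Jensen-coding argument behind Fact~\ref{fact319} to produce an element of $\mathrm{Eval}^V(\mathfrak{T}_s)$ lying outside $\mathbf{M}_F(V)$ altogether. Your argument is more economical: you simply note that $V\in\mathrm{Eval}^V(\mathfrak{T}_s)$ for \emph{every} $s$, since $\mathfrak{T}_s$ has empty theory and therefore has a model already in $V$. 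This avoids the case split and the appeal to heavy machinery, and in fact yields $\mathsf{Fg}\not\leq^M_w X$ just as the paper's argument does. What the paper's infinite-$s$ case buys in addition is an element of $\mathrm{Eval}^V(\mathfrak{T}_s)$ that is not a forcing extension for \emph{any} notion, not merely for the chosen $\mathbb{P}$; but that extra strength is not needed for the corollary.

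Two small points to tighten. First, relying on the empty structure is delicate if one follows the standard convention that first-order structures are non-empty; it is cleaner to take $\mathcal{M}:=(s;\emptyset)\in V$, which is always a model of $\mathfrak{T}_s$ and gives $V[\mathcal{M}]=V$. Second, ``non-trivial'' is not quite the right hypothesis on $\mathbb{P}$: a non-trivial poset may still have atoms, and then by Proposition~\ref{gpgeneric} it has generic filters in $V$, so $V\in\mathrm{Eval}^V(\mathfrak{T}(\mathbb{P}))$. You should instead take $\mathbb{P}$ atomless (Cohen forcing, say), or simply stipulate $V\notin\mathrm{Eval}^V(\mathfrak{T}(\mathbb{P}))$ as the paper does. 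With these two adjustments your argument goes through and is genuinely shorter than the paper's.
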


\begin{proof}
Let $\mathsf{St} := \{\mathfrak{T}_s : s \in V\}$ be as in Proposition \ref{prop326}. By (\ref{eq31}), 
\begin{equation*}
    \bigcup \ ((\mathrm{Eval}^V)" \mathsf{Fg})  \subsetneq \bigcup \ ((\mathrm{Eval}^V)" \mathsf{St}) \text{.}
\end{equation*}
Since $\mathsf{St} \subset \mathsf{\Pi^M_0}$, by Proposition \ref{prop338} and Theorem \ref{thm351}, $\mathsf{St} \leq^M \mathsf{Fg}$. We are left to show $\mathsf{Fg} \not\leq^M \mathsf{St}$. 

Choose any forcing notion $\mathbb{P}$ satisfying $V \not\in \mathrm{Eval}^V(\mathfrak{T}(\mathbb{P}))$. If $s$ is finite, then $\mathrm{Eval}^V(\mathfrak{T}_s) = \{V\} \not\subset \mathrm{Eval}^V(\mathfrak{T}(\mathbb{P}))$. Now assume $s$ is infinite, and let $f$ be a bijection from $|s|$ into $s$. Apply an argument similar to that through which (\ref{eq31}) was justified, to obtain some $r \subset \omega$ such that $L^V[r]$ is an outer model of $V$, but no outer model of $L^V[r]$ is a forcing extension of $V$. Then $V[f" r] \in \mathrm{Eval}^V(\mathfrak{T}_s)$ is an outer model of $L^V[r]$, so $\mathrm{Eval}^V(\mathfrak{T}_s) \not\subset \mathrm{Eval}^V(\mathfrak{T}(\mathbb{P}))$. We have thus proved that $\mathsf{Fg} \not\leq^M_w \mathsf{St}$, and this completes the proof.
\end{proof}

The proof of Corollary \ref{cor356} intimates that for TCIs with very simple theories, we can always construct a non-generic model. We cannot do the same for all $\Pi_2$ TCIs because of Proposition \ref{prop323}. Together, they make us wonder if a clear line can be drawn in $V$. Let 
\begin{align*}
    \mathrm{NG} := \{\mathfrak{T} \in V : \ & \mathfrak{T} \text{ is a } \Pi_2 \text{ TCI and} \\
    & \exists W \ \exists \mathcal{M} \! \in \! W \ \forall x \! \in \! W \ (W \text{ is an outer model of } V \text{ and} \\
    & \mathcal{M} \models^* \mathfrak{T} \text{ and } x \not\cong \mathcal{M} \text{ whenever } x \text{ is a } V \text{-generic model of } \mathfrak{T})\} \text{.}
\end{align*}

\begin{ques}[Lau, \cite{myself}]\label{q357}
Is $\mathrm{NG}$ definable in $V$?
\end{ques}

\subsection{A Strengthening}

In this subsection, we build on Theorem \ref{thm351} to achieve a strengthening of the statement ``$\mathsf{\Pi^M_2} \leq^M \mathsf{Fg}$''. This stronger statement appears in the form of Theorem \ref{thm368}. To start, let us recall some definitions and facts from order theory.

\begin{defi}
Let $\mathbb{P} = (P, \leq_{\mathbb{P}})$ be a forcing notion. A set $Q$ is an \emph{upward closed subset of} $\mathbb{P}$ iff $Q \subset P$ and for all $p, q \in P$, 
\begin{equation*}
    (q \in Q \text{ and } q \leq_{\mathbb{P}} p) \implies p \in Q \text{.}
\end{equation*}
\end{defi}

\begin{defi}
If $\mathbb{P} = (P, \leq_{\mathbb{P}})$ is a forcing notion and $p \in P$, we let $g_p (\mathbb{P})$ denote the set $$\{q \in P : p \not \! \! \bot_{\mathbb{P}} \ q\}.$$
\end{defi}

\begin{defi}
Let $\mathbb{P} = (P, \leq_{\mathbb{P}})$ be a forcing notion. A member $p$ of $P$ is an \emph{atom} of $\mathbb{P}$ iff $$\forall q_1 \ \forall q_2 \ ((q_1 \leq_{\mathbb{P}} p \text{ and } q_2 \leq_{\mathbb{P}} p) \implies q_1 \not \! \! \bot_{\mathbb{P}} \ q_2).$$
\end{defi}

\begin{prop}\label{gpgeneric}
If $\mathbb{P} = (P, \leq_{\mathbb{P}})$ is a forcing notion and $p$ is an atom of $\mathbb{P}$, then $g_p (\mathbb{P})$ is a $\mathbb{P}$-generic filter over $V$.
\end{prop}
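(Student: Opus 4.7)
The plan is to verify the two defining properties of a $\mathbb{P}$-generic filter over $V$ for the set $g_p(\mathbb{P}) = \{q \in P : p \not\bot_{\mathbb{P}} q\}$ in turn: first that it is a filter on $\mathbb{P}$, then that it meets every dense subset of $\mathbb{P}$ in $V$.

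For the filter check, I would proceed as follows. For upward closure, take $q \in g_p(\mathbb{P})$ and $r$ with $q \leq_{\mathbb{P}} r$. By definition of $g_p(\mathbb{P})$, pick $s \leq_{\mathbb{P}} p, q$; then $s \leq_{\mathbb{P}} p, r$ as well, so $p \not\bot_{\mathbb{P}} r$ and hence $r \in g_p(\mathbb{P})$. For downward directedness, take $q_1, q_2 \in g_p(\mathbb{P})$ and pick $s_i \leq_{\mathbb{P}} p, q_i$ for $i = 1, 2$. Here is where the atomicity of $p$ is invoked: since $s_1, s_2 \leq_{\mathbb{P}} p$, the atom condition forces $s_1 \not\bot_{\mathbb{P}} s_2$, so pick $t \leq_{\mathbb{P}} s_1, s_2$. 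Then $t \leq_{\mathbb{P}} q_1, q_2$, and $t \leq_{\mathbb{P}} p$ (via $s_1$), so $t$ witnesses $t \in g_p(\mathbb{P})$ as a common extension.

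For genericity, let $D \subset P$ be any dense subset of $\mathbb{P}$ in $V$ (or any set definable in the relevant structure, by the definition in the text). By density applied to $p \in P$, there exists $q \leq_{\mathbb{P}} p$ with $q \in D$. Since $q$ is below $p$ itself, $q \not\bot_{\mathbb{P}} p$, and so $q \in g_p(\mathbb{P}) \cap D$. Because dense sets make the auxiliary disjunct in the definition of ``meets'' collapse, this suffices for the general definition of meeting a dense set.

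Honestly, there is no real obstacle here — the proof is essentially a three-line unfolding of the definitions. The only place the atom hypothesis is used is in verifying downward directedness; the rest of the properties hold merely from $p$ being a fixed element of $P$ together with the observation that everything in $g_p(\mathbb{P})$ has a common extension with $p$ by definition.
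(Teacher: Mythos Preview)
Your proof is correct and follows essentially the same approach as the paper's: both verify genericity by extending $p$ into the given dense set, and both verify directedness by using the atom hypothesis to find a common extension of two witnesses below $p$. If anything, your version is slightly more thorough, since you explicitly check upward closure and note that the common extension $t$ lies in $g_p(\mathbb{P})$, whereas the paper leaves these points implicit.
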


\begin{proof}
If $D$ is dense in $\mathbb{P}$, then there is $q \in D$ with $q \leq_{\mathbb{P}} p$. Obviously, $q \in g_p (\mathbb{P})$. Therefore $g_p (\mathbb{P})$ is a $\mathbb{P}$-generic subset over $V$. To see that $g_p (\mathbb{P})$ is a filter, let $q_1$ and $q_2$ be members of $g_p (\mathbb{P})$. By the definition of $g_p (\mathbb{P})$, there are $r_1$ and $r_2$ such that 
\begin{itemize}
    \item $r_1 \leq_{\mathbb{P}} q_1$, 
    \item $r_1 \leq_{\mathbb{P}} p$,
    \item $r_2 \leq_{\mathbb{P}} q_2$,
    \item $r_2 \leq_{\mathbb{P}} p$.
\end{itemize}
As $p$ is an atom of $\mathbb{P}$, it must be the case that $r_1 \not \! \! \bot_{\mathbb{P}} \ r_2$, which means $q_1 \not \! \! \bot_{\mathbb{P}} \ q_2$.
\end{proof}

\begin{defi}
A forcing notion $\mathbb{P} = (P, \leq_{\mathbb{P}})$ is \emph{atomless} iff no member of $P$ is an atom of $\mathbb{P}$.
\end{defi}

A non-empty atomless forcing notion gives rise to many forcing extensions.

\begin{prop}\label{prop359}
Let $V$ be a CTM such that
\begin{equation*}
    V \models \text{``} \mathbb{P} = (P. \leq_{\mathbb{P}}) \text{ is a non-empty atomless forcing notion''.} 
\end{equation*}
Then $|\mathrm{Eval}^V(\mathfrak{T}(\mathbb{P}))| = 2^{\aleph_0}$.
\end{prop}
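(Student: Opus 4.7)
The plan is to establish both $|\mathrm{Eval}^V(\mathfrak{T}(\mathbb{P}))| \leq 2^{\aleph_0}$ and $|\mathrm{Eval}^V(\mathfrak{T}(\mathbb{P}))| \geq 2^{\aleph_0}$. The upper bound is the easier direction: every member of $\mathrm{Eval}^V(\mathfrak{T}(\mathbb{P}))$ is a CTM, and in the meta-theory every CTM is coded by a real, so $|\mathrm{Eval}^V(\mathfrak{T}(\mathbb{P}))| \leq 2^{\aleph_0}$.

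For the lower bound, I would first manufacture $2^{\aleph_0}$ pairwise distinct $\mathbb{P}$-generic filters over $V$ by the standard binary-tree argument. Since $V$ is a CTM, the dense subsets of $\mathbb{P}$ in $V$ form a countable collection, which I enumerate as $\{D_n : n < \omega\}$. Recursively build $\{p_s : s \in 2^{<\omega}\} \subset P$ as follows: pick any $p_\emptyset \in P$; given $p_s$, use density of $D_{|s|}$ to choose $q \leq_{\mathbb{P}} p_s$ with $q \in D_{|s|}$, then invoke atomlessness to find $p_{s \frown 0}, p_{s \frown 1} \leq_{\mathbb{P}} q$ with $p_{s \frown 0} \bot_{\mathbb{P}} p_{s \frown 1}$. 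For each $f \in 2^\omega$, the chain $\{p_{f \restriction n} : n < \omega\}$ generates a filter $g_f$ meeting every $D_n$, so $g_f$ is $\mathbb{P}$-generic over $V$; and if $f \neq f'$, then at the first level $n$ where they disagree, $p_{f \restriction (n+1)}$ and $p_{f' \restriction (n+1)}$ are incompatible, forcing $g_f \neq g_{f'}$.

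From these $2^{\aleph_0}$ generic filters I would extract $2^{\aleph_0}$ distinct extensions by a counting step. For any $\mathbb{P}$-generic filter $g$ over $V$, the extension $V[g]$ is itself a CTM and hence countable from the outside; consequently $\mathcal{P}(P) \cap V[g]$ is countable in the meta-theory. Since any $g'$ with $V[g'] = V[g]$ satisfies $g' \in V[g'] = V[g]$, at most countably many of the $g_f$'s can share a common extension. Partitioning $\{g_f : f \in 2^\omega\}$ into equivalence classes under the relation ``induce the same extension'' therefore yields $2^{\aleph_0} / \aleph_0 = 2^{\aleph_0}$ classes, which is the required lower bound.

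The tree construction is routine; the step that bears the conceptual weight is the counting argument, which genuinely uses the meta-theoretic fact that $V$ (and therefore each $V[g]$) is countable from the outside. This is also the step on which I would anticipate the most scrutiny, since conflating how many generic filters exist with how many generic extensions exist is a common pitfall.
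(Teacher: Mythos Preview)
Your proof is correct and follows essentially the same approach as the paper's: both build a Cantor scheme of conditions using atomlessness and the countable enumeration of dense sets in $V$ to produce $2^{\aleph_0}$ pairwise distinct $\mathbb{P}$-generic filters, then invoke the countability of each $V[g]$ to conclude that only countably many generics can share a given extension. The only cosmetic difference is the order of the two steps at each node (the paper splits first and then extends into the dense set, while you extend into the dense set and then split), which is immaterial.
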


\begin{proof}
As all members of $\mathrm{Eval}^V(\mathfrak{T}(\mathbb{P}))$ are countable, each one of them contains only countably many $\mathbb{P}$-generic filters over $V$. By Proposition \ref{prop323}, we just need to show there are $2^{\aleph_0}$ many $\mathbb{P}$-generic filters over $V$. 

The idea is to construct a Cantor scheme differentiating the generic filters in question. Outside $V$, there are countably many dense subsets of $\mathbb{P}$, so let $\{D_n : n < \omega\}$ enumerate them. Define members of the set $\{p_s : s \in 2^{< \omega}\}$ such that
\begin{enumerate}[label=(\arabic*)]
    \item\label{3591} $p_{\emptyset} \in P$,
    \item $p_s \in D_n$ if $|s| = n + 1$,
    \item $p_{s_0} \leq_{\mathbb{P}} p_{s_1}$ if $s_1 \subset s_0$, and
    \item\label{3594} $p_{s_0} \ \bot_{\mathbb{P}} \ p_{s_1}$ if $s_1 \not\subset s_0$ and $s_0 \not\subset s_1$.
\end{enumerate}
This can be done by induction on the length of $s$. Choose any condition of $\mathbb{P}$ to be $p_{\emptyset}$. Assume next that $p_s$ has been defined. Since $p_s$ is not an atom of $\mathbb{P}$, we can find $q_0$ and $q_1$ extending $p_s$ in $\mathbb{P}$ such that $q_0 \ \bot_{\mathbb{P}} \ q_1$. The density of $D_{|s|}$ guarantees there are $q'_0, q'_1 \in D_{|s|}$ extending $q_0$ and $q_1$ in $\mathbb{P}$, respectively. Set
\begin{gather*}
    p_{s^\frown \langle 0 \rangle} := q'_0 \\
    p_{s^\frown \langle 1 \rangle} := q'_1 \text{.}
\end{gather*}
It is not hard to verify \ref{3591} to \ref{3594} hold for the $p_s$s defined as such.

Given $r \in 2^{\omega}$, use $g_r$ to denote the set
\begin{equation*}
    \{q \in P : \exists n < \omega \ (p_{r \restriction n} \leq_{\mathbb{P}} q)\} \text{.}
\end{equation*}
Now $g_r$ is a $\mathbb{P}$-generic filter over $V$ whenever $r \in 2^{\omega}$. If $r_0, r_1 \in 2^{\omega}$ and $r_0 \neq r_1$, then $r_0 \restriction n \neq r_1 \restriction n$ for some $n < \omega$. By \ref{3594} we have $p_{r_0 \restriction n} \ \bot_{\mathbb{P}} \ p_{r_1 \restriction n}$, so $g_{r_0} \neq g_{r_1}$. We are done because obviously, $|2^{\omega}| = 2^{\aleph_0}$.
\end{proof}

Models of a TCI $\mathfrak{T}$ across all outer models of $V$ can be very complicated. However, there are certain models of which atomic diagrams can be easily read off $\mathbb{P}(\mathfrak{T})$.

\begin{defi}[Lau, \cite{myself}]
Given a TCI $\mathfrak{T}$ and any $\mathcal{M}$, we say $\mathcal{M}$ is a \emph{finitely determined model of} $\mathfrak{T}$ iff $\mathcal{M} \models^* \mathfrak{T}$ and for some quantifier-free sentence $\varphi$ with parameters in $\mathcal{M}$, 
\begin{align*}
    \forall W \ \forall \mathcal{M}' \ (&(W \text{ is an outer model of } V \text{ and } \mathcal{M}' \in W \text{ and } \mathcal{M}' \models^* \mathfrak{T} \text{ and } \mathcal{M}' \models \varphi) \\ 
    & \implies \mathcal{M}' = \mathcal{M}).
\end{align*}
In this case, $\mathcal{M}$ is \emph{finitely determined by} $\varphi$.
\end{defi}

Naturally, all finite models of any TCI are finitely determined. As it turns out, if a TCI is consistent, then all its finitely determined models are already in $V$

\begin{lem}[Lau, \cite{myself}]\label{findetinV}
Let $\mathfrak{T}$ be a TCI and $\mathcal{M}$ be a finitely determined model of $\mathfrak{T}$ in some outer model of $V$. Then for some atom $p$ of $\mathbb{P}(\mathfrak{T})$, $\Sigma(\mathfrak{T}, \mathcal{M}) = g_p (\mathbb{P}(\mathfrak{T}))$. In particular, $\mathcal{M} \in V$.
\end{lem}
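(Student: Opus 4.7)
Fix the quantifier-free sentence $\varphi$ with parameters in $M$ that finitely determines $\mathcal{M}$. The plan is to read off $\varphi$ a single finite condition $p \in P(\mathfrak{T})$, show that $p$ is an atom, and show that $g_p(\mathbb{P}(\mathfrak{T}))$ already exhausts every finite subset of $\Sigma(\mathfrak{T},\mathcal{M})$; the ``in particular'' clause is then instant, since $p$ and $\mathbb{P}(\mathfrak{T})$ lie in $V$, so $g_p(\mathbb{P}(\mathfrak{T})) \in V$, so $\Sigma(\mathfrak{T},\mathcal{M}) \in V$, and Proposition \ref{prop322} lets us decode $\mathcal{M}$ absolutely from $\mathfrak{T}$ and $\Sigma(\mathfrak{T},\mathcal{M})$.

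\textbf{Step 1 (encoding $\varphi$ into $\mathcal{L}_{\mathfrak{T}}$).} Put $\varphi$ into disjunctive normal form and pick a conjunction of literals $\psi$ realised by $\mathcal{M}$, so that any structure satisfying $\psi$ also satisfies $\varphi$. For each literal of $\psi$ whose atomic part involves composite terms, repeatedly introduce names for intermediate values by evaluating the terms in $\mathcal{M}$, thereby augmenting $\psi$ with literals of the canonical shapes $\ulcorner\dot F(\bar a)=b\urcorner$, $\ulcorner\dot X=a\urcorner$, $\ulcorner\dot R(\bar a)\urcorner$ (or their negations) that appear in $\mathcal{L}_{\mathfrak{T}}$, together with $\ulcorner\dot{\mathcal{U}}(a)\urcorner$ for every parameter $a$ used. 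Let $p$ be the finite set of resulting literals. By construction $p \subset \Sigma(\mathfrak{T},\mathcal{M})$, so $p \in P(\mathfrak{T})$; moreover, in any outer model of $V$, any $\mathcal{M}'$ with $\mathcal{M}' \models^*\mathfrak{T}$ and $p \subset \Sigma(\mathfrak{T},\mathcal{M}')$ must model $\psi$ and hence $\varphi$.

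\textbf{Step 2 (atom-hood and the filter identity).} Suppose $q_1, q_2 \in P(\mathfrak{T})$ both extend $p$ (that is, $q_i \supset p$), witnessed by models $\mathcal{M}_i$ of $\mathfrak{T}$ in outer models $W_i$. Because $p \subset q_i \subset \Sigma(\mathfrak{T},\mathcal{M}_i)$, Step 1 forces $\mathcal{M}_i \models \varphi$, whence finite determination yields $\mathcal{M}_i = \mathcal{M}$. Consequently $q_1 \cup q_2 \subset \Sigma(\mathfrak{T},\mathcal{M})$, so $q_1 \cup q_2 \in P(\mathfrak{T})$ is a common extension, proving $p$ is an atom. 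The same argument shows that every $q$ compatible with $p$ is contained in $\Sigma(\mathfrak{T},\mathcal{M})$, while conversely every finite subset of $\Sigma(\mathfrak{T},\mathcal{M})$ sits inside $P(\mathfrak{T})$ and is trivially compatible with $p$. This gives the identification $\Sigma(\mathfrak{T},\mathcal{M}) = \bigcup g_p(\mathbb{P}(\mathfrak{T}))$ that the statement asserts, and hence $\mathcal{M} \in V$ via Proposition \ref{prop322} as above.

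\textbf{Expected main obstacle.} The delicate point is Step 1: although $\varphi$ is quantifier-free, its atomic constituents may use nested terms and relation symbols in forms that are not literally among the canonical atoms that populate $\mathcal{L}_{\mathfrak{T}}$ (recall $\mathcal{L}_{\mathfrak{T}}$ only contains atoms like $\ulcorner\dot F(\bar a)=b\urcorner$, $\ulcorner\dot R(\bar a)\urcorner$ and $\ulcorner\dot{\mathcal{U}}(a)\urcorner$ with parameters in $U$). One must unfold each composite term by evaluating it in $\mathcal{M}$ and adjoining finitely many canonical function-value literals that force any $\mathcal{M}'$ containing $p$ in its diagram to agree with $\mathcal{M}$ on the value of that term; totality of function interpretations is what makes this unfolding well-defined and faithful. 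Once that bookkeeping is in hand, Steps 2 and 3 are essentially formal.
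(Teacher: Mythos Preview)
Your argument is correct and takes a somewhat different route from the paper's. Both proofs reduce $\varphi$ to a conjunction of literals and let $p$ be the corresponding finite condition, observing that $p$ is an atom of $\mathbb{P}(\mathfrak{T})$. The divergence is in establishing $\Sigma(\mathfrak{T},\mathcal{M}) = \bigcup g_p(\mathbb{P}(\mathfrak{T}))$: you compute $g_p$ directly, showing that every condition compatible with $p$ is contained in $\Sigma(\mathfrak{T},\mathcal{M})$ and conversely. The paper instead invokes Proposition~\ref{gpgeneric} to see that $g_p(\mathbb{P}(\mathfrak{T}))$ is $\mathbb{P}(\mathfrak{T})$-generic over $V$, then appeals to Theorem~\ref{thm351} to obtain a model $\mathcal{N}$ of $\mathfrak{T}$ with $\Sigma(\mathfrak{T},\mathcal{N}) = \bigcup g_p$, and finally uses $p \subset \bigcup g_p$ plus finite determination to conclude $\mathcal{N} = \mathcal{M}$. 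Your direct approach buys something real: Theorem~\ref{thm351} carries a $\Pi_2$ hypothesis that the lemma's statement does not, so your argument matches the stated generality while the paper's, read literally, does not. The cost is the term-unfolding bookkeeping you flag as the main obstacle; the paper absorbs the same issue into a ``without loss of generality'' clause, and your handling of it is adequate.
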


\begin{proof}
Let $\mathcal{M}$ be finitely determined by $\varphi$. Without loss of generality, we can assume $\varphi$ is the conjunction of a set of literals $\{l_i : i < n\}$ for some $n < \omega$. This means $$p := \{\ulcorner E(l_i) \urcorner : i < n\}$$ is an atom of $\mathbb{P}(\mathfrak{T})$. Proposition \ref{gpgeneric} tells us that $g_p (\mathbb{P}(\mathfrak{T}))$ is $\mathbb{P}(\mathfrak{T})$-generic over $V$, so necessarily $\Sigma(\mathfrak{T}, \mathcal{M}) = g_p (\mathbb{P}(\mathfrak{T}))$ by Theorem \ref{thm351}. Then according to Proposition \ref{prop322}, $\mathcal{M} \in V$ because $g_p (\mathbb{P}(\mathfrak{T})) \in V$.
\end{proof}

It is possible to have an analogue of Lemma \ref{findetinV} for models that are ``close to being finitely determined''.

\begin{defi}[Lau, \cite{myself}]\label{CB1}
Let $\mathfrak{T}$ be a TCI. Inductively define $\Gamma_{\mathfrak{T}}^{(\alpha)}$, $P(\mathfrak{T})^{(\alpha)}$ and $\mathbb{P}(\mathfrak{T})^{(\alpha)}$ for all ordinals $\alpha \leq |[\mathcal{L}_{\mathfrak{T}}]^{< \omega}|^+$ as follows:
\begin{align*}
    \Gamma_{\mathfrak{T}}^{(0)} := \ & \Gamma_{\mathfrak{T}} \text{,} \\
    P(\mathfrak{T})^{(0)} := \ & P(\mathfrak{T}) \text{,} \\
    \Gamma_{\mathfrak{T}}^{(\alpha)} := \ & \Gamma_{\mathfrak{T}}^{(\alpha - 1)} \cup \{\ulcorner \bigvee_{x \in p} (\neg E(x)) \urcorner : p \text{ is an atom of } \mathbb{P}(\mathfrak{T})^{(\alpha - 1)}\} \\
    & \text{if } \alpha \text{ is a successor ordinal,} \\
    \Gamma_{\mathfrak{T}}^{(\alpha)} := \ & \bigcup_{\beta < \alpha} \Gamma_{\mathfrak{T}}^{(\beta)} \\
    & \text{if } \alpha \text{ is a limit ordinal,} \\
    P(\mathfrak{T})^{(\alpha)} := \ & \{p \in [\mathcal{L}_{\mathfrak{T}}]^{< \omega} : \ \Vdash_{Col(\omega, |trcl(\mathfrak{A}_{\mathfrak{T}})|)} \exists \Sigma \ (``\Sigma \ \Gamma_{\mathfrak{T}}^{(\alpha)} (\mathcal{L}_{\mathfrak{T}}, \mathfrak{A}_{\mathfrak{T}})\text{-certifies } p")\} \text{,} \\
    \mathbb{P}(\mathfrak{T})^{(\alpha)} := \ & (P(\mathfrak{T})^{(\alpha)}, \leq_{\mathbb{P}(\mathfrak{T})}) \text{.}
\end{align*}
\end{defi}

By a simple cardinality argument, there must exist some $\alpha < |[\mathcal{L}_{\mathfrak{T}}]^{< \omega}|^+$ for which $\Gamma_{\mathfrak{T}}^{(\alpha)} = \Gamma_{\mathfrak{T}}^{(\alpha + 1)}$, whence $\mathbb{P}(\mathfrak{T})^{(\alpha)} = \mathbb{P}(\mathfrak{T})^{(\alpha + 1)}$.

\begin{defi}[Lau, \cite{myself}]\label{CB2}
Let $\Gamma_{\mathfrak{T}}^{\top}$ denote the unique $\Gamma$ such that $\Gamma = \Gamma_{\mathfrak{T}}^{(\alpha)} = \Gamma_{\mathfrak{T}}^{(\alpha + 1)}$ for some $\alpha < |[\mathcal{L}_{\mathfrak{T}}]^{< \omega}|^+$. Similarly, $\mathbb{P}(\mathfrak{T})^{\top} = (P(\mathfrak{T})^{\top}, \leq_{\mathbb{P}(\mathfrak{T})^{\top}})$ shall denote the unique $\mathbb{P}$ such that $\mathbb{P} = \mathbb{P}(\mathfrak{T})^{(\alpha)} = \mathbb{P}(\mathfrak{T})^{(\alpha + 1)}$ for some $\alpha < |[\mathcal{L}_{\mathfrak{T}}]^{< \omega}|^+$.
\end{defi}

It is not hard to see that $P(\mathfrak{T})^{\top}$ is an atomless upward closed subset of $\mathbb{P}(\mathfrak{T})$ and $\Gamma_{\mathfrak{T}} \subset \Gamma_{\mathfrak{T}}^{\top}$.

\begin{rem}\label{CBrem}
In constructing the $\mathbb{P}(\mathfrak{T})^{(\alpha)}$'s, we are inductively removing atoms of $\mathbb{P}(\mathfrak{T})$. These atoms correspond to isolated points in a Stone-type space generated by models of a TCI. By looking at Definition \ref{CB1} in this way, we can draw obvious parallels between $\mathbb{P}(\mathfrak{T})^{(\alpha)}$ and the $\alpha$-th-order Cantor-Bendixson derivative of a set. Such parallels culminate in $\mathbb{P}(\mathfrak{T})^{\top}$ being analogous to the ``perfect core'' of $\mathbb{P}(\mathfrak{T})$.
\end{rem}

\begin{defi}[Lau, \cite{myself}]
Given a TCI $\mathfrak{T}$ and any $\mathcal{M}$, we say $\mathcal{M}$ is an \emph{almost finitely determined model of} $\mathfrak{T}$ iff $\mathcal{M} \models^* \mathfrak{T}$ and for some $\alpha < |[\mathcal{L}_{\mathfrak{T}}]^{< \omega}|^+$ and an atom $p$ of $\mathbb{P}(\mathfrak{T})^{(\alpha)}$, $$p \subset \Sigma(\mathfrak{T}, \mathcal{M}).$$
\end{defi}

We have as our next lemma, the promised analogue of Lemma \ref{findetinV}.

\begin{lem}[Lau, \cite{myself}]\label{lem365}
Let $\mathfrak{T}$ be a TCI and $\mathcal{M}$ be an almost finitely determined model of $\mathfrak{T}$ in some outer model of $V$. Then for some $\alpha < |[\mathcal{L}_{\mathfrak{T}}]^{< \omega}|^+$ and some atom $p$ of $\mathbb{P}(\mathfrak{T})^{(\alpha)}$, $\Sigma(\mathfrak{T}, \mathcal{M}) = g_p (\mathbb{P}(\mathfrak{T})^{(\alpha)})$. In particular, $\mathcal{M} \in V$.
\end{lem}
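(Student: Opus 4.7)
The plan is to generalise the proof of Lemma \ref{findetinV} from the ``zeroth'' derivative $\mathbb{P}(\mathfrak{T}) = \mathbb{P}(\mathfrak{T})^{(0)}$ to the higher Cantor--Bendixson-type derivatives $\mathbb{P}(\mathfrak{T})^{(\alpha)}$. I first let $\alpha^* < |[\mathcal{L}_{\mathfrak{T}}]^{< \omega}|^+$ be the least ordinal for which some atom $p^*$ of $\mathbb{P}(\mathfrak{T})^{(\alpha^*)}$ satisfies $p^* \subset \Sigma(\mathfrak{T}, \mathcal{M})$; such an $\alpha^*$ exists by the almost-finite-determinacy hypothesis. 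I then claim the lemma's conclusion holds with $\alpha^*$ and $p^*$.

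By the minimality of $\alpha^*$, no atom of any $\mathbb{P}(\mathfrak{T})^{(\beta)}$ with $\beta < \alpha^*$ is contained in $\Sigma(\mathfrak{T}, \mathcal{M})$, which is exactly to say that $\Sigma(\mathfrak{T}, \mathcal{M})$ satisfies each extra axiom $\ulcorner \bigvee_{x \in q} \neg E(x) \urcorner$ added into $\Gamma_{\mathfrak{T}}^{(\alpha^*)}$ beyond $\Gamma_{\mathfrak{T}}$. Combined with Lemma \ref{lem350}, this tells me that $\Sigma(\mathfrak{T}, \mathcal{M})$ $\Gamma_{\mathfrak{T}}^{(\alpha^*)}(\mathcal{L}_{\mathfrak{T}}, \mathfrak{A}_{\mathfrak{T}})$-certifies $\emptyset$; and the new axioms being quantifier-free disjunctions also keeps $\Gamma_{\mathfrak{T}}^{(\alpha^*)}$ within $\Pi_2$. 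Proposition \ref{gpgeneric} then gives that $g := g_{p^*}(\mathbb{P}(\mathfrak{T})^{(\alpha^*)})$ is a $\mathbb{P}(\mathfrak{T})^{(\alpha^*)}$-generic filter over $V$ that plainly lies in $V$. Applying Lemma \ref{lem349} with $\Gamma_{\mathfrak{T}}^{(\alpha^*)}$, $\mathcal{L}_{\mathfrak{T}}$, $\mathfrak{A}_{\mathfrak{T}}$, $P(\mathfrak{T})^{(\alpha^*)}$, and $\mathbb{P}(\mathfrak{T})^{(\alpha^*)}$ --- noting that $P(\mathfrak{T})^{(\alpha^*)}$ contains $p^*$ and that the forcing-relation-based iterative Definition \ref{CB1} makes $\mathbb{P}(\mathfrak{T})^{(\alpha^*)}$ definable inside $\mathfrak{A}_{\mathfrak{T}}$ by transfinite recursion --- I obtain that $\bigcup g$ $\Gamma_{\mathfrak{T}}^{(\alpha^*)}$-certifies $\emptyset$. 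By Lemma \ref{lem350} there is then a model $\mathcal{M}' \in V$ of $\mathfrak{T}$ with $\Sigma(\mathfrak{T}, \mathcal{M}') = \bigcup g \cap \mathcal{L}_{\mathfrak{T}}$.

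To close the argument I show $\mathcal{M}' = \mathcal{M}$ by mirroring the atom-uniqueness step in the proof of Lemma \ref{findetinV}: were the diagrams of $\mathcal{M}$ and $\mathcal{M}'$ to disagree on some atomic sentence $\varphi$, both $p^* \cup \{\varphi\}$ and $p^* \cup \{\neg \varphi\}$ would be $\Gamma_{\mathfrak{T}}^{(\alpha^*)}$-certified (by $\Sigma(\mathfrak{T}, \mathcal{M})$ and $\Sigma(\mathfrak{T}, \mathcal{M}')$ respectively), placing both in $P(\mathfrak{T})^{(\alpha^*)}$; yet no $\Sigma$ can contain the complementary pair $\{\varphi, \neg \varphi\}$, so their union is uncertifiable and they are $\supset$-incompatible in $\mathbb{P}(\mathfrak{T})^{(\alpha^*)}$, contradicting $p^*$'s atomicity. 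The main technical obstacle I anticipate sits in this final step: transferring ``certified by some $\Sigma$ in an outer model of $V$'' into actual membership in $P(\mathfrak{T})^{(\alpha^*)}$ (defined via a $Col(\omega, |trcl(\mathfrak{A}_{\mathfrak{T}})|)$-forcing statement) requires an absoluteness argument entirely analogous to Lemma \ref{lem329}, and getting this transfer clean in the iterated setting is the most delicate piece to pin down.
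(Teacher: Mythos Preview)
Your proposal is correct and is essentially the paper's own argument, just packaged via the least witness $\alpha^*$ rather than by transfinite induction on $\alpha$; the two presentations are logically interchangeable, and your explicit invocation of Lemma~\ref{lem349} and the absoluteness transfer simply unpacks what the paper compresses into the phrase ``by Lemma~\ref{gpgeneric} and the definition of $\mathbb{P}(\mathfrak{T})^{(\alpha)}$''.
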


\begin{proof}
Choose any model $\mathcal{M}$ of $\mathfrak{T}$ in an outer model of $V$. It suffices to prove by induction on $\alpha \leq |[\mathcal{L}_{\mathfrak{T}}]^{< \omega}|^+$ that
\begin{align*}
    \forall q \ \exists \beta \leq \alpha \ \exists p \ ( & (q \text{ is an atom of } \mathbb{P}(\mathfrak{T})^{(\alpha)} \text{ and } q \subset \Sigma(\mathfrak{T}, \mathcal{M})) \\
    & \implies (p \text{ is an atom of } \mathbb{P}(\mathfrak{T})^{(\beta)} \text{ and } \Sigma(\mathfrak{T}, \mathcal{M}) = g_p (\mathbb{P}(\mathfrak{T})^{(\beta)}))) \text{.}
\end{align*}
The base case where $\alpha = 0$ is just Lemma \ref{findetinV}. For the inductive case, assume $0 < \alpha \leq |[\mathcal{L}_{\mathfrak{T}}]^{< \omega}|^+$. and let $q$ be an atom of $\mathbb{P}(\mathfrak{T})^{(\alpha)}$ with $q \subset \Sigma(\mathfrak{T}, \mathcal{M})$. Then by Lemma \ref{gpgeneric} and the definition of $\mathbb{P}(\mathfrak{T})^{(\alpha)}$, either $\Sigma(\mathfrak{T}, \mathcal{M}) = g_q (\mathbb{P}(\mathfrak{T})^{(\alpha)})$ or there is $\beta' < \alpha$ and an atom $q'$ of $\mathbb{P}(\mathfrak{T})^{(\beta')}$ such that $q' \subset \Sigma(\mathfrak{T}, \mathcal{M})$. In the latter case, the inductive hypothesis gives us $\beta \leq \beta'$ and an atom $p$ of $\mathbb{P}(\mathfrak{T})^{(\beta)}$ for which $\Sigma(\mathfrak{T}, \mathcal{M}) = g_p (\mathbb{P}(\mathfrak{T})^{(\beta)})$. Either way we are done.
\end{proof}

The way $\mathbb{P}(\mathfrak{T})$ and $\mathbb{P}(\mathfrak{T})^{\top}$ are defined from a TCI $\mathfrak{T}$ allows us to establish a nice dichotomy on the $(\mathbb{P}(\mathfrak{T}), V)$-generic models of $\mathfrak{T}$ when $\mathfrak{T}$ is $\Pi_2$.

\begin{lem}[Lau, \cite{myself}]\label{lem371}
Let $\mathfrak{T}$ be a $\Pi_2$ TCI and $\mathcal{M}$ be a $(\mathbb{P}(\mathfrak{T}), V)$-generic model of $\mathfrak{T}$. Then one of the following must hold:
\begin{enumerate}[label=(\arabic*)]
    \item $\mathcal{M}$ is almost finitely determined.
    \item $\mathcal{M}$ is a $(\mathbb{P}(\mathfrak{T})^{\top}, V)$-generic model of $\mathfrak{T}$.
\end{enumerate}
\end{lem}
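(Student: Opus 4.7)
The plan is to fix a $\mathbb{P}(\mathfrak{T})$-generic filter $g$ over $V$ witnessing $\mathcal{M}$, so that $\bigcup g = \Sigma(\mathfrak{T}, \mathcal{M})$ (the intersection with $\mathcal{L}_{\mathfrak{T}}$ is vacuous because $P(\mathfrak{T}) \subseteq [\mathcal{L}_{\mathfrak{T}}]^{<\omega}$), and then split on the natural dichotomy: either some atom $p$ of some $\mathbb{P}(\mathfrak{T})^{(\alpha)}$ satisfies $p \subseteq \bigcup g$, or no such $p$ does. In the former case outcome (1) is immediate from the definition of an almost finitely determined model, so the real work is in the latter case, where the goal is to show that this same $g$ witnesses $\mathcal{M}$ as a $(\mathbb{P}(\mathfrak{T})^{\top}, V)$-generic model of $\mathfrak{T}$.

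The crux is an intermediate claim: under the negative side of the dichotomy, $g \subseteq P(\mathfrak{T})^{\top}$. I would prove this by transfinite induction on $\alpha$ up to the stabilisation ordinal $\alpha^*$ from Definitions \ref{CB1} and \ref{CB2}, establishing that $\bigcup g$ $\Gamma_{\mathfrak{T}}^{(\alpha)}(\mathcal{L}_{\mathfrak{T}}, \mathfrak{A}_{\mathfrak{T}})$-certifies $\emptyset$. The base case $\alpha = 0$ is Lemma \ref{lem350} applied to $\mathcal{M}$; the limit case is trivial since $\Gamma^{(\alpha)}$ is a union. The successor step $\alpha \to \alpha + 1$ is where the case hypothesis does its work: the sentences freshly added in $\Gamma^{(\alpha+1)}$ are precisely $\bigvee_{x \in p} \neg E(x)$ for atoms $p$ of $\mathbb{P}(\mathfrak{T})^{(\alpha)}$, and each such sentence is satisfied under the interpretation $E \mapsto \bigcup g$ iff $p \not\subseteq \bigcup g$, which is the case hypothesis. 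With this established, any $q \in g$ satisfies $q \subseteq \bigcup g$, and $\bigcup g$ $\Gamma_{\mathfrak{T}}^{\top}$-certifies $q$ inside the outer model $V[g]$; the forcing-absolute characterisation of $P(\mathfrak{T})^{(\alpha)}$ built into its definition (mirroring Lemma \ref{lem329}) then puts $q$ in $P(\mathfrak{T})^{\top}$.

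Once $g \subseteq P(\mathfrak{T})^{\top}$ is secured, $\mathbb{P}(\mathfrak{T})^{\top}$-genericity of $g$ over $V$ follows by a routine dense-set transfer: given $D \in V$ dense in $\mathbb{P}(\mathfrak{T})^{\top}$, let $D' := D \cup \{q \in P(\mathfrak{T}) : \text{no } r \in P(\mathfrak{T}) \text{ with } r \leq_{\mathbb{P}(\mathfrak{T})} q \text{ lies in } P(\mathfrak{T})^{\top}\}$, which is definable in $V$ and dense in $\mathbb{P}(\mathfrak{T})$; the original genericity of $g$ gives $g \cap D' \neq \emptyset$, and $g \subseteq P(\mathfrak{T})^{\top}$ rules out the second disjunct (take $r = q$), leaving $g \cap D \neq \emptyset$. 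Since $\bigcup g = \Sigma(\mathfrak{T}, \mathcal{M})$ is unchanged, $g$ witnesses outcome (2). The main obstacle I foresee is the successor stage of the induction: one must align the new Cantor--Bendixson axioms at each stage with the negated-containment condition on $\bigcup g$, while staying inside the $\Gamma$-certification framework so that the forcing-absoluteness behind $P(\mathfrak{T})^{(\alpha)}$ can be invoked at the end. The subsequent passage from $g \subseteq P(\mathfrak{T})^{\top}$ to $\mathbb{P}(\mathfrak{T})^{\top}$-genericity is, by comparison, essentially bookkeeping.
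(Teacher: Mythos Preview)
Your proposal is correct and follows essentially the same route as the paper: split on whether some atom of some $\mathbb{P}(\mathfrak{T})^{(\alpha)}$ lies in $g$, observe that in the negative case $\bigcup g$ $\Gamma_{\mathfrak{T}}^{\top}(\mathcal{L}_{\mathfrak{T}}, \mathfrak{A}_{\mathfrak{T}})$-certifies $\emptyset$ (you spell out the induction where the paper simply says ``clearly''), and then transfer genericity to $\mathbb{P}(\mathfrak{T})^{\top}$ via an auxiliary set in $V$. The only cosmetic difference is in that last step: the paper augments a predense $E \subseteq \mathbb{P}(\mathfrak{T})^{\top}$ with the set $\mathcal{A}$ of all such atoms and uses that any condition incompatible with all of $\mathcal{A}$ already lies in $P(\mathfrak{T})^{\top}$, whereas your $D'$ sidesteps that observation.
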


\begin{proof}
Let $g$ be a $\mathbb{P}(\mathfrak{T})$-generic filter over $V$ and assume $\mathcal{A} \cap g = \emptyset$, where $$\mathcal{A} := \{p : \exists \alpha \ (\alpha < |[\mathcal{L}_{\mathfrak{T}}]^{< \omega}|^+ \text{ and } p \text{ is an atom of } \mathbb{P}(\mathfrak{T})^{(\alpha)})\}.$$ This latter assumption is equivalent to saying that the unique model $\mathcal{M}$ of $\mathfrak{T}$ for which $\bigcup g = \Sigma(\mathfrak{T}, \mathcal{M})$ is not almost finitely determined. By Theorem \ref{thm351}, it suffices to show that $g$ is a $\mathbb{P}(\mathfrak{T})^{\top}$-generic filter over $V$. Clearly, $\bigcup g$ $\Gamma_{\mathfrak{T}}^{\top} (\mathcal{L}_{\mathfrak{T}}, \mathfrak{A}_{\mathfrak{T}})$-certifies $p$, so $g \subset \mathbb{P}(\mathfrak{T})^{\top}$. That $\mathbb{P}(\mathfrak{T})^{\top}$ is a suborder of $\mathbb{P}(\mathfrak{T})$ means $g$ is a filter on $\mathbb{P}(\mathfrak{T})^{\top}$.

To see $g$ is $\mathbb{P}(\mathfrak{T})^{\top}$-generic over $V$, let $E$ be predense in $\mathbb{P}(\mathfrak{T})^{\top}$. Note that if $p \in \mathbb{P}(\mathfrak{T})$ is incompatible in $\mathbb{P}(\mathfrak{T})$ with every member of $\mathcal{A}$, then $p \in \mathbb{P}(\mathfrak{T})^{\top}$. As such, $E \cup \mathcal{A}$ must be predense in $\mathbb{P}(\mathfrak{T})$. But this implies $E \cap g \neq \emptyset$ because $g$ is $\mathbb{P}(\mathfrak{T})$-generic and $\mathcal{A} \cap g = \emptyset$.
\end{proof}

The following is a stronger version of Theorem \ref{thm351}.

\begin{thm}[Lau, \cite{myself}]\label{thm367}
Let $\mathfrak{T}$ be a $\Pi_2$ TCI. Then one of the following must hold.
\begin{enumerate}[label=(\arabic*)]
    \item All models of $\mathfrak{T}$ are almost finitely determined.
    \item $\mathbb{P}(\mathfrak{T})^{\top}$ is non-empty and every $\mathbb{P}(\mathfrak{T})^{\top}$-generic filter over $\mathfrak{A}_{\mathfrak{T}}$ witnesses $\mathcal{M}$ is a $(\mathbb{P}(\mathfrak{T})^{\top}, \mathfrak{A}_{\mathfrak{T}})$-generic model of $\mathfrak{T}$ for some $\mathcal{M}$.
\end{enumerate}
\end{thm}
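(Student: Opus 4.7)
The plan is to prove the contrapositive: if clause (1) fails then clause (2) holds. Fix a model $\mathcal{M}$ of $\mathfrak{T}$ living in some outer model $W$ of $V$ that is \emph{not} almost finitely determined, and write $\Sigma := \Sigma(\mathfrak{T}, \mathcal{M})$. Unpacking the definition, for every $\alpha < |[\mathcal{L}_{\mathfrak{T}}]^{< \omega}|^+$ and every atom $q$ of $\mathbb{P}(\mathfrak{T})^{(\alpha)}$, $q \not\subset \Sigma$; equivalently, when $E$ is interpreted as $\Sigma$, the disjunction $\ulcorner \bigvee_{x \in q}(\neg E(x)) \urcorner$ holds. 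This is the fuel driving both halves of the argument.

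The first step is to show $\mathbb{P}(\mathfrak{T})^{\top} \neq \emptyset$. I would prove by induction on $\alpha < |[\mathcal{L}_{\mathfrak{T}}]^{< \omega}|^+$ that in $W$, $\Sigma$ $\Gamma_{\mathfrak{T}}^{(\alpha)}(\mathcal{L}_{\mathfrak{T}}, \mathfrak{A}_{\mathfrak{T}})$-certifies $\emptyset$, hence also certifies every finite $p \subset \Sigma$. The base case $\alpha = 0$ is Lemma \ref{lem350}\ref{3552} applied to $\mathcal{M}$. Successor stages use that the new axioms in $\Gamma_{\mathfrak{T}}^{(\alpha+1)} \setminus \Gamma_{\mathfrak{T}}^{(\alpha)}$ are exactly the quantifier-free disjunctions $\ulcorner \bigvee_{x \in q}(\neg E(x)) \urcorner$ over atoms $q$ of $\mathbb{P}(\mathfrak{T})^{(\alpha)}$, each satisfied by $\Sigma$ by the choice of $\mathcal{M}$. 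Limit stages are immediate unions. To turn the in-$W$ fact ``$\Sigma$ certifies $p$ with respect to $\Gamma_{\mathfrak{T}}^{(\alpha)}$'' into membership $p \in P(\mathfrak{T})^{(\alpha)}$, a statement about what $Col(\omega, |trcl(\mathfrak{A}_{\mathfrak{T}})|)$ forces, I would invoke the absoluteness argument of Lemma \ref{lem329}: existence of such a certifier is consistent (witnessed by $\Sigma \in W$), and a sufficiently large collapse generically produces one. Taking $\alpha$ to be the ordinal at which the construction of Definitions \ref{CB1}--\ref{CB2} stabilises yields $P(\mathfrak{T})^{\top} \neq \emptyset$.

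The second step is routine given the first: apply Lemma \ref{lem349} with $\Gamma_{\mathfrak{T}}^{\top}$, $\mathcal{L}_{\mathfrak{T}}$, $\mathfrak{A}_{\mathfrak{T}}$, $\lambda := |trcl(\mathfrak{A}_{\mathfrak{T}})|$, $P(\mathfrak{T})^{\top}$, and $\mathbb{P}(\mathfrak{T})^{\top}$ in the respective roles. The $\Pi_2$ hypothesis is met because $\Gamma_{\mathfrak{T}}$ is $\Pi_2$ by Lemma \ref{lem350}\ref{3551} and every sentence added in Definition \ref{CB1} is quantifier-free; the cardinality hypothesis is immediate; non-emptiness of $P(\mathfrak{T})^{\top}$ has just been established; and $\mathbb{P}(\mathfrak{T})^{\top}$ is definable in $\mathfrak{A}_{\mathfrak{T}}$ because the transfinite recursion of Definitions \ref{CB1} and \ref{CB2} is uniformly definable from the parameters $\mathfrak{T}$, $\mathcal{L}_{\mathfrak{T}}$, $\mathfrak{A}_{\mathfrak{T}}$, all of which sit inside $\mathfrak{A}_{\mathfrak{T}}$. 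Lemma \ref{lem349} then furnishes, for any $\mathbb{P}(\mathfrak{T})^{\top}$-generic filter $g$ over $\mathfrak{A}_{\mathfrak{T}}$, a $\Gamma_{\mathfrak{T}}^{\top}(\mathcal{L}_{\mathfrak{T}}, \mathfrak{A}_{\mathfrak{T}})$-certification of $\emptyset$ by $\bigcup g$. Since $\Gamma_{\mathfrak{T}} \subset \Gamma_{\mathfrak{T}}^{\top}$, Lemma \ref{lem350}\ref{3552} and Proposition \ref{prop322}\ref{3245} together produce a unique model $\mathcal{M}$ of $\mathfrak{T}$ with $\Sigma(\mathfrak{T}, \mathcal{M}) = \bigcup g$, so $g$ witnesses that $\mathcal{M}$ is a $(\mathbb{P}(\mathfrak{T})^{\top}, \mathfrak{A}_{\mathfrak{T}})$-generic model of $\mathfrak{T}$.

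I expect the main obstacle to lie in the inductive step of the non-emptiness argument, specifically in uniformly upgrading the in-$W$ certification by $\Sigma$ to the $Col$-forced certification demanded by the definition of $P(\mathfrak{T})^{(\alpha)}$. This is a direct reprise of the Lemma \ref{lem329} absoluteness argument, but must be invoked at each stage of the hierarchy with $\Gamma_{\mathfrak{T}}^{(\alpha)}$ in place of $\mathfrak{T}$. Everything else is a mechanical application of Lemmas \ref{lem349} and \ref{lem350}.
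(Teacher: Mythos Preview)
Your proposal is correct and follows essentially the same route as the paper: assume (1) fails, use a non-almost-finitely-determined model $\mathcal{M}$ to witness that $\Sigma(\mathfrak{T},\mathcal{M})$ $\Gamma_{\mathfrak{T}}^{\top}(\mathcal{L}_{\mathfrak{T}},\mathfrak{A}_{\mathfrak{T}})$-certifies $\emptyset$ (hence $P(\mathfrak{T})^{\top}\neq\emptyset$), and then invoke Lemma~\ref{lem349} with $\Gamma_{\mathfrak{T}}^{\top}$ in place of $\Gamma$, finishing via Lemma~\ref{lem350} and $\Gamma_{\mathfrak{T}}\subset\Gamma_{\mathfrak{T}}^{\top}$. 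The paper compresses your inductive certification argument and the accompanying absoluteness check into a single sentence, and does not spell out the definability of $\mathbb{P}(\mathfrak{T})^{\top}$ in $\mathfrak{A}_{\mathfrak{T}}$; your version simply makes these steps explicit.
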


\begin{proof}
Assume not all models of $\mathfrak{T}$ are almost finitely determined, and let $\mathcal{M}$ be a model of $\mathfrak{T}$ not almost finitely determined in some outer model of $V$. Then $\Sigma(\mathfrak{T}, \mathcal{M})$ $\Gamma_{\mathfrak{T}}^{\top} (\mathcal{L}_{\mathfrak{T}}, \mathfrak{A}_{\mathfrak{T}})$-certifies $\emptyset$, so $\mathbb{P}(\mathfrak{T})^{\top}$ is non-empty.

Let $g$ be a $\mathbb{P}(\mathfrak{T})^{\top}$-generic filter over $V$. Check that the hypotheses of Lemma \ref{lem349} are satisfied when we have 
\begin{itemize}
    \item $V[g]$ in place of $W$,
    \item $|\mathfrak{A}_{\mathfrak{T}}|$ in place of $\lambda$,
    \item $\mathfrak{A}_{\mathfrak{T}}$ in place of $\mathfrak{A}$,
    \item $\mathcal{L}_{\mathfrak{T}}$ in place of $\mathcal{L}$,
    \item $\Gamma_{\mathfrak{T}}^{\top}$ in place of $\Gamma$,
    \item $P(\mathfrak{T})^{\top}$ in place of $P$, and
    \item $\mathbb{P}(\mathfrak{T})^{\top}$ in place of $\mathbb{P}$,
\end{itemize}
A direct application of said lemma, coupled with Lemma \ref{lem350} and the knowledge that $\Gamma_{\mathfrak{T}} \subset \Gamma_{\mathfrak{T}}^{\top}$, completes the proof. 
\end{proof}

The strengthening we were aiming for can now be proven.

\begin{thm}\label{thm368}
Fix $\mathfrak{T}^* \in \mathsf{Fg}$. Then there is $F_{\mathfrak{T}^*}$ witnessing $\mathsf{\Pi^M_2} \leq^M \mathsf{Fg}$ such that
\begin{enumerate}[label=(\arabic*)]
    \item $F_{\mathfrak{T}^*}(\mathfrak{T}) = \mathfrak{T}^*$ if $\mathfrak{T}$ is inconsistent,
    \item $F_{\mathfrak{T}^*}(\mathfrak{T}) = \mathfrak{T}((\emptyset, \emptyset))$ if $\mathfrak{T}$ is consistent and all models of $\mathfrak{T}$ are almost finitely determined, and
    \item $F_{\mathfrak{T}^*}(\mathfrak{T}) = \mathfrak{T}(\mathbb{P})$ for some non-empty atomless forcing notion $\mathbb{P}$ if $\mathfrak{T}$ is consistent and not all models of $\mathfrak{T}$ are almost finitely determined.
\end{enumerate}
\end{thm}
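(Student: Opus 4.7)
My plan is to define $F_{\mathfrak{T}^*}$ by cases on three sub-domains of $\mathsf{\Pi^M_2}$: inconsistent TCIs, consistent TCIs with $\mathbb{P}(\mathfrak{T})^{\top} = \emptyset$, and consistent TCIs with $\mathbb{P}(\mathfrak{T})^{\top} \neq \emptyset$. On these I would set $F_{\mathfrak{T}^*}(\mathfrak{T})$ to be $\mathfrak{T}^*$, $\mathfrak{T}((\emptyset, \emptyset))$, and $\mathfrak{T}(\mathbb{P}(\mathfrak{T})^{\top})$ respectively. Definability of $F_{\mathfrak{T}^*}$ in $V$ then follows because consistency is $V$-definable via Lemma \ref{lem329}, $\mathbb{P}(\mathfrak{T})^{\top}$ is a $V$-definable function of $\mathfrak{T}$ through Definitions \ref{CB1} and \ref{CB2}, and $\mathbb{P} \mapsto \mathfrak{T}(\mathbb{P})$ was already set up to be $V$-definable in Corollary \ref{cor324}.

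The next step is to verify that my partition aligns with the one in the theorem statement, via the equivalence ``$\mathbb{P}(\mathfrak{T})^{\top} = \emptyset$ iff every model of $\mathfrak{T}$ in every outer model of $V$ is almost finitely determined''. The forward direction is immediate from Theorem \ref{thm367}, whose case (2) cannot hold when $\mathbb{P}(\mathfrak{T})^{\top}$ is empty, forcing case (1). For the converse, I would first argue that $\mathbb{P}(\mathfrak{T})^{\top}$ is atomless whenever non-empty: any atom $p$ of $\mathbb{P}(\mathfrak{T})^{\top} = \mathbb{P}(\mathfrak{T})^{(\alpha)}$ would be explicitly excluded from $P(\mathfrak{T})^{(\alpha+1)}$ by the sentence $\ulcorner \bigvee_{x \in p} \neg E(x) \urcorner$ added to $\Gamma_{\mathfrak{T}}^{(\alpha+1)}$, violating the fixed-point defining property. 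Non-emptiness and atomlessness then produce a $\mathbb{P}(\mathfrak{T})^{\top}$-generic filter $g \notin V$, whose associated model $\mathcal{M}$ of $\mathfrak{T}$ obtained through Theorem \ref{thm367} cannot lie in $V$ either, and therefore fails to be almost finitely determined by Lemma \ref{lem365}.

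The remaining task is to verify the required $\mathrm{Eval}^V$-inclusions in each consistent case. In the empty case, Lemma \ref{lem365} places every model of $\mathfrak{T}$ inside $V$, giving $\mathrm{Eval}^V(\mathfrak{T}) = \{V\}$; the TCI $\mathfrak{T}((\emptyset, \emptyset))$ admits only a trivial model in $V$ and also yields $\mathrm{Eval}^V = \{V\}$, matching perfectly. In the non-empty case, I would apply Lemma \ref{lem344} wholesale, with $\mathbb{P}(\mathfrak{T})^{\top}$ in the role of $\mathbb{P}$: conditions are finite subsets of $\mathcal{L}_{\mathfrak{T}}$ ordered by reverse inclusion, satisfying clauses (1) and (2) of the lemma, while clause (3) is precisely the content of Theorem \ref{thm367}(2). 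The main potential obstacle is the atomlessness verification for $\mathbb{P}(\mathfrak{T})^{\top}$, since the whole third case hinges on it together with the Lemma \ref{lem344}-style recovery of the generic filter $g = [\bigcup g]^{<\omega} \cap P(\mathfrak{T})^{\top}$ inside $V[\mathcal{M}]$.
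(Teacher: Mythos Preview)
Your proposal is correct and follows essentially the same approach as the paper: the paper defines $F_{\mathfrak{T}^*}(\mathfrak{T})$ to be $\mathfrak{T}^*$ for inconsistent $\mathfrak{T}$ and $\mathfrak{T}(\mathbb{P}(\mathfrak{T})^{\top})$ otherwise, citing Lemma~\ref{lem365}, Theorem~\ref{thm367}, and the observation that $\mathbb{P}(\mathfrak{T})^{\top} = (\emptyset,\emptyset)$ when all models are almost finitely determined. Your three-case split defines the very same function (since your second case is just the instance $\mathbb{P}(\mathfrak{T})^{\top} = (\emptyset,\emptyset)$ of your third), and you supply several details the paper leaves implicit---the definability check, both directions of the equivalence between ``$\mathbb{P}(\mathfrak{T})^{\top}=\emptyset$'' and ``all models almost finitely determined'', the atomlessness argument (which the paper states without proof just after Definition~\ref{CB2}), and the explicit invocation of Lemma~\ref{lem344} for the $\mathrm{Eval}^V$-inclusion.
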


\begin{proof}
Define $F_{\mathfrak{T}^*}$ point-wise as follows:
\begin{align*}
    F_{\mathfrak{T}^*}(\mathfrak{T}) :=  
    \begin{cases}
        \mathfrak{T}^* & \text{if } \mathfrak{T} \text{ is inconsistent} \\
        \mathfrak{T}(\mathbb{P}(\mathfrak{T})^{\top}) & \text{otherwise,}
    \end{cases}
\end{align*}
noting Lemma \ref{lem365}, Theorem \ref{thm367} and the fact that $\mathbb{P}(\mathfrak{T})^{\top} = (\emptyset, \emptyset)$ if all models of $\mathfrak{T}$ are almost finitely determined.
\end{proof}

Notice that any $F_{\mathfrak{T}^*}$ satisfying Theorem \ref{thm368} must also satisfy
\begin{equation*}
    |\mathrm{Eval}^V(\mathfrak{T})| = |\mathrm{Eval}^V(F_{\mathfrak{T}^*}(\mathfrak{T}))|
\end{equation*}
for all $\mathfrak{T} \in dom(F_{\mathfrak{T}^*})$. As a corollary, we get a trichotomy for the number of small extensions a $\Pi_2$ TCI can pick out.

\begin{cor}
Let $V$ be a CTM and $\mathfrak{T} \in V$ be a $\Pi_2$ TCI. Then
\begin{enumerate}[label=(\arabic*)]
    \item\label{3701} $\mathrm{Eval}^V(\mathfrak{T}) = \emptyset$ if $\mathfrak{T}$ is inconsistent,
    \item\label{3702} $\mathrm{Eval}^V(\mathfrak{T}) = \{V\}$ if $\mathfrak{T}$ is consistent and all models of $\mathfrak{T}$ are almost finitely determined, and
    \item\label{3703} $|\mathrm{Eval}^V(\mathfrak{T})| = 2^{\aleph_0}$ if $\mathfrak{T}$ is consistent and not all models of $\mathfrak{T}$ are almost finitely determined.
\end{enumerate}
\end{cor}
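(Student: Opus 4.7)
The plan is to handle the three cases in order, using the previously proven results. Case \ref{3701} is essentially the definition of consistency: a TCI is consistent iff it has a model in some outer model of $V$, so if $\mathfrak{T}$ is inconsistent there are no $\mathcal{M}$ witnessing membership in $\mathrm{Eval}^V(\mathfrak{T})$, giving $\mathrm{Eval}^V(\mathfrak{T}) = \emptyset$.

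For case \ref{3702}, I would observe that Lemma \ref{lem365} guarantees every almost finitely determined model of $\mathfrak{T}$ in any outer model of $V$ already lives in $V$. Hence if all models of $\mathfrak{T}$ are almost finitely determined, then any $\mathcal{M}$ witnessing a member of $\mathrm{Eval}^V(\mathfrak{T})$ satisfies $\mathcal{M} \in V$, which forces $V[\mathcal{M}] = V$. Combining with the consistency of $\mathfrak{T}$ (which guarantees at least one such $\mathcal{M}$ exists), we conclude $\mathrm{Eval}^V(\mathfrak{T}) = \{V\}$.

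Case \ref{3703} is the main event, though with the machinery already built it is short. Since $\mathfrak{T}$ is consistent and not all of its models are almost finitely determined, Theorem \ref{thm367} tells us $\mathbb{P}(\mathfrak{T})^{\top}$ is non-empty, while Definition \ref{CB2} and the remark following it guarantee $\mathbb{P}(\mathfrak{T})^{\top}$ is atomless (the iteration was continued precisely until all atoms were stripped away, so $\mathbb{P}(\mathfrak{T})^{\top} = \mathbb{P}(\mathfrak{T})^{(\alpha)} = \mathbb{P}(\mathfrak{T})^{(\alpha+1)}$ forces every atom of $\mathbb{P}(\mathfrak{T})^{\top}$ to have been removed). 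Applying Proposition \ref{prop359} to $\mathbb{P}(\mathfrak{T})^{\top}$ yields $|\mathrm{Eval}^V(\mathfrak{T}(\mathbb{P}(\mathfrak{T})^{\top}))| = 2^{\aleph_0}$. Finally, the witness function $F_{\mathfrak{T}^*}$ from Theorem \ref{thm368} satisfies
\begin{equation*}
    \mathrm{Eval}^V(\mathfrak{T}(\mathbb{P}(\mathfrak{T})^{\top})) = \mathrm{Eval}^V(F_{\mathfrak{T}^*}(\mathfrak{T})) \subset \mathrm{Eval}^V(\mathfrak{T}) \text{,}
\end{equation*}
so $|\mathrm{Eval}^V(\mathfrak{T})| \geq 2^{\aleph_0}$.

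For the matching upper bound, I would observe that $\mathrm{Eval}^V(\mathfrak{T}) \subset \mathbf{M}_S(V)$, and by Proposition \ref{prop35} every element of $\mathbf{M}_S(V)$ is of the form $V[c]$ for some unbounded subset $c$ of a cardinal $\kappa \in V$. Since $V$ is a CTM, $\kappa$ is countable in the meta-theory, and so $|\mathbf{M}_S(V)| \leq 2^{\aleph_0}$; thus $|\mathrm{Eval}^V(\mathfrak{T})| \leq 2^{\aleph_0}$. I do not anticipate a real obstacle here, as each piece is already in place --- the only subtlety is the appeal to the meta-theoretic countability of $V$ to get the upper bound, which sits slightly outside of $V$ but is permitted by the standing meta-theoretic assumption fixed at the start of the paper.
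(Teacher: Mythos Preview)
Your proof is correct and follows essentially the same route as the paper: case~\ref{3701} from the definition of consistency, case~\ref{3702} from Lemma~\ref{lem365}, and case~\ref{3703} from Proposition~\ref{prop359} together with Theorem~\ref{thm368}. You are more explicit than the paper in supplying the upper bound $|\mathrm{Eval}^V(\mathfrak{T})| \leq 2^{\aleph_0}$ via the countability of $V$; the paper leaves this implicit (it is buried in the observation preceding the corollary and in the proof of Proposition~\ref{prop359}), so your added paragraph is a welcome clarification rather than a deviation.
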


\begin{proof}
\ref{3701} follows from the definition of $\mathrm{Eval}^V$ and what it means for a TCI to be (in)consistent. \ref{3702} follows from Lemma \ref{lem365} and \ref{3703} from Proposition \ref{prop359} and Theorem \ref{thm368}.
\end{proof}

\section{References}
\printbibliography[heading=none]

@book {kunen,
    AUTHOR = {Kunen, Kenneth},
     TITLE = {Set theory},
    SERIES = {Studies in Logic (London)},
    VOLUME = {34},
 PUBLISHER = {College Publications, London},
      YEAR = {2011},
     PAGES = {viii+401},
      ISBN = {978-1-84890-050-9},
   MRCLASS = {03-01 (03E05 03E10 03E35 03E40 03E45 03E50)},
  MRNUMBER = {2905394},
MRREVIEWER = {Klaas\ Pieter\ Hart},
}

@article {cohen1,
    AUTHOR = {Cohen, Paul},
     TITLE = {The independence of the continuum hypothesis},
   JOURNAL = {Proc. Nat. Acad. Sci. U.S.A.},
  FJOURNAL = {Proceedings of the National Academy of Sciences of the United
              States of America},
    VOLUME = {50},
      YEAR = {1963},
     PAGES = {1143--1148},
      ISSN = {0027-8424},
   MRCLASS = {02.65},
  MRNUMBER = {157890},
MRREVIEWER = {A.\ Mostowski},
       DOI = {10.1073/pnas.50.6.1143},
       URL = {https://doi.org/10.1073/pnas.50.6.1143},
}

@misc{myself,
      title={Forcing with Language Fragments, Extending Namba Forcing, and Models of Theories with Constraints in Interpretation}, 
      author={Desmond Lau},
      year={2024},
      eprint={2402.01213},
      archivePrefix={arXiv},
      primaryClass={math.LO}
}

@inproceedings{shoenfield,
  title={Unramified forcing},
  author={Shoenfield, Joseph R},
  booktitle={Axiomatic set theory},
  volume={13},
  number={1},
  pages={357--381},
  year={1971},
  organization={AMS Providence, RI}
}

@book{jensencoding,
  title={Coding the universe},
  author={Beller, Aaron and Jensen, Ronald and Jensen, Ronald Bj{\"o}rn and Welch, P},
  volume={47},
  year={1982},
  publisher={Cambridge University Press}
}

@article{woodingen,
  title={The continuum hypothesis, the generic-multiverse of sets, and the Omega conjecture},
  author={Woodin, W Hugh},
  journal={Set theory, arithmetic, and foundations of mathematics: theorems, philosophies},
  volume={36},
  pages={13--42},
  year={2011},
  publisher={Citeseer}
}

@misc{rasm,
      title={Relative Constructibility via Restricted Abstract State Machines}, 
      author={Desmond Lau},
      year={2024},
      eprint={2412.20432},
      archivePrefix={arXiv},
      primaryClass={cs.LO},
      url={https://arxiv.org/abs/2412.20432}, 
}

\end{document}